\def\thanks#1{%
   \footnotemark
   \edef\@tempa{\noexpand\noexpand\noexpand\footnotetext[\the\c@footnote]}%
   \toks@\expandafter{\@thanks}%
   \toks\tw@{{#1}}
   \xdef\@thanks{\the\toks@\@tempa\the\toks\tw@}}
\newtheorem{thm}{Theorem}[section]
\newtheorem{defi}[thm]{Definition}
\newtheorem{lem}[thm]{Lemma}
\newtheorem{prop}[thm]{Proposition}
\newtheorem{cor}[thm]{Corollary}
\newtheorem{rem}[thm]{Remark}
\numberwithin{equation}{section}
\newcommand{\bbN}{\mathbb{N}}
\newcommand{\bbE}{\mathbb{E}}
\newcommand{\bbP}{\mathbb{P}}
\newcommand{\bbT}{\mathbb{T}}
\newcommand{\bbR}{\mathbb{R}}
\newcommand{\bbZ}{\mathbb{Z}}
\newcommand{\mcA}{\mathcal{A}}
\newcommand{\mcD}{\mathcal{D}}
\newcommand{\mcF}{\mathcal{F}}
\newcommand{\mcI}{\mathcal{I}}
\newcommand{\mcJ}{\mathcal{J}}
\newcommand{\mcK}{\mathcal{K}}
\newcommand{\mcL}{\mathcal{L}}
\newcommand{\mcN}{\mathcal{N}}
\newcommand{\mcP}{\mathcal{P}}
\newcommand{\mcR}{\mathcal{R}}
\newcommand{\scM}{\mathscr{M}}
\newcommand{\scT}{\mathscr{T}}
\newcommand{\bfA}{\mathbf{A}}
\newcommand{\bfT}{\mathbf{T}}
\newcommand{\bfG}{\mathbf{G}}
\newcommand{\bfS}{\mathbf{S}}
\newcommand{\wco}{w^*}
\newcommand{\mfs}{\mathfrak{s}}
\newcommand{\id}{\mathop{\text{\rm id}}}
\newcommand{\lp}{\llparenthesis\,}
\newcommand{\rp}{\,\rrparenthesis}
\newcommand{\lb}{\llbracket}
\newcommand{\rb}{\rrbracket}
\newcommand{\tri}{|\!|\!|}
\newcommand{\spa}{\mathop{\text{\rm span}}}
\newcommand{\per}{\text{\rm ad}}
\title{A semigroup approach to the reconstruction theorem and the multilevel Schauder estimate for singular modelled distributions}
\author{Masato Hoshino\thanks{
Graduate School of Engineering Science, Osaka University, 
1-3, Machikaneyama, Toyonaka, Osaka, 560-8531, Japan. 
Email: {\tt hoshino@sigmath.es.osaka-u.ac.jp}
}
and Ryoji Takano\thanks{
Graduate School of Engineering Science, Osaka University, 
1-3, Machikaneyama, Toyonaka, Osaka, 560-8531, Japan. 
Email: {\tt rtakano@sigmath.es.osaka-u.ac.jp}
}}
\date{}							
\begin{document}
\maketitle

\begin{abstract}
We extend the semigroup approach used in \cite{OW19, Ho23} to provide alternative proofs of the reconstruction theorem and the multilevel Schauder estimate for singular modelled distributions. 
As an application of them, we construct the local-in-time solution of the two dimensional parabolic Anderson model with a non-translation invariant differential operator.
\end{abstract}

\section{Introduction}\label{sec:intro}

The theory of regularity structures established by Hairer \cite{Hai14} provides a robust framework adapted to a wide class of (subcritical) singular stochastic PDEs.
One of the most important concepts in this theory is the notion of \emph{modelled distributions}, which are considered as ``generalized Taylor expansions" of the solutions to the underlying equations.
The analytic core of the theory is to prove two key theorems for modelled distributions: the \emph{reconstruction theorem} \cite[Theorem 3.10]{Hai14} and the \emph{multilevel Schauder estimate} \cite[Theorem 5.12]{Hai14}.
The former theorem constructs a global distribution by gluing local distributions derived from a given modelled distribution together.
The latter translates an integral operator such as the convolution operator with Green function into the operator on the space of modelled distributions.
Since Hairer first proved the reconstruction theorem, some alternative proofs have been proposed using various approaches, such as Littlewood--Paley theory \cite{GIP15}, the heat semigroup approach \cite{OW19, BH20}, the mollification approach \cite{ST18}, and the convolution approach \cite{FH20}.
Inspired by \cite{OW19}, the first author of this paper proved both theorems by using the operator semigroup in \cite{Ho23}.
On the other hand, Caravenna and Zambotti \cite{CZ20} introduced the notion of \emph{germs} to describe the analytic core of the proof of the reconstruction theorem, and later, they and Broux \cite{BCZ23} proved the multilevel Schauder estimate at the level of germs.
See also \cite{HL17, DDD19, HR20, LPT21, RS21, BL22, ZK23,  HS23+} for extensions of the theorems into different settings, such as Besov or Triebel--Lizorkin norms, or Riemannian manifolds.
See also \cite{FS22} for a Besov extension of the sewing lemma, which plays a role similar to the reconstruction theorem in rough path theory.

In the aforementioned literatures, modelled distributions are often defined on the entire space $\bbR^d$ to avoid technical difficulties related to boundary conditions.
However, it is not sufficient for applications. 
To apply the theory of regularity structures to parabolic equations, it is necessary to define modelled distributions on the time-space region $(0,\infty)\times\bbR^d$ allowing a singularity at the hyperplane $\{0\}\times\bbR^d$.
This modified version of modelled distributions is called \emph{singular modelled distributions}.
In \cite[Section 6]{Hai14}, the reconstruction theorem and the multilevel Schauder estimate were extended to the class of singular modelled distributions.
An extension to Besov norms is demonstrated in \cite{HL18}, and boundary conditions on both time and space variables are considered in \cite{GH19}.
However, compared to the case of modelled distributions without boundary conditions, there seems to be a less number of studies on alternative proofs and extensions.
It should be mentioned that, in the context of rough path theory, the sewing lemma is extended into the singular path spaces allowing a singularity at time $t=0$ by \cite{BFG}.

The aim of this paper is to extend the semigroup approach used in \cite{Ho23} and provide alternative proofs of the reconstruction theorem (see Corollary \ref{corollary:singularreconstruction}) and the multilevel Schauder estimate (see Corollary \ref{corollary:singularMSE}) for singular modelled distributions.
The proofs use arguments similar to \cite{Ho23}, but require the following technical modifications.
\begin{itemize}
\item[(i)]
Following \cite{Ho23}, we define Besov norms using the operator semigroup $\{Q_t\}_{t>0}$.
The associated integral kernel $Q_t(x,y)$ is inhomogeneous and has restricted regularities with respect to $x$ and $y$ in general.
Hence the equivalence between the norm associated with $\{Q_t\}_{t>0}$ and the standard norm defined from Littlewood--Paley theory is uncertain.
For this reason, we need some nontrivial arguments to prove the uniqueness of the reconstruction.
\item[(ii)]
Since $Q_t$ is an integral operator defined over the entire spacetime, we always require global bounds on models and modelled distributions, unlike the original definitions in \cite{Hai14} that assume only local bounds.
Consequently, in addition to the definition of singular modelled distributions (see Definition \ref{def:besovsMD}) which is closer to the original one, we use a different definition that assumes global bounds (see Proposition \ref{def:variantsnorms}-\ref{def:variantsnorms1}).
For this reason, as for the existence of the reconstruction, we assume a stronger condition ``$\eta-\gamma>-\mfs_1$" for the parameters appearing in the definition of singular modelled distributions than the condition ``$\eta>-\mfs_1$" as in \cite{Hai14}. 
It is not actually a serious problem in applications because we can switch to a small $\gamma$ to apply the reconstruction theorem.
\end{itemize}

Moreover, as an application, we discuss the parabolic Anderson model (PAM)
$$
\big(\partial_1-a(x)\Delta\big)u(t,x)=b\big(u(t,x)\big)\xi(x)
\qquad((t,x)\in(0,\infty)\times\bbT^2)
$$
with a spatial white noise $\xi$.
Here $b:\bbR\to\bbR$ is in the class $C_b^3$ and $a:\bbT^2\to\bbR$ is an $\alpha$-H\"older continuous function for some $\alpha\in(0,1)$ and satisfies
$$
C_1\le a(x)\le C_2\qquad(x\in\bbT^2)
$$
for some constants $0<C_1<C_2$.
When $a$ is a constant, the above equation is one of the simplest examples of subcritical singular stochastic PDEs, as studied in \cite{Hai14, DDD19}.
We show that the equation with general coefficients as above can be renormalized, with the spacetime dependent renormalization function (see Theorem \ref{5:theorem:renorPAM}).
Such ``non-translation invariant" equations are more generally studied by \cite{BB21, Sin23}.
The aim of this paper is to deepen the analytic core of \cite{BB21}, which uses the semigroup approach.
On the other hand, \cite{Sin23} is a direct extension of \cite{Hai14}.
One of the differences between this paper and \cite{Sin23} is in the requirements of the smoothness of coefficients.
In \cite{Sin23}, a bit of smoothness is required, but in this paper the coefficients only need to have positive H\"older continuities.

This paper is organized as follows.
In Section \ref{section:preliminary}, we recall from \cite{Ho23} Besov norms associated with the operator semigroup, and prove important inequalities used throughout this paper.
In Section \ref{section:singular}, we recall the basics of regularity structures and prove the reconstruction theorem for singular modelled distributions.
Section \ref{section:schauder} is devoted to the proof of the multilevel Schauder estimate for singular modelled distributions.
In Section \ref{section:application}, we discuss an application to the two-dimensional PAM.

\

{\bf \noindent Notations}

\

The symbol $\bbN$ denotes the set of all nonnegative integers.
Until Section \ref{section:schauder}, we fix an integer $d\ge1$, the \emph{scaling} $\mfs=(\mfs_1,\dots,\mfs_d)\in[1,\infty)^d$, and a number $\ell>0$. We define $|\mfs|=\sum_{i=1}^d\mfs_i$.
For any multiindex ${\bf k}=(k_i)_{i=1}^d\in\bbN^d$, any $x=(x_i)_{i=1}^d\in\bbR^d$, and any $t>0$, we use the following notations.
\begin{align*}
&{\bf k}!:=\prod_{i=1}^dk_i!,\quad
|{\bf k}|_{\mfs}:=\sum_{i=1}^d\mfs_ik_i,\quad
\|x\|_\mfs:=\sum_{i=1}^d|x_i|^{1/\mfs_i},\\
&x^{\bf k}:=\prod_{i=1}^dx_i^{k_i},\quad
t^{\mfs/\ell}x:=(t^{\mfs_i/\ell}x_i)_{i=1}^d,\quad
t^{-\mfs/\ell}x:=(t^{-\mfs_i/\ell}x_i)_{i=1}^d.
\end{align*}
We define the set $\bbN[\mfs]:=\{|{\bf k}|_\mfs\,;\,{\bf k}\in\bbN^d\}$, which will be used in Section \ref{section:schauder}. The parameter $t$ is not a physical time variable, but an auxiliary variable used to define regularities of distributions.
For multiindices ${\bf k}=(k_i)_{i=1}^d$ and ${\bf l}=(l_i)_{i=1}^d$, we write ${\bf l}\le{\bf k}$ if $l_i\le k_i$ for any $1\le i\le d$, and then define $\binom{{\bf k}}{{\bf l}}:=\prod_{i=1}^d\binom{k_i}{l_i}$.

We use the notation $A\lesssim B$ for two functions $A(x)$ and $B(x)$ of a variable $x$, if there exists a constant $c>0$ independent of $x$ such that $A(x)\le cB(x)$ for any $x$.

\section{Preliminaries}\label{section:preliminary}

In this section, we introduce some function spaces and prove important inequalities used throughout this paper.
Until Section \ref{section:schauder}, we fix a nonnegative measurable function $G:\bbR^d\to\bbR$
and define for any $t>0$,
$$
G_t(x)=t^{-|\mfs|/\ell}G\big(t^{-\mfs/\ell}x\big).
$$

\subsection{Weighted Besov space}

In this subsection, we recall from \cite{Ho23} some basics of Besov norms associated with the operator semigroup. 
For simplicity, we consider only $L^\infty$ type norms.

\begin{defi}
A continuous function $w:\bbR^d\to[0,1]$ which is strictly positive outside a set of Lebesgue measure $0$ is called a \emph{weight}.
For any weight $w$, we define the weighted $L^\infty$ norm of a measurable function $f:\bbR^d\to\bbR$ by
\begin{align*}
\|f\|_{L^\infty(w)}:=\|fw\|_{L^\infty(\bbR^d)}.
\end{align*}
We denote by $L^\infty(w)$ the space of all measurable functions with finite $L^\infty(w)$ norms, and define $C(w)=C(\bbR^d)\cap L^\infty(w)$.
\end{defi}

While we assumed that $w(x)>0$ for every $x\in\bbR^d$ in \cite{Ho23}, we impose a weaker condition to consider a weight vanishing on the hyperplane $\{0\}\times\bbR^{d-1}$ in next subsection.
Note that $\|\cdot\|_{L^\infty(w)}$ is nondegenerate because $w(x)>0$ for almost every $x\in\bbR^d$.
If $w(x)>0$ for any $x\in\bbR^d$, then $C(w)$ is a closed subspace of $L^\infty(w)$.


\begin{defi}\label{defweight}
A weight $w$ is said to be \emph{$G$-controlled} if $w(x)>0$ for any $x\in\bbR^d$ and there exists a continuous function $\wco:\bbR^d\to[1,\infty)$ such that
\begin{align}\label{weightmoderate}
w(x+y)\le \wco(x)w(y)
\end{align}
for any $x,y\in\bbR^d$ and
\begin{align}\label{weightintegrable}
\sup_{0<t\le T}\sup_{x\in\bbR^d}\Big\{\|x\|_\mfs^n\,\wco\big(t^{\mfs/\ell}x\big)G(x)\Big\}<\infty
\end{align}
for any $n\ge0$ and $T>0$.
\end{defi}

From the properties \eqref{weightmoderate} and \eqref{weightintegrable}, we have that
\begin{align}\label{Gtfboundedbyf}
\|G_t*f\|_{L^\infty(w)}\lesssim\|f\|_{L^\infty(w)}
\end{align}
uniformly over $f\in L^\infty(w)$ and $t\in(0,T]$ for any $T>0$.
This is a particular case of \cite[Lemma 2.4]{Ho23}.
Next we introduce a semigroup of integral operators.

\begin{defi}\label{asmp1}
We call a family of continuous functions $\{Q_t:\bbR^d\times\bbR^d\to\bbR\}_{t>0}$ a \emph{$G$-type semigroup} if it satisfies the following properties.
\begin{enumerate}
\renewcommand{\theenumi}{(\roman{enumi})}
\renewcommand{\labelenumi}{(\roman{enumi})}
\item\label{asmp1:semigroup}
(Semigroup property)
For any $0<s<t$ and $x,y\in\bbR^d$,
$$
\int_{\bbR^d}Q_{t-s}(x,z)Q_s(z,y)dz=Q_t(x,y).
$$
\item\label{asmp1:conservative}
(Conservativity)
For any $x\in\bbR^d$,
$$
\lim_{t\downarrow0}\int_{\bbR^d}Q_t(x,y)dy=1.
$$
\item\label{asmp1:gauss}
(Upper $G$-type estimate)
There exists a constant $C_1>0$ such that, for any $t>0$ and $x,y\in\bbR^d$,
$$
|Q_t(x,y)|\le C_1G_t(x-y).
$$
\item\label{asmp2:derivative}
(Time derivative)
For any $x,y\in\bbR^d$, $Q_t(x,y)$ is differentiable with respect to $t$. Moreover, there exists a constant $C_2>0$ such that, for any $t>0$ and $x,y\in\bbR^d$,
$$
|\partial_tQ_t(x,y)|\le C_2\, t^{-1}G_t(x-y).
$$
\end{enumerate}
\end{defi}

We fix a $G$-type semigroup $\{Q_t\}_{t>0}$ until Section \ref{section:schauder}.
If $w$ is a $G$-controlled weight, the linear operator on $L^\infty(w)$ defined by
\begin{align*}
(Q_tf)(x):=:Q_t(x,f):=\int_{\bbR^d}Q_t(x,y)f(y)dy\qquad (f\in L^\infty(w),\ x\in\bbR^d)
\end{align*}
is bounded in $L^\infty(w)$ uniformly over $t\in(0,1]$, by Definition \ref{asmp1}-\ref{asmp1:gauss} and the inequality \eqref{Gtfboundedbyf}. 
As an important fact, $Q_tf$ is a continuous function for any $f\in L^\infty(w)$ and $t>0$.
Moreover, if $f\in C(w)$, we have
\begin{align}\label{Qtfconvergestof}
\lim_{t\downarrow0}(Q_tf)(x)=f(x)
\end{align}
for any $x\in\bbR^d$. See \cite[Proposition 2.8]{Ho23} for the proofs.

\begin{defi}\label{def:BesovassociatedQ}
Let $w$ be a $G$-controlled weight and let $\{Q_t\}_{t>0}$ be a $G$-type semigroup.
For every $\alpha\le0$, we define the Besov space $C^{\alpha,Q}(w)$ as the completion of $C(w)$ under the norm
$$
\|f\|_{C^{\alpha,Q}(w)}
:=\sup_{0<t\le1}t^{-\alpha/\ell}\|Q_t f\|_{L^\infty(w)}.
$$
\end{defi}

By the property \eqref{Qtfconvergestof}, the norm $\|\cdot\|_{C^{\alpha,Q}(w)}$ is nondegenerate on $C(w)$.
When $\mfs=(1,1,\dots,1)$, $\ell=2$, and $Q_t$ is the heat semigroup $e^{t\Delta}$, the above norm (with $\alpha<0$ and $w=1$) is equivalent to the classical Besov norm in Euclidean setting, see e.g., \cite[Theorem 2.34]{BCD}. For more general semigroups, a similar equivalence is obtained by \cite[Theorem 5.1]{BDY} when the adjoint operator of $Q_t$ also satisfies some conditions similar to those in Definition \ref{asmp1}. As far as the authors know, without such an additional assumption for the semigroup, it is unclear whether the equivalence holds even for the case of isotropic scaling and no weight.

\begin{rem}\label{remark:Qtextension}
As stated in \cite[Proposition 2.14]{Ho23}, for any $\alpha_1<\alpha_2\le0$, the identity $\iota_{\alpha_1}:C(w)\hookrightarrow C^{\alpha_1,Q}(w)$ is uniquely extended to the continuous injection 
$$
\iota_{\alpha_1}^{\alpha_2}:C^{\alpha_2,Q}(w)\hookrightarrow C^{\alpha_1,Q}(w).
$$
Moreover, for any $\alpha\le0$, the operator $Q_t:C(w)\to C(w)$ is continuously extended to the operator $Q_t^\alpha:C^{\alpha,Q}(w)\to C(w)$ and they satisfy the relation
$$
Q_t^{\alpha_1}\circ\iota_{\alpha_1}^{\alpha_2}=Q_t^{\alpha_2}
$$
for any $\alpha_1<\alpha_2\le0$. For this compatibility, we can omit the letter $\alpha$ and use the notation $Q_t$ to mean its extension $Q_t^\alpha$ regardless of its domain.
\end{rem}

\subsection{Temporal weights}\label{2:sec:timeweight}

In what follows, the first variable $x_1$ in $x=(x_1, x_2, \dots, x_d)\in\bbR^d$ is regarded as the temporal variable, and the others $(x_2, \dots, x_d)$ are spatial variables, denoted by $x' = (x_2, \dots, x_d)$.
Accordingly, we denote $\mfs'=(\mfs_2,\dots,\mfs_d)$.
The aim of this paper is to extend the results in \cite{Ho23} to norms allowing a singularity at the hyperplane $\{0\}\times\bbR^{d-1}$.
We define the weight $\varphi :\bbR^d\to[0,1]$ by
$$
\varphi (x):=|x_1|^{1/\mfs_1}\wedge1
$$
and set $\varphi (x,y):=\varphi (x) \wedge \varphi (y)$.
The following inequalities are used frequently throughout this paper.

\begin{lem}\label{lemma:molifysingularity}
Let $w$ be a $G$-controlled weight.
For any $\alpha\ge0$ and $\beta\in[0,\mfs_1)$, there exists a constant $C$ such that, for any $t\in(0,1]$ and $x\in\bbR^d$ we have
\begin{align*}
\int_{\bbR^d}\varphi (y)^{-\beta} \|x-y\|_{\mfs}^\alpha\, w^\ast (x-y) G_t(x-y)dy
\le C t^{\alpha/\ell} \big\{\varphi (x)^{-\beta}\wedge t^{-\beta/\ell}\big\}
\end{align*}
and
\begin{align*}
\int_{\bbR^d}\varphi (x,y)^{-\beta} \|x-y\|_{\mfs}^\alpha\, w^\ast (x-y) G_t(x-y)dy
\le C t^{\alpha/\ell} \varphi (x)^{-\beta}.
\end{align*}
\end{lem}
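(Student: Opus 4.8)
\emph{Proof strategy.} The plan is to reduce both inequalities to a one-dimensional estimate, exploiting that $\varphi$ depends only on the temporal coordinate $x_1$. For the first inequality, substitute $y=x-t^{\mfs/\ell}u$; since then $G_t(x-y)\,dy=G(u)\,du$, $\|x-y\|_\mfs=t^{1/\ell}\|u\|_\mfs$ and $w^\ast(x-y)=w^\ast(t^{\mfs/\ell}u)$, the left-hand side equals
\[
t^{\alpha/\ell}\int_{\bbR^d}\varphi\big(x-t^{\mfs/\ell}u\big)^{-\beta}\,\|u\|_\mfs^\alpha\,w^\ast\big(t^{\mfs/\ell}u\big)G(u)\,du.
\]
By \eqref{weightintegrable} with $T=1$, at arbitrarily large order, for every $N\ge0$ there is $C_N$ with $\|u\|_\mfs^\alpha w^\ast(t^{\mfs/\ell}u)G(u)\le C_N(1+\|u\|_\mfs)^{-N}$ for all $u\in\bbR^d$ and $t\in(0,1]$ (the assumption $\alpha\ge0$ handles $\|u\|_\mfs\le1$). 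Writing $u=(u_1,u')$, using $(1+\|u\|_\mfs)^{-N}\le(1+|u_1|^{1/\mfs_1})^{-M}(1+\|u'\|_{\mfs'})^{-(N-M)}$ and integrating the last factor over $u'\in\bbR^{d-1}$ (finite once $N-M>|\mfs|-\mfs_1$), and since $\varphi(x-t^{\mfs/\ell}u)=|x_1-t^{\mfs_1/\ell}u_1|^{1/\mfs_1}\wedge1$ involves only $u_1$, the first inequality reduces, on writing $a:=x_1$, $\tau:=t^{\mfs_1/\ell}\in(0,1]$ and $\gamma:=\beta/\mfs_1\in[0,1)$ (so $\tau^{-\gamma}=t^{-\beta/\ell}$ and $\varphi(x)^{-\beta}=|a|^{-\gamma}\vee1$), to the uniform bound
\[
\int_{\bbR}\big(|a-\tau v|^{-\gamma}\vee1\big)(1+|v|^{1/\mfs_1})^{-M}\,dv\lesssim\big(|a|^{-\gamma}\vee1\big)\wedge\tau^{-\gamma}\qquad(a\in\bbR,\ \tau\in(0,1]),
\]
valid for $M$ large enough (it suffices that $M>\mfs_1$ and then $N>M+|\mfs|-\mfs_1$).

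To prove this one-dimensional bound, estimate $|a-\tau v|^{-\gamma}\vee1\le1+|a-\tau v|^{-\gamma}$: the summand $1$ contributes a finite constant (as $M>\mfs_1$), which is admissible because both $|a|^{-\gamma}\vee1$ and $\tau^{-\gamma}$ are $\ge1$. For the main term $J:=\int_\bbR|a-\tau v|^{-\gamma}(1+|v|^{1/\mfs_1})^{-M}\,dv$ we would show $J\lesssim\tau^{-\gamma}$ and $J\lesssim|a|^{-\gamma}\vee1$ separately, the minimum of the two being the claimed bound. Both use $\int_{|\xi|\le R}|a-\xi|^{-\gamma}\,d\xi\le\frac{2}{1-\gamma}R^{1-\gamma}$, which is where $\gamma<1$, i.e.\ $\beta<\mfs_1$, enters. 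For $J\lesssim\tau^{-\gamma}$: decompose $\bbR$ dyadically in $v$ (namely $\{|v|\le1\}$ and $\{2^k<|v|\le2^{k+1}\}$, $k\ge0$), use $(1+|v|^{1/\mfs_1})^{-M}\lesssim2^{-kM/\mfs_1}$ on the $k$-th piece together with $\int_{|v|\le2^{k+1}}|a-\tau v|^{-\gamma}\,dv=\tau^{-1}\int_{|\xi|\le2^{k+1}\tau}|a-\xi|^{-\gamma}\,d\xi\lesssim2^{k(1-\gamma)}\tau^{-\gamma}$, and sum the geometric series (convergent once $M>\mfs_1(1-\gamma)$). For $J\lesssim|a|^{-\gamma}\vee1$: if $\tau>|a|$ then $|a|<1$ (as $\tau\le1$) and the bound just obtained gives $J\lesssim\tau^{-\gamma}\le|a|^{-\gamma}$; if $\tau\le|a|$, split $J$ at $|\tau v|=|a|/2$, using $|a-\tau v|\ge|a|/2$ on $\{|\tau v|\le|a|/2\}$ (contribution $\lesssim|a|^{-\gamma}$) and $(1+|v|^{1/\mfs_1})^{-M}\le|v|^{-M/\mfs_1}$ with the substitutions $\xi=\tau v$, then $\xi=|a|\sigma$, on $\{|\tau v|>|a|/2\}$ (contribution $\lesssim(\tau/|a|)^{M/\mfs_1-1}|a|^{-\gamma}\lesssim|a|^{-\gamma}$, once $M>\mfs_1$). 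This settles the first inequality.

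For the second inequality, use $\varphi(x,y)^{-\beta}=\varphi(x)^{-\beta}\vee\varphi(y)^{-\beta}\le\varphi(x)^{-\beta}+\varphi(y)^{-\beta}$ to split the integral into two parts. The part carrying $\varphi(y)^{-\beta}$ is exactly the left-hand side of the first inequality, hence $\lesssim t^{\alpha/\ell}\{\varphi(x)^{-\beta}\wedge t^{-\beta/\ell}\}\le t^{\alpha/\ell}\varphi(x)^{-\beta}$; the part carrying $\varphi(x)^{-\beta}$ equals $\varphi(x)^{-\beta}\int_{\bbR^d}\|x-y\|_\mfs^\alpha w^\ast(x-y)G_t(x-y)\,dy\lesssim t^{\alpha/\ell}\varphi(x)^{-\beta}$ by the same change of variables and decay bound (the case $\beta=0$). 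Adding the two gives the claim. The only genuinely delicate point in the whole argument is securing the \emph{minimum} $\varphi(x)^{-\beta}\wedge t^{-\beta/\ell}$ in the first inequality, rather than just one of the two competing quantities; this is what forces the two separate estimates of $J$ and the case distinction $\tau>|x_1|$ versus $\tau\le|x_1|$. The remainder is the routine scaling change of variables together with the rapid decay of $w^\ast G$ built into Definition \ref{defweight}.
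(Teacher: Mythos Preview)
Your argument is correct. The second inequality is handled identically to the paper, via $\varphi(x,y)^{-\beta}\le\varphi(x)^{-\beta}+\varphi(y)^{-\beta}$. For the first inequality, however, your route differs from the paper's in both halves of the minimum.

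For the bound $Ct^{(\alpha-\beta)/\ell}$, the paper works in the original coordinates and splits the $y$-integral at the threshold $|y_1|^{1/\mfs_1}=t^{1/\ell}$: on the far region $\varphi(y)^{-\beta}\le t^{-\beta/\ell}$ is immediate, while on the near region the temporal and spatial variables are separated and the $y_1$-integral $\int_{|y_1|^{1/\mfs_1}\le t^{1/\ell}}|y_1|^{-\beta/\mfs_1}dy_1$ is computed directly. You instead rescale first, reduce to one dimension via the decay of $w^\ast G$, and use a dyadic decomposition in $v$; this is more systematic but also more laborious. For the bound $Ct^{\alpha/\ell}\varphi(x)^{-\beta}$, the paper uses a short trick: multiply through by $\varphi(x)^\beta$, invoke the elementary inequality $\varphi(x)^\beta\lesssim\|x-y\|_\mfs^\beta+\varphi(y)^\beta$, and reduce to two instances of the bound already obtained (with exponent $\alpha+\beta$ in one term). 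You instead argue directly on the one-dimensional integral $J$ with a case split $\tau>|a|$ versus $\tau\le|a|$ and a further near/far split in the second case. Both approaches are standard; the paper's is noticeably shorter because the multiplication trick recycles the first estimate, whereas your method treats the two bounds independently. Your approach has the minor advantage of making the role of the condition $\beta<\mfs_1$ (i.e.\ $\gamma<1$) fully explicit at a single point, namely the local integrability $\int_{|\xi|\le R}|a-\xi|^{-\gamma}d\xi\lesssim R^{1-\gamma}$.
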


\begin{proof}
The second inequality immediately follows from the first one because of the trivial inequality $\varphi (x,y)^{-\beta}\le\varphi (x)^{-\beta}+\varphi (y)^{-\beta}$.
Hence we focus on the first inequality. To obtain the bound $Ct^{(\alpha-\beta)/\ell}$, we divide the integral into two parts.
In the region $\{|y_1|^{1/\mfs_1}> t^{1/\ell}\}$, since $\varphi (y)^{-\beta}\le t^{-\beta/\ell}$ we have
\begin{align*}
    & \int_{|y_1|^{1/\mfs_1}> t^{1/\ell}}\varphi (y)^{-\beta}\|x-y\|_{\mfs}^\alpha\, w^\ast (x-y) G_t(x-y)dy\\
    & \le t^{-\beta/\ell} \int_{\bbR^d} \|z\|_{\mfs}^\alpha\, w^\ast (z) G_t(z)dz\\
    & \le t^{(\alpha-\beta)/\ell} \int_{\bbR^d} \|z\|_{\mfs}^\alpha\, w^\ast \big(t^{\mfs/\ell}z\big) G(z)dz
    \lesssim t^{(\alpha-\beta)/\ell}.
\end{align*}
In the region $\{|y_1|^{1/\mfs_1}\le t^{1/\ell}\}$, by treating the temporal variable and spatial variables separately, we have
\begin{align*}
    & \int_{|y_1|^{1/\mfs_1} \le t^{1/\ell}} \varphi (y)^{-\beta} \|x-y\|_{\mfs}^\alpha\, w^\ast (x-y) G_t(x-y)dy \\
    & \le \bigg(\int_{|y_1|^{1/\mfs_1}\le t^{1/\ell}} |y_1|^{-\beta/\mfs_1} dy_1 \bigg) 
    \bigg(\int_{\bbR^{d-1}} \sup_{z_1 \in \bbR} \|(z_1,z')\|^\alpha_{\mfs}\, w^\ast (z_1,z') G_t(z_1,z') dz' \bigg)\\
    & \lesssim (t^{\mfs_1/\ell})^{1-\beta/\mfs_1}\\
     & \hspace{35pt} \times  \bigg(t^{-\mfs_1/\ell}\int_{\bbR^{d-1}} \sup_{z_1 \in \bbR} \|(t^{\mfs_1/\ell}z_1,t^{\mfs'/\ell}z')\|^\alpha_{\mfs}\, w^\ast (t^{\mfs_1/\ell}z_1,t^{\mfs'/\ell}z') G(z_1,z') dz' \bigg)\\
    & = (t^{\mfs_1/\ell})^{1-\beta/\mfs_1}
    \bigg(t^{-\mfs_1/\ell+\alpha/\ell}\int_{\bbR^{d-1}} \sup_{z_1 \in \bbR} \|(z_1,z')\|^\alpha_{\mfs}\, w^\ast (t^{\mfs_1/\ell}z_1,t^{\mfs'/\ell}z') G(z_1,z') dz' \bigg)\\
    & \lesssim t^{(\alpha-\beta)/\ell}.
\end{align*}
Therefore, we obtain the upper bound $Ct^{(\alpha-\beta)/\ell}$.
Moreover, by decomposing
\begin{align}\label{ineq:omegashift}
\varphi (x)^{\beta}\lesssim|x_1-y_1|^{\beta/\mfs_1}+\varphi (y)^{\beta}
\lesssim\|x-y\|_\mfs^\beta+\varphi (y)^{\beta},
\end{align}
we have
\begin{align*}
&\varphi (x)^{\beta}\int_{\bbR^d}\varphi (y)^{-\beta} \|x-y\|_{\mfs}^\alpha\, w^\ast (x-y) G_t(x-y) dy\\
&\lesssim\int_{\bbR^d}\{\varphi (y)^{-\beta} \|x-y\|_{\mfs}^{\alpha+\beta}+\|x-y\|_\mfs^\alpha\}\, w^\ast (x-y) G_t(x-y) dy\\
&\lesssim t^{\alpha/\ell}.
\end{align*}
This yields another bound $Ct^{\alpha/\ell}\varphi (x)^{-\beta}$.
\end{proof}

From the above lemma, we obtain an inequality similar to \eqref{Gtfboundedbyf}.

\begin{cor}\label{proposition:Gtboundedtildew}
Let $w$ be a $G$-controlled weight.
For any $\beta\in[0,\mfs_1)$, there exists a constant $C$ such that, for any $f\in L^\infty(\varphi ^\beta w)$ we have
$$
\sup_{0<t\le1}\|G_t*f\|_{L^\infty(\varphi ^\beta w)}
+\sup_{0<t\le1}t^{\beta/\ell}\|G_t*f\|_{L^\infty(w)}
\le C \|f\|_{L^\infty(\varphi ^\beta w)}.
$$
\end{cor}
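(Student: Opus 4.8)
The plan is to deduce both estimates directly from Lemma~\ref{lemma:molifysingularity} with $\alpha=0$, using the moderateness property \eqref{weightmoderate} of the $G$-controlled weight $w$. First I would unfold the definitions: for $f\in L^\infty(\varphi^\beta w)$ and $y\in\bbR^d$ one has $|f(y)|\le\|f\|_{L^\infty(\varphi^\beta w)}\,\varphi(y)^{-\beta}w(y)^{-1}$, so that
$$
|(G_t*f)(x)|\le\|f\|_{L^\infty(\varphi^\beta w)}\int_{\bbR^d}G_t(x-y)\,\varphi(y)^{-\beta}\,w(y)^{-1}\,dy .
$$
The point is then to reinsert the weight evaluated at $x$: by \eqref{weightmoderate} applied with the shift $x-y$ we have $w(x)\le w^\ast(x-y)\,w(y)$, hence $w(x)w(y)^{-1}\le w^\ast(x-y)$, and therefore
$$
w(x)\,|(G_t*f)(x)|\le\|f\|_{L^\infty(\varphi^\beta w)}\int_{\bbR^d}\varphi(y)^{-\beta}\,w^\ast(x-y)\,G_t(x-y)\,dy .
$$
The integral on the right is exactly the one estimated by the first inequality of Lemma~\ref{lemma:molifysingularity} (with $\alpha=0$), so it is bounded by $C\{\varphi(x)^{-\beta}\wedge t^{-\beta/\ell}\}$; in particular this also shows that $G_t*f$ is well defined pointwise.

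From this single bound both terms of the corollary follow. For the $L^\infty(\varphi^\beta w)$ term I multiply by $\varphi(x)^\beta$ and use $\varphi(x)^\beta\{\varphi(x)^{-\beta}\wedge t^{-\beta/\ell}\}\le1$, which gives $\varphi(x)^\beta w(x)\,|(G_t*f)(x)|\le C\|f\|_{L^\infty(\varphi^\beta w)}$ uniformly in $x\in\bbR^d$ and $t\in(0,1]$. For the $L^\infty(w)$ term I instead discard the minimum in favour of $t^{-\beta/\ell}$, so that $w(x)\,|(G_t*f)(x)|\le C\,t^{-\beta/\ell}\|f\|_{L^\infty(\varphi^\beta w)}$, and multiplying by $t^{\beta/\ell}$ cancels the singular factor. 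Taking suprema over $x$ and over $t\in(0,1]$ in each case and adding the two estimates yields the claim.

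There is no real obstacle here: the entire content is the reduction to Lemma~\ref{lemma:molifysingularity}. The only step requiring a moment's care is the order of operations—one must introduce the factor $w^\ast(x-y)$ via \eqref{weightmoderate} \emph{before} invoking the lemma, since the lemma is stated for an integrand containing $w^\ast(x-y)$ rather than $w(y)^{-1}$. As a consistency check, the case $\beta=0$ simply reproduces the inequality \eqref{Gtfboundedbyf}.
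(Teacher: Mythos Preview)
Your proof is correct and follows essentially the same route as the paper: both arguments insert $w(x)\le w^\ast(x-y)w(y)$ inside the convolution, bound $\varphi(y)^\beta w(y)|f(y)|$ by $\|f\|_{L^\infty(\varphi^\beta w)}$, and then invoke Lemma~\ref{lemma:molifysingularity} with $\alpha=0$ to obtain the pointwise bound $C\{\varphi(x)^{-\beta}\wedge t^{-\beta/\ell}\}\|f\|_{L^\infty(\varphi^\beta w)}$. The paper's proof is simply more terse, leaving the extraction of the two norm bounds from the minimum implicit where you spell it out.
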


\begin{proof}
By Lemma \ref{lemma:molifysingularity}, we have
\begin{align*}
w(x)|(G_t*f)(x)|
&\le\int_{\bbR^d}\varphi (y)^{-\beta}w^*(x-y)G_t(x-y)
\varphi (y)^\beta w(y)|f(y)|dy\\
&\le C \big\{\varphi (x)^{-\beta}\wedge t^{-\beta/\ell}\big\}\|f\|_{L^\infty(\varphi ^\beta w)}.
\end{align*}
\end{proof}

We obtain the following assertions by arguments similar to \cite{Ho23}.

\begin{prop}\label{proposition:tildew:importantfacts}
Let $w$ be a $G$-controlled weight and let $\{Q_t\}_{t>0}$ be a $G$-type semigroup.
We consider the weight $\tilde{w}:=\varphi ^\beta w$ for any fixed $\beta\in[0,\mfs_1)$.
\begin{enumerate}
\renewcommand{\theenumi}{(\roman{enumi})}
\renewcommand{\labelenumi}{(\roman{enumi})}
\item\label{proposition:tildew:importantfacts:Qtregularize}
For any $f\in L^\infty(\tilde{w})$ and $t>0$, the function $Q_tf$ belongs to $C(w)$.
\item\label{proposition:tildew:importantfacts:defofnorm}
For any $\alpha\le0$, the Besov norm
$$
\|f\|_{C^{\alpha,Q}(\tilde{w})}
:=\sup_{0<t\le1}t^{-\alpha/\ell}\|Q_t f\|_{L^\infty(\tilde{w})}
$$
is nondegenerate on $C(\tilde{w})$, so we can define $C^{\alpha,Q}(\tilde{w})$ as the completion of $C(\tilde{w})$ under this norm.
\item\label{proposition:tildew:importantfacts:embedding}
For any $\alpha_1<\alpha_2\le0$, the identity $\tilde{\iota}_{\alpha_1}:C(\tilde{w})\hookrightarrow C^{\alpha_1,Q}(\tilde{w})$ is uniquely extended to the continuous injection $\tilde{\iota}_{\alpha_1}^{\alpha_2}:C^{\alpha_2,Q}(\tilde{w})\hookrightarrow C^{\alpha_1,Q}(\tilde{w})$.
For any $\alpha\le0$, the operator $Q_t:C(\tilde{w})\to C(\tilde{w})$ is continuously extended to the operator $\tilde{Q}_t^\alpha:C^{\alpha,Q}(\tilde{w})\to \overline{C(\tilde{w})}$, where $\overline{C(\tilde{w})}$ is the closure of $C(\tilde{w})$ under the norm $\|\cdot\|_{L^\infty(\tilde{w})}$.
Moreover, they satisfy $\tilde{Q}_t^{\alpha_1}\circ\tilde{\iota}_{\alpha_1}^{\alpha_2}=\tilde{Q}_t^{\alpha_2}$ for any $\alpha_1<\alpha_2\le0$.
\item\label{proposition:tildew:importantfacts:embeddingwtildew}
For any $\alpha\le0$, the identity $i:C(w)\hookrightarrow C(\tilde{w})$ is uniquely extended to the continuous injection $i_\alpha:C^{\alpha,Q}(w)\hookrightarrow C^{\alpha,Q}(\tilde{w})$.
Moreover, the extensions $\tilde{Q}_t^\alpha:C^{\alpha,Q}(\tilde{w})\to \overline{C(\tilde{w})}$ and $Q_t^\alpha:C^{\alpha,Q}(w)\to C(w)$ defined in \ref{proposition:tildew:importantfacts:embedding} and Remark \ref{remark:Qtextension} satisfy the relation
$$
i\circ Q_t^\alpha=\tilde{Q}_t^\alpha\circ i_\alpha.
$$
Consequently, we can use the same notation $Q_t$ to denote both $Q_t^\alpha$ and $\tilde{Q}_t^\alpha$.
\item\label{proposition:tildew:importantfacts:contit}
For any $\alpha\le0$, there exists a constant $C>0$ such that, for any $f\in C^{\alpha,Q}(\tilde{w})$, $t\in(0,1]$, and $\varepsilon\in[0,\ell]$, we have
$$
\|(Q_t-\id)f\|_{C^{\alpha-\varepsilon,Q}(\tilde{w})}
\le C\, t^{\varepsilon/\ell}\|f\|_{C^{\alpha,Q}(\tilde{w})}.
$$
\end{enumerate}
\end{prop}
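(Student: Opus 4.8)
The plan is to follow, essentially line by line, the corresponding results of \cite[Section~2]{Ho23}, the only systematic change being that every appeal there to the unweighted bound \eqref{Gtfboundedbyf} gets replaced by its $\varphi^\beta$-weighted refinement, Lemma~\ref{lemma:molifysingularity} or Corollary~\ref{proposition:Gtboundedtildew}. In each item the two structural facts doing the work are the upper $G$-type estimate (Definition~\ref{asmp1}-\ref{asmp1:gauss}), together with its time-derivative counterpart \ref{asmp2:derivative}, and the moderateness \eqref{weightmoderate}: these reduce every bound on $Q_t$ or $\partial_tQ_t$ to a $\varphi$-weighted convolution integral of exactly the shape handled by Lemma~\ref{lemma:molifysingularity}.

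For \ref{proposition:tildew:importantfacts:Qtregularize} I would estimate, via \eqref{weightmoderate} and Definition~\ref{asmp1}-\ref{asmp1:gauss},
\[
w(x)\,|(Q_tf)(x)|\le C_1\int_{\bbR^d}w^\ast(x-y)\,G_t(x-y)\,\varphi(y)^{-\beta}\big(\varphi(y)^\beta w(y)|f(y)|\big)\,dy,
\]
and bound the integral by Lemma~\ref{lemma:molifysingularity} with $\alpha=0$; this simultaneously shows absolute convergence of the integral defining $Q_tf$ (here $\beta<\mfs_1$ enters, through the lemma) and gives $Q_tf\in L^\infty(w)$ with $\|Q_tf\|_{L^\infty(w)}\lesssim t^{-\beta/\ell}\|f\|_{L^\infty(\tilde w)}$, while continuity of $Q_tf$ follows from the joint continuity of $(x,y)\mapsto Q_t(x,y)$ and dominated convergence, with a dominating function that is locally integrable precisely because $\varphi^{-\beta}$ is --- again $\beta<\mfs_1$ --- just as in \cite[Proposition~2.8]{Ho23}. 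For \ref{proposition:tildew:importantfacts:defofnorm}: since $Q_tf\in C(w)$ and $\tilde w$ is strictly positive off the measure-zero hyperplane $\{0\}\times\bbR^{d-1}$, nondegeneracy of $\|\cdot\|_{C^{\alpha,Q}(\tilde w)}$ on $C(\tilde w)$ reduces to proving $(Q_tf)(x)\to f(x)$ as $t\downarrow0$ for $f\in C(\tilde w)$; I would write $(Q_tf)(x)-f(x)=\int Q_t(x,y)(f(y)-f(x))\,dy+f(x)(\int Q_t(x,y)\,dy-1)$, kill the second term by conservativity (Definition~\ref{asmp1}-\ref{asmp1:conservative}), split the first at $\|x-y\|_\mfs=\delta$, use uniform continuity of $f$ on the near region, and on the far region insert the factor $\delta^{-\alpha}\|x-y\|_\mfs^\alpha$ with a fixed $\alpha\in(\beta,\mfs_1)$ and apply Lemma~\ref{lemma:molifysingularity} to obtain a contribution $\lesssim\delta^{-\alpha}t^{(\alpha-\beta)/\ell}\|f\|_{L^\infty(\tilde w)}\to0$.

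Parts \ref{proposition:tildew:importantfacts:embedding} and \ref{proposition:tildew:importantfacts:embeddingwtildew} are largely soft. The embeddings $\tilde\iota_{\alpha_1}^{\alpha_2}$ and $i_\alpha$ come from the pointwise inequalities $t^{-\alpha_1/\ell}\le t^{-\alpha_2/\ell}$ on $(0,1]$ and $\|\cdot\|_{L^\infty(\tilde w)}\le\|\cdot\|_{L^\infty(w)}$ (because $\tilde w=\varphi^\beta w\le w$); the extension $\tilde Q_t^\alpha$ is defined from $C(\tilde w)$ using that $(Q_tf_n)$ is Cauchy in $L^\infty(\tilde w)$ whenever $f_n$ is Cauchy in $C^{\alpha,Q}(\tilde w)$ and that each $Q_tf_n\in C(w)\subset C(\tilde w)$ by \ref{proposition:tildew:importantfacts:Qtregularize}, and all the compatibility relations $\tilde Q_t^{\alpha_1}\circ\tilde\iota_{\alpha_1}^{\alpha_2}=\tilde Q_t^{\alpha_2}$ and $i\circ Q_t^\alpha=\tilde Q_t^\alpha\circ i_\alpha$ then follow by density from the trivial identities on $C(\tilde w)$, resp.\ $C(w)$ --- once one also records that $Q_t$ is bounded on $L^\infty(\tilde w)$ uniformly over $t\in(0,1]$, which is a consequence of \eqref{ineq:omegashift} and Lemma~\ref{lemma:molifysingularity}. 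Injectivity of $\tilde\iota_{\alpha_1}^{\alpha_2}$ (and of $i_\alpha$) I would deduce from the isometry formula $\|\xi\|_{C^{\alpha,Q}(\tilde w)}=\sup_{0<t\le1}t^{-\alpha/\ell}\|\tilde Q_t^\alpha\xi\|_{L^\infty(\tilde w)}$, which holds because a sequence Cauchy in $C^{\alpha,Q}(\tilde w)$ makes $t\mapsto t^{-\alpha/\ell}\|Q_tf_n\|_{L^\infty(\tilde w)}$ converge uniformly on $(0,1]$, so the suprema pass to the limit: if $\xi$ maps to $0$ in the weaker norm then $\tilde Q_t^\alpha\xi=\lim_n Q_tf_n=0$ for every $t$, hence $\|\xi\|_{C^{\alpha,Q}(\tilde w)}=0$.

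Finally \ref{proposition:tildew:importantfacts:contit}, which I expect to be the step requiring the most care. For $f\in C(\tilde w)$ and $s,t\in(0,1]$ I would write $Q_s(Q_t-\id)f=Q_{s+t}f-Q_sf$; the case $t>s$ is dispatched term by term using $\|Q_uf\|_{L^\infty(\tilde w)}\lesssim u^{\alpha/\ell}\|f\|_{C^{\alpha,Q}(\tilde w)}$ for $u\le1$ (and the uniform boundedness of $Q_u$, $u\le1$, on $L^\infty(\tilde w)$ when $s+t>1$), which gives $s^{-(\alpha-\varepsilon)/\ell}\|Q_{s+t}f-Q_sf\|_{L^\infty(\tilde w)}\lesssim s^{\varepsilon/\ell}\|f\|_{C^{\alpha,Q}(\tilde w)}\le t^{\varepsilon/\ell}\|f\|_{C^{\alpha,Q}(\tilde w)}$. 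For $t\le s$ I would use $Q_{s+t}f-Q_sf=\int_s^{s+t}(\partial_rQ_r)f\,dr$ (pointwise fundamental theorem of calculus, legitimate by Definition~\ref{asmp1}-\ref{asmp2:derivative}) together with the key estimate
\[
\|(\partial_rQ_r)f\|_{L^\infty(\tilde w)}\lesssim r^{-1+\alpha/\ell}\,\|f\|_{C^{\alpha,Q}(\tilde w)}\qquad(0<r\le2),
\]
which I would obtain by writing $(\partial_rQ_r)f=\big((\partial_wQ_w)|_{w=r/2}\big)(Q_{r/2}f)$ via the semigroup property, applying the kernel bound of Definition~\ref{asmp1}-\ref{asmp2:derivative} to the outer factor and \ref{proposition:tildew:importantfacts:Qtregularize} (with $\|Q_{r/2}f\|_{L^\infty(\tilde w)}\le(r/2)^{\alpha/\ell}\|f\|_{C^{\alpha,Q}(\tilde w)}$) to the inner one, and then absorbing the factor $\varphi(y)^{-\beta}$ produced by the kernel bound against $G_{r/2}$ through Lemma~\ref{lemma:molifysingularity} with $\alpha=0$; this returns precisely $\varphi(x)^{-\beta}$, which cancels the $\varphi(x)^\beta$ in $\tilde w(x)$ and leaves no residual singularity. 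Integrating over $r\in[s,s+t]$, on which $r\asymp s$, yields $s^{-(\alpha-\varepsilon)/\ell}\|Q_{s+t}f-Q_sf\|_{L^\infty(\tilde w)}\lesssim t^{\varepsilon/\ell}(t/s)^{1-\varepsilon/\ell}\|f\|_{C^{\alpha,Q}(\tilde w)}\le t^{\varepsilon/\ell}\|f\|_{C^{\alpha,Q}(\tilde w)}$, using $\varepsilon\le\ell$ and $t\le s$; the general case $f\in C^{\alpha,Q}(\tilde w)$ follows by density. The delicate point throughout is exactly this interplay between the time-differentiated kernel and the singular weight $\varphi^{-\beta}$: one must check that the only loss incurred is the harmless $\varphi(x)^\beta\varphi(x)^{-\beta}=1$ and that $\beta<\mfs_1$ keeps all the relevant integrals convergent --- everything else is a transcription of \cite{Ho23} with Lemma~\ref{lemma:molifysingularity} and Corollary~\ref{proposition:Gtboundedtildew} inserted in place of the unweighted estimates.
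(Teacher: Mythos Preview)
Your proposal is correct and follows essentially the same approach as the paper: parts~\ref{proposition:tildew:importantfacts:Qtregularize} and~\ref{proposition:tildew:importantfacts:defofnorm} are handled via Lemma~\ref{lemma:molifysingularity} and Corollary~\ref{proposition:Gtboundedtildew} exactly as in the paper (which uses the specific exponent $\mfs_1$ where you allow any $\alpha\in(\beta,\mfs_1)$, a cosmetic difference), while for parts~\ref{proposition:tildew:importantfacts:embedding}--\ref{proposition:tildew:importantfacts:contit} the paper simply defers to \cite[Proposition~2.14 and Lemma~2.15]{Ho23}, and your sketch spells out that deferral in the expected way. One minor wording point: in~\ref{proposition:tildew:importantfacts:defofnorm} you need only continuity of $f$ at the fixed point $x$, not uniform continuity.
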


The norm $C^{\alpha,Q}(\varphi ^\beta w)$ is used in the proof of Theorem \ref{theorem:singularreconstruction}.

\begin{proof}
\ref{proposition:tildew:importantfacts:Qtregularize}
We have $Q_tf\in L^\infty(w)$ by Corollary \ref{proposition:Gtboundedtildew}.
To show the continuity of $(Q_tf)(x)$ with respect to $x$, it is sufficient to consider the case $t=1$. 
By the property \eqref{weightintegrable}, for any fixed $R>0$ and $n\ge0$, the inequalities
\begin{align*}
w(x)|Q_1(x,y)f(y)|
&\lesssim w^*(x-y)w(y)G(x-y)|f(y)|
\lesssim \frac{\varphi (y)^{-\beta}}{1+\|y\|_\mfs^n}\|f\|_{L^\infty(\tilde{w})}
\end{align*}
hold uniformly over $\|x\|_\mfs\le R$ and $y\in\bbR^d$.
Since $\int_{\bbR^d}\varphi (y)^{-\beta}/(1+\|y\|_\mfs^n)dy<\infty$ for $n>|\mfs|$, we have
\begin{align*}
\lim_{z\to x}(Q_1f)(z)w(z)=\int_{\bbR^d}\lim_{z\to x}Q_1(z,y)f(y)w(z)dy=(Q_1f)(x)w(x)
\end{align*}
by Lebesgue's convergence theorem. 
Since $w$ is strictly positive and continuous, we have $\lim_{z\to x}(Q_1f)(z)=(Q_1f)(x)$.

\medskip

\ref{proposition:tildew:importantfacts:defofnorm}
It is sufficient to show that
$$
\lim_{t\downarrow0}(Q_tf)(x)=f(x)
$$
for any $f\in C(\tilde{w})$ and $x\in\bbR^d$.
For any $\varepsilon>0$, we can choose $\delta>0$ such that $|f(y)-f(x)|<\varepsilon$ if $\|y-x\|_\mfs<\delta$, and have

\begin{align*}
&|w(x)(Q_tf-f)(x)|\\
&=w(x)\bigg|\int_{\bbR^d}Q_t(x,y)\big(f(y)-f(x)\big)dy+\bigg(\int_{\bbR^d}Q_t(x,y)dy-1\bigg)f(x)\bigg|\\
&\le w(x)\varepsilon\int_{\|y-x\|_\mfs<\delta}G_t(x-y)dy+w(x)\int_{\|y-x\|_\mfs\ge\delta}G_t(x-y)|f(y)|dy\\
&\quad+w(x)|f(x)|\int_{\|y-x\|_\mfs\ge\delta}G_t(x-y)dy+w(x)|f(x)|\bigg|\int_{\bbR^d}Q_t(x,y)dy-1\bigg|.
\end{align*}

In the far right-hand side, the only nontrivial part is the second term. We bound it from above by
\begin{align*}
&\int_{\|y-x\|_\mfs\ge\delta}G_t(x-y)\wco(x-y)|f(y)|w(y)dy\\
&\le\|f\|_{L^\infty(\tilde{w})}\int_{\|y-x\|_\mfs\ge\delta}\varphi (y)^{-\beta}w^*(x-y)G_t(x-y)dy\\
&\le\|f\|_{L^\infty(\tilde{w})}\delta^{-\mfs_1}\int_{\bbR^d}\|y-x\|_\mfs^{\mfs_1}\varphi (y)^{-\beta}w^*(x-y)G_t(x-y)dy\\
&\lesssim\|f\|_{L^\infty(\tilde{w})}\delta^{-\mfs_1}t^{(\mfs_1-\beta)/\mfs_1}.
\end{align*}
Since $\beta<\mfs_1$, we obtain the convergence as $t\downarrow0$.

\medskip

The proofs of \ref{proposition:tildew:importantfacts:embedding} and \ref{proposition:tildew:importantfacts:embeddingwtildew} are similar to \cite[Proposition 2.14]{Ho23},
and the proof of \ref{proposition:tildew:importantfacts:contit} is similar to \cite[Lemma 2.15]{Ho23}.
\end{proof}

\section{Reconstruction of singular modelled distributions}\label{section:singular}

In this section, we recall from \cite{Hai14} the definitions of regularity structures, models, and singular modelled distributions, and prove the reconstruction theorem for singular modelled distributions using the operator semigroup.
For simplicity, we consider only regularity structures, rather than general regularity-integrability structures as in \cite{Ho23}.
Throughout this and next sections, we fix a $G$-type semigroup $\{Q_t\}_{t>0}$.

\subsection{Regularity structures and models}

\begin{defi}\label{*def:ATG}
A \emph{regularity structure} $\scT=(\bfA,\bfT,\bfG)$ consists of the following objects.
\begin{itemize}
\item[(1)]
(Index set) $\bfA$ is a locally finite subset of $\bbR$ bounded below.
\item[(2)]
(Model space) $\bfT=\bigoplus_{\alpha\in \bfA}\bfT_\alpha$ is an algebraic sum of Banach spaces $(\bfT_\alpha,\|\cdot\|_\alpha)$.
\item[(3)]
(Structure group) $\bfG$ is a group of continuous linear operators on $\bfT$ such that, for any $\Gamma\in \bfG$ and $\alpha\in\bfA$,
$$
(\Gamma-\id)\bfT_\alpha\subset\bfT_{<\alpha}:=\bigoplus_{{\beta}\in \bfA,\, {\beta} < \alpha }\bfT_\beta.
$$
\end{itemize}
The smallest element $\alpha_0$ of $\bfA$ is called the \emph{regularity} of $\scT$.
For any $\alpha\in\bfA$, we denote by $P_\alpha:\bfT\to\bfT_\alpha$ the canonical projection and write
$$
\|\tau\|_{\alpha}:=\|P_{\alpha}\tau\|_{\alpha}
$$
for any $\tau\in\bfT$, by abuse of notation.
\end{defi}

Following \cite{Ho23}, we define the topology on the space of models by using $\{Q_t\}_{t>0}$.
For two Banach spaces $X$ and $Y$, we denote by $\mcL(X,Y)$ the Banach space of all continuous linear operators $X\to Y$. When $Y=\bbR$, we write $X^*:=\mcL(X,\bbR)$.

\begin{defi}\label{def:model}
Let $w$ be a $G$-controlled weight.
A \emph{smooth model} $M=(\Pi,\Gamma)$ is a pair of two families of continuous linear operators $\Pi=\{\Pi_x:\bfT\to C(w)\}_{x\in\bbR^d}$ and $\Gamma=\{\Gamma_{xy}\}_{x,y\in\bbR^d}\subset \bfG$ with the following properties.
\begin{itemize}
\item[(1)]
(Algebraic conditions)
$\Pi_x\Gamma_{xy}=\Pi_y$, $\Gamma_{xx}=\id$, and $\Gamma_{xy}\Gamma_{yz}=\Gamma_{xz}$ for any $x,y,z\in\bbR^d$.
\item[(2)]
(Analytic conditions)
For any $\gamma\in\bbR$,
\begin{align*}
\|\Pi\|_{\gamma,w}&:=\max_{\alpha\in\bfA,\, \alpha<\gamma}\,
\sup_{0<t\le1}\, \sup_{x\in\bbR^d}
\Big(t^{-\alpha/\ell}w(x)\big\|Q_t\big(x,\Pi_x(\cdot)\big)\big\|_{\bfT_\alpha^*}\Big)
\\
&=\max_{\alpha\in\bfA,\, \alpha<\gamma}\,
\sup_{0<t\le1}\, \sup_{x\in\bbR^d}\, \sup_{\tau\in \bfT_{\alpha}\setminus\{0\}}
\Bigg(t^{-\alpha/\ell}w(x)
\frac{|Q_t(x,\Pi_x\tau)|}{\|\tau\|_\alpha}
\Bigg)<\infty
\end{align*}
and
\begin{align*}
\|\Gamma\|_{\gamma,w}
&:=\max_{\substack{\alpha,\beta\in\bfA\\ \beta<\alpha<\gamma}}\,
\sup_{x,y\in\bbR^d,\,x\neq y}
\frac{w(x)\|\Gamma_{yx}\|_{\mcL(\bfT_{\alpha},\bfT_{\beta})}}{w^\ast(y-x)\|y-x\|_\mfs^{\alpha-\beta}}\\
&=\max_{\substack{\alpha,\beta\in\bfA\\ \beta<\alpha<\gamma}}\,
\sup_{x,y\in\bbR^d,\,x\neq y}\, \sup_{\tau\in \bfT_{\alpha}\setminus\{0\}}
\frac{w(x)\|\Gamma_{yx}\tau\|_\beta}{w^\ast(y-x)\|y-x\|_\mfs^{\alpha-\beta}\|\tau\|_\alpha}<\infty.
\end{align*}
\end{itemize}
We write $\tri M\tri_{\gamma,w}:=\|\Pi\|_{\gamma,w}+\|\Gamma\|_{\gamma,w}$.
In addition, for any two smooth models $M^{(i)}=(\Pi^{(i)},\Gamma^{(i)})$ with $i\in\{1,2\}$, we define the pseudo-metrics
$$
\tri M^{(1)};M^{(2)}\tri_{\gamma,w}:=\|\Pi^{(1)}-\Pi^{(2)}\|_{\gamma,w}+\|\Gamma^{(1)}-\Gamma^{(2)}\|_{\gamma,w}
$$
by replacing $\Pi$ and $\Gamma$ above with $\Pi^{(1)}-\Pi^{(2)}$ and $\Gamma^{(1)}-\Gamma^{(2)}$ respectively.
Finally, we define the space $\scM_w(\scT)$ as the completion of the set of all smooth models, under the pseudo-metrics $\tri\cdot;\cdot\tri_{\gamma,w}$ for all $\gamma\in\bbR$.
We call each element of $\scM_w(\scT)$ a \emph{model} for $\scT$. We still use the notation $M=(\Pi,\Gamma)$ to denote a generic model.
\end{defi}

When $\ell=2$ and $Q_t$ is the heat semigroup $e^{t\Delta}$, the above definition essentially coincides with the original definition of models \cite[Definition 2.17]{Hai14} if we ignore the difference between local and global bounds. For more general semigroups, such an equivalence is unclear by the same reason as the case of Besov norms.

\begin{rem}\label{proposition:whatisPixtau}
As stated in \cite[Proposition 3.3]{Ho23}, if there exist two $G$-controlled weights $w_1$ and $w_2$ that satisfy
$$
\sup_{x\in\bbR^d}\big\{\|x\|_\mfs^n\,\wco(x)w_1(x)\big\}
+\sup_{x\in\bbR^d}\big\{\|x\|_\mfs^n\,\wco_1(x)w_2(x)\big\}
<\infty
$$
for any $n\ge0$, and such that $ww_1$ and $ww_2$ are also $G$-controlled, then we can regard $\Pi_x$ as a continuous linear operator from $\bfT$ to $C^{\alpha_0\wedge0,Q}(ww_1)$, where $\alpha_0$ is the regularity of $\scT$.
More precisely, for any $\alpha<\gamma$ and $\tau\in\bfT_\alpha$ we have
$$
\sup_{x\in\bbR^{d}}(ww_2)(x)\|\Pi_x\tau\|_{C^{\alpha_0\wedge0,Q}(ww_1)}
\lesssim\|\Pi\|_{\gamma,w}(1+\|\Gamma\|_{\gamma,w})\|\tau\|_\alpha.
$$
In what follows, we assume the existence of $w_1$ and $w_2$ as above, and regard $\Pi_x\tau$ as an element of $C^{\alpha_0\wedge0,Q}(ww_1)$ for any $\tau\in\bfT$.
\end{rem}

\subsection{Singular modelled distributions}

Throughout the rest of this section, we fix a regularity structure $\scT$ of regularity $\alpha_0$, and also fix $G$-controlled weights $w$ and $v$ such that $wv$ is also $G$-controlled.
Recall the definitions of functions $\varphi (x)$ and $\varphi (x,y)$ from Section \ref{2:sec:timeweight}.

\begin{defi}\label{def:besovsMD}
Let $M = (\Pi,\Gamma) \in \scM_w(\scT)$. For any $\gamma\in\bbR$ and $\eta\le\gamma$, we define $\mcD_v^{\gamma,\eta}(\Gamma)$ as the space of all functions $f:(\bbR\setminus\{0\})\times\bbR^{d-1}\to \bfT_{<\gamma}$ such that
\begin{align*}
\lp f\rp_{\gamma,\eta,v}&:=\max_{\alpha<\gamma}\sup_{x\in(\bbR\setminus\{0\})\times\bbR^{d-1}}
\frac{v(x)\|f(x)\|_\alpha}{\varphi (x)^{(\eta-\alpha)\wedge0}}<\infty,\\
\| f\|_{\gamma,\eta,v}
&:=\max_{\alpha<\gamma}\sup_{\substack{x,y\in(\bbR\setminus\{0\})\times\bbR^{d-1},\,x\neq y \\ \|y-x\|_\mfs\le\varphi (x,y)}}
\frac{v(x)\|\Delta^\Gamma_{yx}f\|_\alpha}{\varphi (x,y)^{\eta-\gamma}\,v^*(x-y)\|y-x\|_\mfs^{\gamma-\alpha}}
<\infty,
\end{align*}
where $\Delta^\Gamma_{yx}f := f(y) - \Gamma_{yx}f(x)$. 
We write $\tri f\tri_{\gamma,\eta,v}:=\lp f\rp_{\gamma,\eta,v}+\|f\|_{\gamma,\eta,v}$.
We call each element of $\mcD_v^{\gamma,\eta}(\Gamma)$ a \emph{singular modelled distribution}.

In addition, for any two models $M^{(i)}=(\Pi^{(i)},\Gamma^{(i)})\in\scM_{w}(\scT)$ and singular modelled distributions $f^{(i)}\in\mcD_v^{\gamma,\eta}(\Gamma^{(i)})$ with $i\in\{1,2\}$, we define $\tri f^{(1)};f^{(2)}\tri_{\gamma,\eta,v}:=\lp f^{(1)}-f^{(2)}\rp_{\gamma,\eta,v}+\|f^{(1)};f^{(2)}\|_{\gamma,\eta,v}$ by
\begin{align*}
\lp f^{(1)}-f^{(2)}\rp_{\gamma,\eta,v}&:=\max_{\alpha<\gamma}\sup_{x\in(\bbR\setminus\{0\})\times\bbR^{d-1}}
\frac{v(x)\|f^{(1)}(x) - f^{(2)}(x)\|_\alpha}{\varphi (x)^{(\eta-\alpha)\wedge0}},\\
\| f^{(1)};f^{(2)}\|_{\gamma,\eta,v}
&:=\max_{\alpha<\gamma}\sup_{\substack{x,y\in(\bbR\setminus\{0\})\times\bbR^{d-1},\, x\neq y \\ \|y-x\|_\mfs\le\varphi (x,y)}}
\frac{v(x)\|\Delta^\Gamma_{yx}f^{(1)} - \Delta^\Gamma_{yx}f^{(2)} \|_\alpha}
{\varphi (x,y)^{\eta-\gamma}\,v^*(x-y)\|y-x\|_\mfs^{\gamma-\alpha}}.
\end{align*}
\end{defi}

In \cite{Hai14}, the topologies of the space of models and the space of modelled distributions are defined by the family of pseudo-metrics parametrized by compact subsets $K$ of $\bbR^d$, where $x$ and $y$ in the above definitions are restricted within $K$.
In this paper, we employ weight functions $w$ and $v$ instead of such local bounds.

We consider the relations between $\mcD_v^{\gamma,\eta}$ under varying parameters $\gamma,\eta$, as well as the relation between $\mcD_v^{\gamma,\eta}$ and a variant.
We say that the function $u:\bbR^d\to\bbR$ is \emph{symmetric} if $u(-x)=u(x)$ for any $x\in\bbR^d$.

\begin{prop}\label{def:variantsnorms}
Let $M = (\Pi,\Gamma) \in \scM_w(\scT)$ and $\eta\le\gamma$.
\begin{enumerate}
\renewcommand{\theenumi}{(\roman{enumi})}
\renewcommand{\labelenumi}{(\roman{enumi})}
\item\label{def:variantsnorms-1}
For any $\theta\le\eta$, we have the continuous embedding $\mcD_v^{\gamma,\eta}(\Gamma)\hookrightarrow\mcD_v^{\gamma,\theta}(\Gamma)$.
\item\label{def:variantsnorms0}
Assume that $w^*$ is symmetric.
For each $\alpha\in\bbR$, we denote by $P_{<\alpha}:\bfT\to\bfT_{<\alpha}$ the canonical projection.
For any $\eta\le\delta\le\gamma$, the map $P_{<\delta}$ extends to a continuous linear map $\mcD_v^{\gamma,\eta}(\Gamma)\to\mcD_{wv}^{\delta,\eta}(\Gamma)$.
To be precise, we have the inequality
$$
\| P_{<\delta}f\|_{\delta,\eta,wv}
\lesssim\|\Gamma\|_{\gamma,w}\lp f\rp_{\gamma,\eta,v}+\| f\|_{\gamma,\eta,v}.
$$
\item\label{def:variantsnorms1}
Instead of the norm $\| f\|_{\gamma,\eta,v}$, we define
$$
\| f\|_{\gamma,\eta,v}^{\#}
:=\max_{\alpha<\gamma}\sup_{x,y\in(\bbR\setminus\{0\})\times\bbR^{d-1},\,x\neq y}
\frac{v(x)\|\Delta^\Gamma_{yx}f\|_\alpha}{\varphi (x,y)^{\eta-\gamma}\,v^*(x-y)\|y-x\|_\mfs^{\gamma-\alpha}}.
$$
Then the inequality $\| f\|_{\gamma,\eta,v}\le\| f\|_{\gamma,\eta,v}^\#$ obviously holds.
Conversely, if $w^*$ is symmetric, then we also have
$$
\|f\|_{\gamma,\eta\wedge\alpha_0,wv}^\#
\lesssim(1+\|\Gamma\|_{\gamma,w})\lp f\rp_{\gamma,\eta,v}+\| f\|_{\gamma,\eta,v}.
$$
\end{enumerate}
\end{prop}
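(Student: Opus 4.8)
The three parts are all established by unwinding Definition~\ref{def:besovsMD} and repeatedly invoking the elementary facts that $0<\varphi(x,y)=\varphi(x)\wedge\varphi(y)\le1$ for $x,y\in(\bbR\setminus\{0\})\times\bbR^{d-1}$ (since $x_1,y_1\neq0$), that $0\le w\le1\le w^{\ast}$, and that $t\mapsto t^{p}$ is nonincreasing on $(0,1]$ when $p\le0$ and nondecreasing when $p\ge0$. Part (i) is then immediate: for $\theta\le\eta$ one has $(\theta-\alpha)\wedge0\le(\eta-\alpha)\wedge0\le0$ and $\theta-\gamma\le\eta-\gamma\le0$, so, since every $\varphi$-factor is $\le1$, each denominator defining $\lp\cdot\rp_{\gamma,\theta,v}$ and $\|\cdot\|_{\gamma,\theta,v}$ dominates the corresponding one for $\eta$, whence $\tri f\tri_{\gamma,\theta,v}\le\tri f\tri_{\gamma,\eta,v}$ and $\mcD_v^{\gamma,\eta}(\Gamma)\hookrightarrow\mcD_v^{\gamma,\theta}(\Gamma)$ continuously.

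For part (iii) the only thing to prove is the displayed converse inequality. Fix $\alpha\in\bfA$ with $\alpha<\gamma$ and $x\neq y$ in $(\bbR\setminus\{0\})\times\bbR^{d-1}$, and split into the ``near'' case $\|y-x\|_\mfs\le\varphi(x,y)$ and the ``far'' case $\|y-x\|_\mfs\ge\varphi(x,y)$. In the near case, $(\eta\wedge\alpha_0)-\gamma\le\eta-\gamma\le0$ gives $\varphi(x,y)^{(\eta\wedge\alpha_0)-\gamma}\ge\varphi(x,y)^{\eta-\gamma}$, and together with $(wv)(x)\le v(x)$ and $(wv)^{\ast}(x-y)\ge v^{\ast}(x-y)$ the relevant quotient in $\|f\|^{\#}_{\gamma,\eta\wedge\alpha_0,wv}$ is bounded by the corresponding quotient in the definition of $\|f\|_{\gamma,\eta,v}$, hence by $\|f\|_{\gamma,\eta,v}$. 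In the far case we write $\Delta^\Gamma_{yx}f=f(y)-\Gamma_{yx}f(x)$. The bound $\lp f\rp_{\gamma,\eta,v}$ gives $v(y)\|f(y)\|_\alpha\lesssim\varphi(y)^{(\eta-\alpha)\wedge0}\lp f\rp_{\gamma,\eta,v}$; and, writing $f(x)=\sum_{\alpha\le\beta<\gamma}P_\beta f(x)$, using $P_\alpha(\Gamma_{yx}-\id)P_\alpha=0$, and applying the definition of $\|\Gamma\|_{\gamma,w}$ to the components with $\beta>\alpha$,
\[
\|\Gamma_{yx}f(x)\|_\alpha\le\|f(x)\|_\alpha+\|\Gamma\|_{\gamma,w}\sum_{\alpha<\beta<\gamma}\frac{w^{\ast}(y-x)\,\|y-x\|_\mfs^{\beta-\alpha}}{w(x)}\,\|f(x)\|_\beta,
\]
after which $\lp f\rp_{\gamma,\eta,v}$ controls each $\|f(x)\|_\beta$ by $\varphi(x)^{(\eta-\beta)\wedge0}v(x)^{-1}\lp f\rp_{\gamma,\eta,v}$.

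Multiplying through by $(wv)(x)$ and using $w(x)\le1\le w^{\ast}(x-y)$, the moderateness $v(x)\le v^{\ast}(x-y)v(y)$, and — this is the one place symmetry is used — the identity $w^{\ast}(y-x)=w^{\ast}(x-y)$, which turns the stray factor $w^{\ast}(y-x)$ into the $(wv)^{\ast}(x-y)$ carried by the target norm, the whole far-case estimate reduces to the elementary claim
\[
\frac{\varphi(z)^{(\eta-\beta)\wedge0}}{\varphi(x,y)^{(\eta\wedge\alpha_0)-\gamma}\,\|y-x\|_\mfs^{\gamma-\beta}}\lesssim1
\qquad\big(z\in\{x,y\},\ \alpha\le\beta<\gamma\big).
\]
Since $\gamma-\beta>0$ and $\|y-x\|_\mfs\ge\varphi(x,y)$ one may replace $\|y-x\|_\mfs$ by $\varphi(x,y)$ in the denominator; since $\varphi(z)\ge\varphi(x,y)$ and $(\eta-\beta)\wedge0\le0$ one may replace $\varphi(z)$ by $\varphi(x,y)$ in the numerator. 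What remains is $\varphi(x,y)$ raised to the power $((\eta-\beta)\wedge0)-(\eta\wedge\alpha_0)+\beta$, which equals $\beta-(\eta\wedge\alpha_0)$ when $\eta\ge\beta$ and $\eta-(\eta\wedge\alpha_0)$ when $\eta<\beta$; both are $\ge0$ because $\beta\ge\alpha_0\ge\eta\wedge\alpha_0$ and $\eta\ge\eta\wedge\alpha_0$. Combining the near and far cases gives $\|f\|^{\#}_{\gamma,\eta\wedge\alpha_0,wv}\lesssim(1+\|\Gamma\|_{\gamma,w})\lp f\rp_{\gamma,\eta,v}+\|f\|_{\gamma,\eta,v}$, the number of $\beta$'s (finite by local finiteness of $\bfA$) being absorbed into the implicit constant.

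Part (ii) is proved by the same scheme. The algebraic input is the identity $\Delta^\Gamma_{yx}(P_{<\delta}f)=P_{<\delta}\Delta^\Gamma_{yx}f+P_{<\delta}\Gamma_{yx}P_{\ge\delta}f(x)$, valid because $\Gamma_{yx}$ preserves $\bfT_{<\delta}$; on the admissible regime $\|y-x\|_\mfs\le\varphi(x,y)$ of the target norm, the first summand is absorbed into $\|f\|_{\gamma,\eta,v}$ (using $\gamma\ge\delta$, so that $\|y-x\|_\mfs^{\gamma-\delta}\varphi(x,y)^{\delta-\gamma}\le1$), and the second into $\|\Gamma\|_{\gamma,w}\lp f\rp_{\gamma,\eta,v}$, via the same $\|\Gamma\|$-bound now for indices $\beta\ge\delta$, the symmetry of $w^{\ast}$, and the inequality $\|y-x\|_\mfs^{\beta-\delta}\varphi(x)^{\eta-\beta}\le\varphi(x,y)^{\eta-\delta}$ valid for $\beta\ge\delta\ge\eta$; the bound $\lp P_{<\delta}f\rp_{\delta,\eta,wv}\le\lp f\rp_{\gamma,\eta,v}$ is immediate from $w\le1$. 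I expect the only real obstacle to be the bookkeeping in the far case of part (iii): one must keep careful track of exactly which powers of $\varphi(x,y)$, $\varphi(x)$, $\varphi(y)$ and $\|y-x\|_\mfs$ arise, and it is precisely the demand that the leftover power of $\varphi(x,y)$ be nonnegative that forces the regularity index to drop from $\eta$ to $\eta\wedge\alpha_0$ — with $\eta$ alone the exponent would be $\alpha-\eta<0$ for any $\alpha\in\bfA$ with $\alpha<\eta$. A secondary but essential point, easily overlooked, is that the natural estimate for $\Gamma_{yx}$ yields a factor $w^{\ast}(y-x)$ while the target norms carry $(wv)^{\ast}(x-y)$, which is exactly why the symmetry hypothesis on $w^{\ast}$ is imposed in parts (ii) and (iii).
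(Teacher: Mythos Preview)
Your proof is correct and follows essentially the same approach as the paper: part (i) by monotonicity of $\varphi$-powers, part (ii) via the decomposition $\Delta^\Gamma_{yx}(P_{<\delta}f)=P_{<\delta}\Delta^\Gamma_{yx}f+P_{<\delta}\Gamma_{yx}P_{\ge\delta}f(x)$, and part (iii) by splitting into near/far regimes and, in the far regime, bounding $f(y)$ and $\Gamma_{yx}f(x)$ separately using $\lp f\rp_{\gamma,\eta,v}$ and $\|\Gamma\|_{\gamma,w}$. Your exponent bookkeeping in the far case --- reducing everything to the nonnegativity of $((\eta-\beta)\wedge0)-(\eta\wedge\alpha_0)+\beta$ --- is slightly more explicit than the paper's, and your separate handling of the $\beta=\alpha$ term (via $P_\alpha(\Gamma_{yx}-\id)P_\alpha=0$) cleanly explains the $1$ in the factor $(1+\|\Gamma\|_{\gamma,w})$, which the paper's ``similar to $A_2$'' glosses over; but these are differences of exposition, not of strategy.
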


\begin{proof}
\ref{def:variantsnorms-1}
The assertion immediately follows from the inequalities $\varphi (x)^{(\eta-\alpha)\wedge0}\le\varphi (x)^{(\theta-\alpha)\wedge0}$ and $\varphi (x,y)^{\eta-\gamma}\le\varphi (x,y)^{\theta-\gamma}$.
\medskip

\ref{def:variantsnorms0}
For any $x,y\in(\bbR\setminus\{0\})\times\bbR^{d-1}$ such that $\|y-x\|_\mfs\le\varphi (x,y)$ and any $\alpha<\delta$, we decompose
\begin{align*}
(wv)(x)\|\Delta_{yx}^\Gamma P_{<\delta}f\|_\alpha
&\le
v(x)\|\Delta_{yx}^\Gamma f\|_\alpha+(wv)(x)\sum_{\beta\in[\delta,\gamma)}\|\Gamma_{yx}P_\beta f(x)\|_\alpha\\
&=:A_1+A_2.
\end{align*}
For $A_1$, by definition of the norm $\|f\|_{\gamma,\eta,v}$ we have
\begin{align*}
A_1
&\le\|f\|_{\gamma,\eta,v}\,v^*(x-y)\varphi (x,y)^{\eta-\gamma}\|y-x\|_\mfs^{\gamma-\alpha}\\
&\le
\|f\|_{\gamma,\eta,v}\,v^*(x-y)\varphi (x,y)^{\eta-\delta}\|y-x\|_\mfs^{\delta-\alpha}.
\end{align*}
For $A_2$, by definitions of the model and the norm $\lp f\rp_{\gamma,\eta,v}$ we have
\begin{align*}
A_2&\le\sum_{\beta\in[\delta,\gamma)}w(x)\|\Gamma_{yx}\|_{\mcL(\bfT_\beta,\bfT_\alpha)}v(x)\|f(x)\|_\beta\\
&\le\|\Gamma\|_{\gamma,w}\lp f\rp_{\gamma,\eta,v}\,
w^*(y-x)\sum_{\beta\in[\delta,\gamma)}\|y-x\|_\mfs^{\beta-\alpha}\varphi (x)^{\eta-\beta}\\
&\le\|\Gamma\|_{\gamma,w}\lp f\rp_{\gamma,\eta,v}\,
w^*(x-y)\|y-x\|_\mfs^{\delta-\alpha}\varphi (x,y)^{\eta-\delta}.
\end{align*}
Thus we obtain the desired inequality for $\| P_{<\delta}f\|_{\delta,\eta,wv}$.

\medskip

\ref{def:variantsnorms1}
It is sufficient to show the estimate of $\Delta_{yx}^\Gamma f$ on the region $\|y-x\|_\mfs>\varphi (x,y)$.
For any $\alpha<\gamma$ we decompose
$$
(wv)(x)\|\Delta^\Gamma_{yx}f\|_\alpha\\
\le v(x)\|f(y)\|_\alpha+(wv)(x)\sum_{\beta\in[\alpha,\gamma)}\|\Gamma_{yx}P_\beta f(x)\|_\alpha
=:B_1+B_2.
$$
For $B_1$, by definition of the norm $\lp f\rp_{\gamma,\eta,v}$ we have
\begin{align*}
B_1
&\le v^*(x-y)v(y)\|f(y)\|_\alpha\\
&\le\lp f\rp_{\gamma,\eta,v}\,v^*(x-y)\varphi (y)^{(\eta-\alpha)\wedge0}\\
&\le\lp f\rp_{\gamma,\eta,v}\,v^*(x-y)\varphi (x,y)^{(\eta-\alpha)\wedge0}\\
&\le
\lp f\rp_{\gamma,\eta,v}\,v^*(x-y)\varphi (x,y)^{\eta\wedge\alpha-\gamma}\|y-x\|_\mfs^{\gamma-\alpha}.
\end{align*}
For $B_2$, by an argument similar to $A_2$ in the proof of \ref{def:variantsnorms0}, we have
\begin{align*}
B_2
&\le\|\Gamma\|_{\gamma,w}\lp f\rp_{\gamma,\eta,v}\,
w^*(y-x)\sum_{\beta\in[\alpha,\gamma)}\|y-x\|_\mfs^{\beta-\alpha}\varphi (x)^{(\eta-\beta)\wedge0}\\
&\lesssim
\|\Gamma\|_{\gamma,w}\lp f\rp_{\gamma,\eta,v}\,
w^*(x-y)\|y-x\|_\mfs^{\gamma-\alpha}
\varphi (x,y)^{\eta\wedge\alpha-\gamma}.
\end{align*}
Thus we obtain the desired inequality.
\end{proof}


We also recall the definition of reconstruction.

\begin{defi}\label{def:besovreconst}
Let $M = (\Pi,\Gamma) \in \scM_w(\scT)$. For any $\eta\le\gamma$ and $f\in\mcD_{v}^{\gamma,\eta}(\Gamma)$, we say that $\Lambda\in C^{\zeta,Q}(wv)$ with some $\zeta\le0$ is a \emph{reconstruction} of $f$ for $M$, if it satisfies
\begin{align*}
\lb\Lambda\rb_{\gamma,\eta,wv}:=\sup_{0<t\le1}\sup_{x\in(\bbR\setminus\{0\})\times\bbR^{d-1}}
\Big(t^{-\gamma/\ell}\varphi (x)^{\gamma-\eta}(wv)(x)|Q_t(x,\Lambda_x)|\Big)<\infty,
\end{align*}
where $\Lambda_x:=\Lambda-\Pi_xf(x)$.
Furthermore, for any $M^{(i)}=(\Pi^{(i)},\Gamma^{(i)})\in\scM_{w}(\scT)$, $f^{(i)}\in\mcD_{v}^{\gamma,\eta}(\Gamma^{(i)})$, and any reconstructions $\Lambda^{(i)}\in C^{\zeta,Q}(wv)$ of $f^{(i)}$ for $M^{(i)}$ with $i\in\{1,2\}$, we define
\begin{align*}
& \lb\Lambda^{(1)};\Lambda^{(2)}\rb_{\gamma,\eta,wv}\\
&:=\sup_{0<t\le1}\sup_{x\in(\bbR\setminus\{0\})\times\bbR^{d-1}}
\Big(t^{-\gamma/\ell}\varphi (x)^{\gamma-\eta}(wv)(x)\big|Q_t\big(x,\Lambda^{(1)}_x-\Lambda^{(2)}_x\big)\big|\Big),
\end{align*}
where $\Lambda_x^{(i)}:=\Lambda^{(i)}-\Pi_x^{(i)}f^{(i)}(x)$ for each $i\in\{1,2\}$.
\end{defi}

\subsection{Reconstruction Theorem}

In this subsection, we provide a short proof of the reconstruction theorem.
First, we prove the theorem for the subclass $\mcD_v^{\gamma,\eta}(\Gamma)^\#$ of $\mcD_v^{\gamma,\eta}(\Gamma)$ consisting of all functions $f:(\bbR\setminus\{0\})\times\bbR^{d-1}\to\bfT_{<\gamma}$ such that
$$
\tri f\tri_{\gamma,\eta,v}^\#:=\lp f\rp_{\gamma,\eta,v}+\|f\|_{\gamma,\eta,v}^\#<\infty.
$$
In addition, for any $M^{(i)}=(\Pi^{(i)},\Gamma^{(i)})\in\scM_{w}(\scT)$ and $f^{(i)}\in\mcD_v^{\gamma,\eta}(\Gamma^{(i)})^\#$ with $i\in\{1,2\}$, we define $\tri f^{(1)};f^{(2)}\tri_{\gamma,\eta,v}^\#:=\lp f^{(1)}-f^{(2)}\rp_{\gamma,\eta,v}+\|f^{(1)};f^{(2)}\|_{\gamma,\eta,v}^\#$ similarly to definition \ref{def:besovsMD}.

\begin{thm}\label{theorem:singularreconstruction}
Let $\gamma>0$ and $\eta\in(\gamma-\mfs_1,\gamma]$. Then for any $M=(\Pi,\Gamma)\in\scM_{w}(\scT)$ and $f\in\mcD_{v}^{\gamma,\eta}(\Gamma)^\#$, there exists a unique reconstruction $\mcR f\in C^{\zeta,Q}(wv)$ of $f$ for $M$ with $\zeta:=\eta\wedge\alpha_0\wedge0$ and it holds that
\begin{align}
\label{besovreconstruction}
\|\mcR f\|_{C^{\zeta,Q}(wv)}&\lesssim\|\Pi\|_{\gamma,w}\tri f\tri_{\gamma,\eta,v}^\#,\\
\label{besovreconstructioncharacterize}
\lb\mcR f\rb_{\gamma,\eta,wv}&\lesssim \|\Pi\|_{\gamma,w}\|f\|_{\gamma,\eta,v}^\#.
\end{align}
Moreover, there is an affine function $C_R>0$ of $R>0$ such that
\begin{align*}
\|\mcR f^{(1)}-\mcR f^{(2)}\|_{C^{\zeta,Q}(wv)}&\le C_R\big(\|\Pi^{(1)}-\Pi^{(2)}\|_{\gamma,w}+\tri f^{(1)};f^{(2)}\tri_{\gamma,\eta,v}^\#\big),\\
\lb\mcR f^{(1)};\mcR f^{(2)}\rb_{\gamma,\eta,wv}&\le C_R\big(\|\Pi^{(1)}-\Pi^{(2)}\|_{\gamma,w}+\| f^{(1)};f^{(2)}\|_{\gamma,\eta,v}^\#\big)
\end{align*}
for any $M^{(i)}=(\Pi^{(i)},\Gamma^{(i)})\in\scM_{w}(\scT)$ and $f^{(i)}\in\mcD_{v}^{\gamma,\eta}(\Gamma^{(i)})$ with $i\in\{1,2\}$ such that $\tri M^{(i)}\tri_{\gamma,w}\le R$ and $\tri f^{(i)}\tri_{\gamma,\eta,v}^\#\le R$.
\end{thm}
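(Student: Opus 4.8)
The plan is to follow the operator-semigroup strategy of \cite{Ho23}, the new element being the systematic use of Lemma \ref{lemma:molifysingularity} to absorb the temporal singularity $\varphi$, and to prove the four assertions — existence together with \eqref{besovreconstruction} and \eqref{besovreconstructioncharacterize}, uniqueness, and the two Lipschitz-type bounds — in that order. Since $\scM_w(\scT)$ and $C^{\zeta,Q}(wv)$ are completions and every bound claimed is uniform on $R$-balls, it is enough to construct $\mcR f$ and prove all the estimates (difference estimates included) for a smooth model $M=(\Pi,\Gamma)$; the general case then follows by density of smooth models together with the Lipschitz-type bounds. For a smooth model put $R_t(x):=Q_t(x,\Pi_xf(x))$. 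The technical core is the \emph{coherence estimate}: using $\Pi_z\Gamma_{zx}=\Pi_x$ to rewrite $\Pi_zf(z)-\Pi_xf(x)=\Pi_z\Delta^\Gamma_{zx}f$, combining the analytic bound $|Q_t(z,\Pi_z\tau)|\le\|\Pi\|_{\gamma,w}t^{\alpha/\ell}w(z)^{-1}\|\tau\|_\alpha$ (for $\tau\in\bfT_\alpha$, $\alpha<\gamma$) with the $\#$-bound
$$
\|\Delta^\Gamma_{zx}f\|_\alpha\le\|f\|_{\gamma,\eta,v}^\#\,\frac{\varphi(x,z)^{\eta-\gamma}\,v^*(x-z)\,\|z-x\|_\mfs^{\gamma-\alpha}}{v(x)},
$$
and then integrating in $z$ against $|Q_s(x,z)|\le C_1G_s(x-z)$ by means of the weight estimates of Section \ref{section:preliminary} and Lemma \ref{lemma:molifysingularity}, one gets
$$
\big\|Q_sR_t-R_{s+t}\big\|_{L^\infty(\varphi^{\gamma-\eta}wv)}\lesssim\|\Pi\|_{\gamma,w}\|f\|_{\gamma,\eta,v}^\#\sum_{\alpha<\gamma}t^{\alpha/\ell}s^{(\gamma-\alpha)/\ell}\lesssim\|\Pi\|_{\gamma,w}\|f\|_{\gamma,\eta,v}^\#\,t^{\gamma/\ell}
$$
for $0<s\le t\le1$, $s+t\le1$, and similarly with $Q_t$ replaced by $-t\partial_tQ_t$, whose kernel is bounded by $C_2G_t$ by Definition \ref{asmp1}-\ref{asmp2:derivative}. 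It is exactly here that the hypothesis $\eta\in(\gamma-\mfs_1,\gamma]$ is used: it is the condition $\beta:=\gamma-\eta\in[0,\mfs_1)$ under which Lemma \ref{lemma:molifysingularity} applies, i.e.\ under which the $z$-integral near the hyperplane converges.

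For existence I would, following \cite{Ho23}, define $\mcR f$ as the limit of the dyadic telescoping built from $\{R_{2^{-n}}\}_{n\ge0}$; the coherence estimate makes the successive increments summable because $\gamma>0$, so the telescoping converges in $L^\infty(\varphi^{\gamma-\eta}wv)$. To upgrade this to membership in $C^{\zeta,Q}(wv)$, $\zeta=\eta\wedge\alpha_0\wedge0$, with the bound \eqref{besovreconstruction}, one uses that $Q_t$ maps $L^\infty(\varphi^{\gamma-\eta}wv)$ into $C(wv)$ (Proposition \ref{proposition:tildew:importantfacts}-\ref{proposition:tildew:importantfacts:Qtregularize}) together with the quantitative estimate $\|G_t*g\|_{L^\infty(wv)}\lesssim t^{-(\gamma-\eta)/\ell}\|g\|_{L^\infty(\varphi^{\gamma-\eta}wv)}$ of Corollary \ref{proposition:Gtboundedtildew}: applying $Q_u$ to the telescoping series term by term and summing the tail $\sum_{k\ge N}2^{-k\gamma/\ell}$ against $u$ gives $\|Q_u\mcR f\|_{L^\infty(wv)}\lesssim\|\Pi\|_{\gamma,w}\tri f\tri_{\gamma,\eta,v}^\#\,u^{\zeta/\ell}$. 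Running the same bookkeeping on $(Q_u\mcR f)(x)-R_u(x)$ for $x$ off the hyperplane — now exploiting the first, sharper inequality of Lemma \ref{lemma:molifysingularity}, which supplies both a $\varphi(x)^{-\beta}$ and a $t^{-\beta/\ell}$ bound — yields the reconstruction property $\lb\mcR f\rb_{\gamma,\eta,wv}\lesssim\|\Pi\|_{\gamma,w}\|f\|_{\gamma,\eta,v}^\#$, i.e.\ \eqref{besovreconstructioncharacterize}.

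The main obstacle is uniqueness, which as stressed in the introduction is not automatic here: since the norm of $C^{\zeta,Q}(wv)$ is not known to be comparable with a Littlewood--Paley norm, an element of this space is not visibly determined by the family $\{Q_t\Lambda\}_{t}$. If $\Lambda,\Lambda'$ are two reconstructions of the same $f$ for the same $M$, then $h:=\Lambda-\Lambda'$ lies in some $C^{\zeta',Q}(wv)$, and since $\Pi_xf(x)$ cancels in $\Lambda_x-\Lambda'_x=h$ and $Q_uh$ is continuous, the defining condition of a reconstruction gives $\|Q_uh\|_{L^\infty(\varphi^{\gamma-\eta}wv)}\lesssim u^{\gamma/\ell}$ for all $u\in(0,1]$. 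For fixed $u$ and any integer $n\ge2$, the semigroup property gives $Q_uh=Q_{(n-1)u/n}\big(Q_{u/n}h\big)$, and Corollary \ref{proposition:Gtboundedtildew} applied to the outer operator yields
$$
\|Q_uh\|_{L^\infty(wv)}\lesssim\big((n-1)u/n\big)^{-(\gamma-\eta)/\ell}(u/n)^{\gamma/\ell},
$$
which tends to $0$ as $n\to\infty$ because $\gamma>0$. Hence $Q_uh=0$ in $L^\infty(wv)$ for every $u\in(0,1]$, so $\|h\|_{C^{\zeta',Q}(wv)}=0$ and $\Lambda=\Lambda'$.

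Finally, all of the estimates above are bilinear in $(\Pi,f)$, or affine in the case of $\tri\cdot\tri_{\gamma,\eta,v}^\#$; applying them to $\Pi^{(1)}-\Pi^{(2)}$ and $f^{(1)}-f^{(2)}$, with one model frozen in each bilinear term and the bounds $\tri M^{(i)}\tri_{\gamma,w}\le R$, $\tri f^{(i)}\tri_{\gamma,\eta,v}^\#\le R$ used to control the remaining factors, produces the two Lipschitz-type bounds with a constant affine in $R$, and the case of a general model then follows by the approximation announced at the beginning. Beyond the uniqueness argument, the points I expect to require the most care are the weight algebra in the coherence estimate and the verification that the telescoping limit really lands in $C^{\zeta,Q}(wv)$ rather than only in the weaker space $L^\infty(\varphi^{\gamma-\eta}wv)$.
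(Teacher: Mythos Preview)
Your existence argument is essentially the paper's: the coherence estimate you state is exactly inequality \eqref{eq:proof:reconst:coherence} integrated against $Q_s$, and your dyadic telescoping is a discrete version of the paper's two-parameter family $\mcR_s^tf(x)=\int Q_{t-s}(x,y)Q_s(y,\Pi_yf(y))\,dy$ (the paper lets $s\downarrow0$ continuously, then $t\downarrow0$). One presentational point: the sequence $\{R_{2^{-n}}\}$ itself does not converge; what converges is $\{Q_{t-2^{-n}}R_{2^{-n}}\}_n$ for each fixed $t$, which is the paper's $\mcR_{2^{-n}}^tf$. Your sketch is consistent with this once ``the telescoping'' is read correctly.

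Your uniqueness argument, however, takes a genuinely different and more elementary route. The paper works in the weighted Besov scale $C^{\cdot,Q}(\tilde w)$ with $\tilde w=\varphi^{\gamma-\eta}wv$ and uses the continuity estimate $\|(Q_t-\id)g\|_{C^{\zeta-\varepsilon,Q}(\tilde w)}\lesssim t^{\varepsilon/\ell}\|g\|_{C^{\zeta,Q}(\tilde w)}$ of Proposition~\ref{proposition:tildew:importantfacts}-\ref{proposition:tildew:importantfacts:contit}, together with the embeddings \ref{proposition:tildew:importantfacts:embedding}--\ref{proposition:tildew:importantfacts:embeddingwtildew}, to conclude $g=0$. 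You bypass this machinery entirely: from $\|Q_{u/n}h\|_{L^\infty(\varphi^{\gamma-\eta}wv)}\lesssim(u/n)^{\gamma/\ell}$ and the semigroup identity $Q_uh=Q_{(n-1)u/n}Q_{u/n}h$, Corollary~\ref{proposition:Gtboundedtildew} gives $\|Q_uh\|_{L^\infty(wv)}\lesssim u^{\eta/\ell}\cdot n^{-\gamma/\ell}\to0$, so $Q_uh=0$ for all $u$ and hence $h=0$ (the last implication holds because the formula $\|h\|_{C^{\zeta,Q}(wv)}=\sup_u u^{-\zeta/\ell}\|Q_uh\|_{L^\infty(wv)}$ persists on the completion, as follows from the $1$-Lipschitz continuity of each $h\mapsto u^{-\zeta/\ell}\|Q_uh\|$). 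Your argument is shorter and uses only Corollary~\ref{proposition:Gtboundedtildew}; the paper's route, on the other hand, develops the full scale $C^{\cdot,Q}(\varphi^\beta w)$, which is of independent interest and is reused elsewhere in the paper.
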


\begin{proof} 
The proof is carried out by a method similar to that of \cite[Theorem 4.1]{Ho23}, but we have to treat the temporal weight more carefully.
For $t>0$ and $0<s\le t\wedge1$, we define the functions
\begin{align*}
\mcR_s^t f(x):=
\left\{
\begin{aligned}
&\int_{\bbR^d}Q_{t-s}(x,y)Q_s\big(y,\Pi_yf(y)\big)dy,\quad&&s<t,\\
&Q_t\big(x,\Pi_xf(x)\big),&&s=t.
\end{aligned}
\right.
\end{align*}
Note that
\begin{align*}
(wv)(x)\big|Q_t\big(x,\Pi_xf(x)\big)\big|
&\le\sum_{\alpha<\gamma}w(x)\big\|Q_t\big(x,\Pi_x(\cdot)\big)\big\|_{\bfT_\alpha^*}
v(x)\|f(x)\|_\alpha\\
&\le\|\Pi\|_{\gamma,w} \lp f\rp_{\gamma,\eta,v}\sum_{\alpha<\gamma}
t^{\alpha/\ell}\varphi (x)^{(\eta-\alpha)\wedge0}.
\end{align*}
Thus, by Proposition \ref{proposition:tildew:importantfacts}-\ref{proposition:tildew:importantfacts:Qtregularize}, for any $s\in(0,t)$ we have $\mcR_s^tf\in C(wv)$ and
\begin{equation}\label{eq:proof:reconst:initial}
\|\mcR_s^tf\|_{L^\infty(wv)}
\lesssim\|\Pi\|_{\gamma,w} \lp f\rp_{\gamma,\eta,v}\sum_{\alpha<\gamma}
s^{\alpha/\ell}(t-s)^{(\eta-\alpha)\wedge0}.
\end{equation}
We separate the proof into four steps.

\medskip

\noindent
{\bf (1) Cauchy property.}\ Set $F_x:=\Pi_xf(x)$. By the definition of norms, we have
\begin{align}\label{eq:proof:reconst:coherence}
\begin{aligned}
&(wv)(y)|Q_t(x,F_y-F_x)|\\
&=(wv)(y)\big|Q_t\big(x,\Pi_x\big\{\Gamma_{xy}f(y)-f(x)\big\}\big)\big|\\
&\le w^*(y-x)\sum_{\alpha<\gamma}w(x)\|Q_t(x,\Pi_x(\cdot))\|_{\bfT_\alpha^*}
v(y)\|\Gamma_{xy}f(y)-f(x)\|_\alpha\\
&\le\|\Pi\|_{\gamma,w} \|f\|_{\gamma,\eta,v}^\# (w^*v^*)(y-x) 
\sum_{\alpha<\gamma} t^{\alpha/\ell}\varphi (x,y)^{\eta-\gamma}\|y-x\|_\mfs^{\gamma-\alpha}.
\end{aligned}
\end{align}
By the semigroup property, for any $0<u<s<t\wedge1$ we have

\begin{align*}
&(wv)(x)|\mcR_s^tf(x)-\mcR_u^tf(x)|\\
&\le \int_{(\bbR^d)^2}(w^*v^*)(x-y)(wv)(y)|Q_{t-s}(x,y)Q_{s-u}(y,z)Q_u(z,F_y-F_z)| dydz\\
&\lesssim  \|\Pi\|_{\gamma,w} \|f\|_{\gamma,\eta,v}^\#\sum_{\alpha<\gamma}u^{\alpha/\ell} \int_{(\bbR^{d})^2} (w^*v^*) (x-y) (w^*v^*)(y-z) \\
&\hspace{100pt}\times G_{t-s}(x-y)G_{s-u}(y-z)
    \varphi (y,z)^{\eta-\gamma}\|y-z\|_{\mfs}^{\gamma-\alpha} dydz.
\end{align*}

By applying the second inequality of Lemma \ref{lemma:molifysingularity} to the integral with respect to $z$
and then applying the first inequality of Lemma \ref{lemma:molifysingularity} to the integral with respect to $y$, we obtain
\begin{align*}
&(wv)(x)|\mcR_s^tf(x)-\mcR_u^tf(x)|\\
&\lesssim  \|\Pi\|_{\gamma,w} \|f\|_{\gamma,\eta,v}^\# \\
& \hspace{30pt} \times  \sum_{\alpha<\gamma}u^{\alpha/\ell} 
(s-u)^{(\gamma-\alpha)/\ell}
\int_{(\bbR^{d})^2} (w^*v^*) (x-y) G_{t-s}(x-y)\varphi (y)^{\eta-\gamma}dy\\
&\lesssim  \|\Pi\|_{\gamma,w} \|f\|_{\gamma,\eta,v}^\#\sum_{\alpha<\gamma}u^{\alpha/\ell} 
(s-u)^{(\gamma-\alpha)/\ell}\varphi (x)^{\eta-\gamma}.
\end{align*}
Consequently, when $u\in[s/2,s)$ we have the inequality
\begin{align}\label{eq:singularreconstructioncauchy}
(wv)(x)|\mcR_s^tf(x)-\mcR_u^tf(x)|
\lesssim\|\Pi\|_{\gamma,w} \|f\|_{\gamma,\eta,v}^\#\,\varphi (x)^{\eta-\gamma}s^{\gamma/\ell}.
\end{align}
Similarly to the proof of \cite[Theorem 4.1]{Ho23}, we can also extend it into $u \in (0,s/2)$ by decomposing
\begin{align*}
|\mcR_s^tf (x)-\mcR_u^tf(x)|
&\le\sum_{n=0}^\infty| \mcR_{(s/2^n)\wedge u}^tf(x)-\mcR_{(s/2^{n+1})\wedge u}^tf(x)|.
\end{align*}
The same inequality for the case $s=t\le1$ can be obtained by a similar argument.
In the end, the inequality \eqref{eq:singularreconstructioncauchy} holds for any $0<u<s\le t\wedge1$.

\medskip

\noindent
{\bf (2) Convergence as $s\downarrow0$.}\ 
Note that $Q_s\mcR_u^tf=\mcR_u^{t+s}f$ follows from the semigroup property.
By the inequality \eqref{eq:singularreconstructioncauchy}, for any $0<u<s\le t/2$ we have
\begin{align}\label{eq:proof:reconst:cauchy}
\begin{aligned}
&(wv)(x)|\mcR_s^tf(x)-\mcR_u^tf(x)|\\
&\le\int_{\bbR^d}(w^* v^*)(x-y)(wv)(y)|Q_{t/2}(x,y)(\mcR_s^{t/2}f-\mcR_u^{t/2}f)(y)|dy\\
&\lesssim\|\Pi\|_{\gamma,w} \|f\|^{\#}_{\gamma,\eta,v}\, s^{\gamma/\ell}
\int_{\bbR^d}(w^* v^*)(x-y)G_{t/2}(x-y)\varphi (y)^{\eta-\gamma}dy\\
&\lesssim\|\Pi\|_{\gamma,w} \|f\|^{\#}_{\gamma,\eta,v}\, s^{\gamma/\ell}t^{(\eta-\gamma)/\ell}.
\end{aligned}
\end{align}
Since $\gamma>0$, this implies that $\{\mcR_s^tf\}_{0<s\le t/2}$ is Cauchy in $C(wv)$ as $s\downarrow0$. We denote its limit by
$$
\mcR_0^tf:=\lim_{s\downarrow0}\mcR_s^tf.
$$
We also have $Q_s\mcR_0^tf=\mcR_0^{t+s}f$ by taking the limit $u\downarrow0$ in $Q_s\mcR_u^tf=\mcR_u^{t+s}f$.

\medskip

\noindent
{\bf (3) Convergence as $t\downarrow0$.}\ 
Combining the Cauchy property \eqref{eq:proof:reconst:cauchy} and the bound \eqref{eq:proof:reconst:initial} with $s=t/2$, we have
\begin{align*}
\begin{aligned}
\|\mcR_0^tf\|_{L^\infty(wv)}
&\le\|\mcR_{t/2}^tf\|_{L^\infty(wv)} +\|\mcR_{t/2}^tf-\mcR_0^tf\|_{L^\infty(wv)}\\
&\lesssim\|\Pi\|_{\gamma,w} \tri f\tri_{\gamma,\eta,v}^\#\, t^{(\eta\wedge\alpha_0)/\ell}.
\end{aligned}
\end{align*}
Since $Q_s\mcR_0^tf=\mcR_0^{t+s}f$, this implies
$$
\sup_{0<t\le 1}\|\mcR_0^tf\|_{C^{\eta\wedge\alpha_0\wedge0,Q}(wv)}
\lesssim\|\Pi\|_{\gamma,w} \tri f\tri_{\gamma,\eta,v}^\#.
$$
From here onward, in exactly the same way as the part (4) of the proof of \cite[Theorem 4.1]{Ho23}, we can show the existence of $\mcR f\in C^{\zeta,Q}(wv)$ with $\zeta=\eta\wedge\alpha_0\wedge0$ which satisfies the bound \eqref{besovreconstruction} and
$$
\lim_{t\downarrow0}\|\mcR f-\mcR_0^t f\|_{C^{\zeta-\varepsilon,Q}(wv)}=0
$$
for any $\varepsilon\in(0,\ell]$. Moreover, we have $Q_t\mcR f=\mcR_0^tf$ by taking the limit $s\downarrow0$ in $Q_t\mcR_0^sf=\mcR_0^{t+s}f$.
We have another bound \eqref{besovreconstructioncharacterize} by letting $u\downarrow0$ and $s=t$ in the inequality \eqref{eq:singularreconstructioncauchy}.

\medskip

\noindent
{\bf (4) Uniqueness.}\
Let $\Lambda,\Lambda'\in C^{\zeta,Q}(wv)$ be reconstructions of $f$ for $M$. 
By the property of reconstruction, $g:=\Lambda-\Lambda'$ satisfies
\begin{equation*}
\sup_{x\in\bbR^d}\varphi (x)^{\gamma-\eta}(wv)(x)|Q_tg(x)|\lesssim t^{\gamma/\ell}.
\end{equation*}
Set $\tilde{w}:=\varphi ^{\gamma-\eta}wv$.
By Proposition \ref{proposition:tildew:importantfacts}-\ref{proposition:tildew:importantfacts:embeddingwtildew} and \ref{proposition:tildew:importantfacts:contit}, for any $\varepsilon\in(0,\ell]$ we have
\begin{align*}
    \|g\|_{C^{\zeta-\varepsilon,Q}(\tilde{w})} 
    &\le \|(Q_t -\id)g\|_{C^{\zeta-\varepsilon,Q}(\tilde{w})} 
    + \|Q_tg\|_{C^{\zeta-\varepsilon,Q}(\tilde{w})}\\
    &\lesssim t^{\varepsilon/\ell}\|g\|_{C^{\zeta,Q}(\tilde{w}) }
    + \|Q_tg\|_{L^\infty(\tilde{w})}\\
    &\lesssim t^{\varepsilon/\ell}\|g\|_{C^{\zeta,Q}(wv) }
    + t^{\gamma/\ell}.
\end{align*}
By taking the limit $t \downarrow 0$, we have $g=0$ in $C^{\zeta-\varepsilon,Q}(\tilde{w})$.
By Proposition \ref{proposition:tildew:importantfacts}-\ref{proposition:tildew:importantfacts:embedding} and \ref{proposition:tildew:importantfacts:embeddingwtildew}, we also have $g=0$ in $C^{\zeta,Q}(wv)$.
\end{proof}

The following result is used in Section \ref{section:application}.

\begin{prop}\label{proposition:reconstsmooth}
In addition to the setting of Theorem \ref{theorem:singularreconstruction}, we assume that the model $M$ is smooth in the sense of Definition \ref{def:model} and
$$
\sup_{x\in\bbR^d}\sup_{\tau\in\bfT_\alpha\setminus\{0\}}w(x)\frac{|(\Pi_x\tau)(x)|}{\|\tau\|_\alpha}<\infty
$$
for any $\alpha\in\bfA$.
Then the reconstruction $\mcR f$ of $f\in\mcD_v^{\gamma,\eta}(\Gamma)^\#$ is realized as a continuous function on $(\bbR\setminus\{0\})\times\bbR^{d-1}$ such that
$$
(\mcR f)(x)=\big(\Pi_xf(x)\big)(x)
$$
for any $x\in(\bbR\setminus\{0\})\times\bbR^{d-1}$.
\end{prop}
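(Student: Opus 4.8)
The plan is to show that the continuous function $g(x) := (\Pi_x f(x))(x)$ is itself a reconstruction of $f$, and then invoke the uniqueness part of Theorem \ref{theorem:singularreconstruction} to conclude $\mcR f = g$. First I would verify that $g$ is well-defined and continuous on $(\bbR\setminus\{0\})\times\bbR^{d-1}$: the extra hypothesis $\sup_x \sup_\tau w(x)|(\Pi_x\tau)(x)|/\|\tau\|_\alpha < \infty$ together with $f \in \mcD_v^{\gamma,\eta}(\Gamma)^\#$ gives the pointwise bound $(wv)(x)|g(x)| \lesssim \|\Pi\|_{\gamma,w}\lp f\rp_{\gamma,\eta,v}\sum_{\alpha<\gamma}\varphi(x)^{(\eta-\alpha)\wedge 0}$, so $g \in L^\infty(wv)$ away from the hyperplane; continuity follows from the joint continuity of $(x,y)\mapsto (\Pi_x\tau)(y)$ for smooth models (since $\Pi_x\tau \in C(w)$) and the continuity of $x\mapsto f(x)$ and $x \mapsto \Gamma_{xy}$ implicit in the modelled distribution bounds, using $\Pi_x f(x) = \Pi_y \Gamma_{yx} f(x)$ to localise. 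I would also check that $g$ lies in $C^{\zeta,Q}(wv)$, which amounts to checking the $Q_t$-regularity bound, but this will actually come out of the reconstruction estimate once we know $g$ reconstructs $f$.

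The heart of the argument is to bound $Q_t(x, g - \Pi_x f(x))$ by $t^{\gamma/\ell}\varphi(x)^{\eta-\gamma}(wv)(x)^{-1}$, i.e.\ to verify $\lb g \rb_{\gamma,\eta,wv} < \infty$. Writing $h_x := g - \Pi_x f(x)$, one has for $y$ near $x$ that $h_x(y) = (\Pi_y f(y))(y) - (\Pi_y \Gamma_{yx} f(x))(y) = (\Pi_y \Delta^\Gamma_{yx} f)(y)$. The plan is then to estimate
\begin{align*}
(wv)(x)|Q_t(x, h_x)| \le \int_{\bbR^d} (w^*v^*)(x-y)\, G_t(x-y)\, (wv)(y)\,|h_x(y)|\, dy,
\end{align*}
splitting the bound on $(wv)(y)|h_x(y)| = (wv)(y)|(\Pi_y\Delta^\Gamma_{yx}f)(y)|$ via the pointwise hypothesis on $\Pi$ and the $\#$-norm $\|f\|^\#_{\gamma,\eta,v}$, which controls $\|\Delta^\Gamma_{yx}f\|_\alpha$ for \emph{all} $x \ne y$ by $v(y)^{-1}v^*(x-y)\varphi(x,y)^{\eta-\gamma}\|y-x\|_\mfs^{\gamma-\alpha}$. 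This yields an integrand of the form $(w^*v^*)(x-y)^2 G_t(x-y)\varphi(x,y)^{\eta-\gamma}\|x-y\|_\mfs^{\gamma-\alpha}$ (up to the $\alpha$-sum and the model norms), and Lemma \ref{lemma:molifysingularity} — specifically its second inequality with exponent $\beta = \gamma - \eta \in [0,\mfs_1)$, which is exactly the range permitted since $\eta \in (\gamma - \mfs_1, \gamma]$ — gives the bound $t^{(\gamma-\alpha)/\ell}\varphi(x)^{\eta-\gamma}$, and summing over $\alpha < \gamma$ produces the required $t^{\gamma/\ell}\varphi(x)^{\eta-\gamma}$ (using $t \le 1$ so the smallest power $t^{(\gamma-\alpha)/\ell}$ dominates... rather, one keeps all terms since $\gamma - \alpha > 0$). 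One should also note $(w^*v^*)^2$ is harmless after absorbing: more precisely one uses that $w^*, v^*$ are the control functions appearing in \eqref{weightintegrable} so that $(w^*v^*)(\cdot)G_t(\cdot)$ still satisfies the hypotheses of Lemma \ref{lemma:molifysingularity} with $w$ replaced by $wv$ (as noted after Corollary \ref{proposition:Gtboundedtildew}).

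With $\lb g\rb_{\gamma,\eta,wv} < \infty$ established, $g$ is a reconstruction of $f$ in the sense of Definition \ref{def:besovreconst}, provided we also know $g \in C^{\zeta,Q}(wv)$. For this I would use that $Q_t g = Q_t h_x + Q_t(\Pi_x f(x))$ pointwise near a base point, bounding the first term by the estimate just obtained and the second by the computation in the proof of Theorem \ref{theorem:singularreconstruction} (display \eqref{eq:proof:reconst:initial} with $s = t$), which together give $t^{-\zeta/\ell}\|Q_t g\|_{L^\infty(wv)} \lesssim \|\Pi\|_{\gamma,w}\tri f\tri^\#_{\gamma,\eta,v}$; since $g$ is continuous and, by these bounds, in the completion $C^{\zeta,Q}(wv)$, Theorem \ref{theorem:singularreconstruction} forces $\mcR f = g$. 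The main obstacle I anticipate is the bookkeeping around the region $\|y-x\|_\mfs > \varphi(x,y)$: the identity $h_x(y) = (\Pi_y\Delta^\Gamma_{yx}f)(y)$ holds for all $y$, but one must be careful that the $\#$-norm (rather than the plain $\|f\|_{\gamma,\eta,v}$) is genuinely needed there and that $\varphi(x,y)^{\eta-\gamma}$ versus $\varphi(x)^{\eta-\gamma}$ is handled correctly — this is precisely why the statement is phrased for $f \in \mcD_v^{\gamma,\eta}(\Gamma)^\#$ and why Lemma \ref{lemma:molifysingularity}'s second (rather than first) inequality is the right tool.
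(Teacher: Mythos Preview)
Your overall strategy is exactly the paper's: set $\Lambda(x) = (\Pi_x f(x))(x)$, verify it is a reconstruction, and invoke uniqueness. The core computation for $\lb\Lambda\rb_{\gamma,\eta,wv}$ via Lemma~\ref{lemma:molifysingularity} is also the same. However, one ingredient is missing and it causes two concrete problems.

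The missing observation is that $(\Pi_x\tau)(x) = 0$ for every $\tau \in \bfT_\alpha$ with $\alpha > 0$. This follows from the smooth-model hypothesis: since $\Pi_x\tau \in C(w)$, property~\eqref{Qtfconvergestof} gives $(\Pi_x\tau)(x) = \lim_{t\downarrow 0} Q_t(x,\Pi_x\tau)$, and the model bound $w(x)|Q_t(x,\Pi_x\tau)| \lesssim t^{\alpha/\ell}$ forces the limit to vanish when $\alpha > 0$. With this in hand, every sum over $\alpha$ in the argument is actually over $\alpha \le 0$ only.

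Without that restriction your reconstruction bound fails. After applying Lemma~\ref{lemma:molifysingularity} you obtain $\sum_{\alpha<\gamma} t^{(\gamma-\alpha)/\ell}\varphi(x)^{\eta-\gamma}$, but for $t\le 1$ one has $t^{(\gamma-\alpha)/\ell} \le t^{\gamma/\ell}$ only when $\alpha \le 0$; any $\alpha\in(0,\gamma)$ contributes a term strictly larger than $t^{\gamma/\ell}$, so the sum does not produce $\lb\Lambda\rb_{\gamma,\eta,wv} < \infty$. Your parenthetical ``one keeps all terms since $\gamma-\alpha > 0$'' does not rescue this.

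The same observation also resolves the membership $\Lambda \in C^{\zeta,Q}(wv)$, which is where your decomposition $Q_t\Lambda = Q_t h_x + Q_t(\Pi_x f(x))$ breaks down: both pieces carry a factor $\varphi(x)^{\eta-\gamma}$ or $\varphi(x)^{(\eta-\alpha)\wedge 0}$ that blows up near the hyperplane, so no bound uniform in $x$ is available this way. The paper instead bounds $\Lambda$ pointwise. With the sum restricted to $\alpha \le 0$ one gets $(wv)(x)|\Lambda(x)| \lesssim \sum_{\alpha\le 0}\varphi(x)^{(\eta-\alpha)\wedge 0}\lesssim \varphi(x)^{\eta\wedge 0}$, and since $\eta > \gamma - \mfs_1 > -\mfs_1$, Corollary~\ref{proposition:Gtboundedtildew} (with $\beta=-(\eta\wedge 0)$) then yields $\Lambda \in C^{\eta\wedge 0,Q}(wv) \subset C^{\zeta,Q}(wv)$.
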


\begin{proof}
Set $\Lambda(x)=\big(\Pi_xf(x)\big)(x)$. 
Since $(\Pi_x\tau)(x)=\lim_{t\downarrow0}Q_t(x,\Pi_x\tau)=0$ if $\tau\in\bfT_\alpha$ with $\alpha>0$, we have
\begin{align*}
(wv)(x)|\Lambda(x)|&\le\sum_{\alpha\le0}w(x)\big\|\big(\Pi_x(\cdot)\big)(x)\big\|_{\bfT_\alpha^*}v(x)\|f(x)\|_\alpha\\
&\lesssim\sum_{\alpha\le0}\varphi (x)^{(\eta-\alpha)\wedge0}
\lesssim\varphi (x)^{\eta\wedge0}.
\end{align*}
Since $\eta>-\mfs_1$, we have $\Lambda\in C^{\eta\wedge0,Q}(wv)\subset C^{\zeta,Q}(wv)$ by Corollary \ref{proposition:Gtboundedtildew}. Moreover, since
\begin{align*}
&(wv)(x)|Q_t(x,\Lambda_x)|
=(wv)(x)\bigg|\int_{\bbR^d}Q_t(x,y)\Pi_y\big(f(y)-\Gamma_{yx}f(x)\big)(y)dy\bigg|\\
&\lesssim\sum_{\alpha\le0}\int_{\bbR^d}w^*(x-y)G_t(x-y)
w(y)\big\|\big(\Pi_y(\cdot)\big)(y)\big\|_{\bfT_\alpha^*}
v(x)\|f(y)-\Gamma_{yx}f(x)\|_\alpha
dy\\
&\lesssim\sum_{\alpha\le0}\int_{\bbR^d}(w^*v^*)(x-y)G_t(x-y)\|y-x\|_\mfs^{\gamma-\alpha}\varphi (x,y)^{\eta-\gamma}dy\\
&\lesssim\sum_{\alpha\le0}t^{(\gamma-\alpha)/\ell}\varphi (x)^{\eta-\gamma}
\lesssim t^{\gamma/\ell}\varphi (x)^{\eta-\gamma},
\end{align*}
we have $\lb\Lambda\rb_{\gamma,\eta,wv}<\infty$.
Hence $\mcR f=\Lambda$ by the uniqueness of the reconstruction.
\end{proof}

Combining Theorem \ref{theorem:singularreconstruction} with Proposition \ref{def:variantsnorms}-\ref{def:variantsnorms1}, we have the following result.

\begin{cor}\label{corollary:singularreconstruction}
Assume that $w^2v$ is also $G$-controlled. 
If $\gamma>0$ and $\eta\wedge\alpha_0\in(\gamma-\mfs_1,\gamma]$, then for any $M=(\Pi,\Gamma)\in\scM_{w}(\scT)$ and $f\in\mcD_{v}^{\gamma,\eta}(\Gamma)$, there exists a unique reconstruction $\mcR f\in C^{\eta\wedge\alpha_0\wedge0,Q}(w^2v)$ of $f$ for $M$ and it holds that
\begin{align*}
\|\mcR f\|_{C^{\eta\wedge\alpha_0\wedge0,Q}(w^2v)}&\lesssim\|\Pi\|_{\gamma,w}(1+\|\Gamma\|_{\gamma,w})\tri f\tri_{\gamma,\eta,v},\\
\lb\mcR f\rb_{\gamma,\eta\wedge\alpha_0,w^2v}&\lesssim \|\Pi\|_{\gamma,w}(1+\|\Gamma\|_{\gamma,w})\|f\|_{\gamma,\eta,v}.
\end{align*}
The local Lipschitz estimates similar to the latter part of Theorem \ref{theorem:singularreconstruction} also hold.
\end{cor}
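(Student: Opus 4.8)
The plan is to obtain this as a corollary of Theorem~\ref{theorem:singularreconstruction} by a change of parameters, with Proposition~\ref{def:variantsnorms}-\ref{def:variantsnorms1} bridging the given class $\mcD_v^{\gamma,\eta}(\Gamma)$ and the sharp subclass $\mcD_{wv}^{\gamma,\eta\wedge\alpha_0}(\Gamma)^\#$ to which the theorem applies directly. First I would record, using Proposition~\ref{def:variantsnorms}-\ref{def:variantsnorms1}, that the given $f$ belongs to $\mcD_{wv}^{\gamma,\eta\wedge\alpha_0}(\Gamma)^\#$ with
$$
\|f\|_{\gamma,\eta\wedge\alpha_0,wv}^\#\lesssim(1+\|\Gamma\|_{\gamma,w})\lp f\rp_{\gamma,\eta,v}+\|f\|_{\gamma,\eta,v},
$$
while the trivial bounds $wv\le v$ and $\varphi\le1$ (together with $\eta\wedge\alpha_0\le\eta$) give $\lp f\rp_{\gamma,\eta\wedge\alpha_0,wv}\le\lp f\rp_{\gamma,\eta,v}$; hence $\tri f\tri_{\gamma,\eta\wedge\alpha_0,wv}^\#\lesssim(1+\|\Gamma\|_{\gamma,w})\tri f\tri_{\gamma,\eta,v}$.

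Next I would apply Theorem~\ref{theorem:singularreconstruction} with $\eta$ and $v$ there replaced by $\eta\wedge\alpha_0$ and $wv$ respectively. The hypotheses required for this substitution are met exactly by the assumptions of the corollary: the requirement $\eta\wedge\alpha_0\in(\gamma-\mfs_1,\gamma]$ is the admissible range for the new lower index (and $\gamma>0$ is unchanged), while the assumption that $w^2v=w\cdot(wv)$ is $G$-controlled is precisely the condition on the product of the two weights. The theorem then produces a unique reconstruction $\mcR f\in C^{\zeta,Q}(w^2v)$ of $f$ for $M$ with $\zeta=(\eta\wedge\alpha_0)\wedge\alpha_0\wedge0=\eta\wedge\alpha_0\wedge0$, and the estimates \eqref{besovreconstruction}--\eqref{besovreconstructioncharacterize} become
$$
\|\mcR f\|_{C^{\zeta,Q}(w^2v)}\lesssim\|\Pi\|_{\gamma,w}\,\tri f\tri_{\gamma,\eta\wedge\alpha_0,wv}^\#,\qquad
\lb\mcR f\rb_{\gamma,\eta\wedge\alpha_0,w^2v}\lesssim\|\Pi\|_{\gamma,w}\,\|f\|_{\gamma,\eta\wedge\alpha_0,wv}^\#.
$$
Substituting the bounds from the first step yields the two displayed inequalities of the corollary (the characterization estimate comes out dominated by $\|\Pi\|_{\gamma,w}\bigl((1+\|\Gamma\|_{\gamma,w})\lp f\rp_{\gamma,\eta,v}+\|f\|_{\gamma,\eta,v}\bigr)$, hence a fortiori by $\|\Pi\|_{\gamma,w}(1+\|\Gamma\|_{\gamma,w})\tri f\tri_{\gamma,\eta,v}$). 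The local Lipschitz estimates are obtained by the same reduction, applying a difference analogue of Proposition~\ref{def:variantsnorms}-\ref{def:variantsnorms1} to control $\tri f^{(1)};f^{(2)}\tri_{\gamma,\eta\wedge\alpha_0,wv}^\#$ and then the Lipschitz part of Theorem~\ref{theorem:singularreconstruction}.

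Since the whole argument is a reduction, there is no genuine analytic obstacle; the points that need attention are organizational: keeping straight which weight ($v$, $wv$, or $w^2v$) and which lower index ($\eta$ or $\eta\wedge\alpha_0$) occurs in each seminorm, checking that the passage to the sharp class costs only the harmless factor $1+\|\Gamma\|_{\gamma,w}$, and remembering that Proposition~\ref{def:variantsnorms}-\ref{def:variantsnorms1} is invoked here, so the symmetry hypothesis on $w^*$ under which it was established should be in force.
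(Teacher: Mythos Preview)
Your proposal is correct and follows exactly the route the paper indicates: the paper's proof consists of the single sentence ``Combining Theorem~\ref{theorem:singularreconstruction} with Proposition~\ref{def:variantsnorms}-\ref{def:variantsnorms1}, we have the following result,'' and your argument unpacks precisely this combination, with the substitutions $\eta\leadsto\eta\wedge\alpha_0$ and $v\leadsto wv$. Your observation that the second displayed estimate naturally comes out with $\tri f\tri_{\gamma,\eta,v}$ rather than $\|f\|_{\gamma,\eta,v}$ on the right is accurate (Proposition~\ref{def:variantsnorms}-\ref{def:variantsnorms1} does inject a $\lp f\rp$ term), and your note about the implicit symmetry assumption on $w^*$ is also on point.
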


\section{Multilevel Schauder estimate}\label{section:schauder}

This section is devoted to the proof of the multilevel Schauder estimate for singular modelled distributions.
After recalling from \cite{Ho23} the basics of regularizing kernels in the first subsection, we prove the multilevel Schauder estimate in the second subsection.

\subsection{Regularizing kernels}

We recall from \cite[Section 5.1]{Ho23} the definition of regularizing kernels.

\begin{defi}\label{asmp2}
Let $\bar\beta>0$.
A \emph{$\bar\beta$-regularizing (integral) kernel admissible for $\{Q_t\}_{t>0}$} is a family of continuous functions $\{K_t:\bbR^d\times\bbR^d\to\bbR\}_{t>0}$ which satisfies the following properties for some constants $\delta>0$ and $C_K>0$.
\begin{enumerate}
\renewcommand{\theenumi}{(\roman{enumi})}
\renewcommand{\labelenumi}{(\roman{enumi})}
\item\label{asmp2:convolutionwithQ}
(Convolution with $Q$)
For any $0<s<t$ and $x,y\in\bbR^d$,
$$
\int_{\bbR^d}K_{t-s}(x,z)Q_s(z,y)dz=K_t(x,y).
$$
\item\label{asmp2:gauss}
(Upper estimate)
For any ${\bf k}\in\bbN^d$ with $|{\bf k}|_\mfs<\delta$, the ${\bf k}$-th partial derivative of $K_t(x,y)$ with respect to $x$ exists, and we have for any $t>0$ and $x,y\in\bbR^d$,
$$
|\partial_x^{\bf k}K_t(x,y)|\le C_Kt^{(\bar\beta-|{\bf k}|_\mfs)/\ell-1}G_t(x-y).
$$
\item\label{asmp2:holder}
(H\"older continuity)
For any ${\bf k}\in\bbN^d$ with $|{\bf k}|_\mfs<\delta$, any $t>0$ and $x,y,h\in\bbR^d$ with $\|h\|_{\mfs}\le t^{1/\ell}$,
\begin{align*}
&\bigg|\partial_x^{\bf k}K_t(x+h,y)-\sum_{|{\bf l}|_\mfs<\delta-|{\bf k}|_\mfs}\frac{h^{\bf l}}{{\bf l}!}\partial_x^{{\bf k}+{\bf l}}K_t(x,y)\bigg|\\
& \le C_K\|h\|_\mfs^{\delta-|{\bf k}|_\mfs}\,  t^{(\bar\beta-\delta)/\ell-1}G_t(x-y).
\end{align*}
\end{enumerate}
\end{defi}


We fix a $\bar\beta$-regularizing kernel $\{K_t\}_{t>0}$ throughout this section.
For any $f\in L^\infty(w)$ with a $G$-controlled weight $w$ and any $|{\bf k}|_{\mfs}<\delta$, we define
$$
(\partial^{\bf k}K_tf)(x):=:\partial^{\bf k}K_t(x,f):=\int_{\bbR^d}\partial_x^{\bf k}K_t(x,y)f(y)dy.
$$
Moreover, we write $\partial^{\bf k}Kf:=\int_0^1\partial^{\bf k}K_tf dt$ if the integral makes sense.

\begin{lem}\label{lemma:whyregularize}
Let $w$ and $v$ be $G$-controlled weights such that $w^2$ and $wv$ are also $G$-controlled.
Let $\scT=(\bfA,\bfT,\bfG)$ be a regularity structure
and let $M=(\Pi,\Gamma)\in\scM_w(\scT)$.
\begin{enumerate}
\renewcommand{\theenumi}{(\roman{enumi})}
\renewcommand{\labelenumi}{(\roman{enumi})}
\item\label{lemma:whyregularize1}
\cite[Lemma 5.4]{Ho23}
For any $\alpha\le0$, $|{\bf k}|_{\mfs}<\delta$, and $f\in L^\infty(w)$, we have
$$
\|\partial^{\bf k}K_tf\|_{L^\infty(w)}\lesssim C_K\,t^{(\alpha+\bar\beta-|{\bf k}|_\mfs)/\ell-1}\|f\|_{C^{\alpha,Q}(w)},
$$
where the implicit proportional constant depends only on $G$ and $w$.
Consequently, if $|{\bf k}|_\mfs<(\alpha+\bar\beta)\wedge\delta$, the integral $\partial^{\bf k}Kf:=\int_0^1\partial^{\bf k}K_tfdt$ converges in $C(w)$.

\item\label{lemma:KregularizePi}
\cite[Lemma 5.6]{Ho23}
For any $\alpha<\gamma$, $\tau\in\bfT_\alpha$, $|{\bf k}|_{\mfs}<\delta$, and $t\in(0,1]$, we have
$$
\|\partial^{\bf k}K_t(x,{\Pi}_x\tau)\|_{L_x^\infty(w^2)}
\lesssim C_K\,t^{(\alpha+\bar\beta-|{\bf k}|_{\mfs})/\ell-1}\|\Pi\|_{\gamma,w}(1+\|\Gamma\|_{\gamma,w})\|\tau\|_\alpha,
$$
where the implicit proportional constant depends only on $G,w$, and $\bfA$.
Consequently, if $|{\bf k}|_\mfs<(\alpha+\bar\beta)\wedge\delta$, the integral $\partial^{\bf k}K(x,\Pi_x\tau):=\int_0^1\partial^{\bf k}K_t(x,\Pi_x\tau)dt$ converges for any $x\in\bbR^d$.

\item\label{lemma:KregularizeRf}
Let $\gamma\in\bbR$, $\eta\in(\gamma-\mfs_1,\gamma]$, and $\zeta\le0$.
For any $f\in\mcD_v^{\gamma,\eta}(\Gamma)^\#$ and its reconstruction $\Lambda\in C^{\zeta,Q}(wv)$, $|{\bf k}|_{\mfs}<\delta$, and $t\in(0,1]$, we have
\begin{align*}
& (wv)(x)|\partial^{\bf k} K_t(x,\Lambda_x)|\\
&\lesssim C_K\,t^{(\gamma+\bar\beta-|{\bf k}|_\mfs)/\ell-1}\, \varphi (x)^{\eta-\gamma}
     \big(\lb\Lambda\rb_{\gamma,\eta,wv}+\|\Pi\|_{\gamma,w}\|f\|_{\gamma,\eta,v}^\#\big),
\end{align*}
where the implicit proportional constant depends only on $G,w,v$, and $\bfA$.
Consequently, if $|{\bf k}|_\mfs<(\gamma+\bar\beta)\wedge\delta$, the integral $\partial^{\bf k}K(x,\Lambda_x):=\int_0^1\partial^{\bf k}K_t(x,\Lambda_x)dt$ converges for any $x\in(\bbR\setminus\{0\})\times\bbR^{d-1}$.
\end{enumerate}
\end{lem}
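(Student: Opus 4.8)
The plan is to follow the proof of \cite[Lemma 5.6]{Ho23} (part \ref{lemma:KregularizePi} above), inserting the temporal weight $\varphi$ at the right places and, crucially, exploiting that $f\in\mcD_v^{\gamma,\eta}(\Gamma)^\#$ obeys a \emph{global} coherence bound. First, fixing $x\in(\bbR\setminus\{0\})\times\bbR^{d-1}$ and $t\in(0,1]$, I would use the $\partial^{\bf k}$-version of the convolution property (obtained by differentiating Definition \ref{asmp2}-\ref{asmp2:convolutionwithQ} under the integral sign) to write
$$
\partial^{\bf k}K_t(x,\Lambda_x)=\int_{\bbR^d}\partial_x^{\bf k}K_{t/2}(x,z)\,Q_{t/2}(z,\Lambda_x)\,dz,
$$
where $Q_{t/2}(z,\Lambda_x):=Q_{t/2}(z,\Lambda)-Q_{t/2}(z,\Pi_xf(x))$ is interpreted through the extensions of $Q_{t/2}$ recalled in Remark \ref{remark:Qtextension} and Proposition \ref{proposition:tildew:importantfacts}; this formula also gives meaning to the left-hand side, since $\Lambda$ is a priori only a distribution. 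The role of the time $t/2$ is that Definition \ref{asmp2}-\ref{asmp2:gauss} gives $|\partial_x^{\bf k}K_{t/2}(x,z)|\lesssim C_K\,t^{(\bar\beta-|{\bf k}|_\mfs)/\ell-1}G_{t/2}(x-z)$, so I only need to extract an extra factor $t^{\gamma/\ell}\varphi(x)^{\eta-\gamma}$ from the $z$-integral.

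Next I would recentre the inner $Q$: since $\Pi_x=\Pi_z\Gamma_{zx}$,
$$
Q_{t/2}(z,\Lambda_x)=Q_{t/2}(z,\Lambda_z)-Q_{t/2}\big(z,\Pi_z\Delta^\Gamma_{zx}f\big),\qquad\Lambda_z:=\Lambda-\Pi_zf(z).
$$
For the first summand I use that $\Lambda$ reconstructs $f$: by Definition \ref{def:besovreconst}, $(wv)(z)|Q_{t/2}(z,\Lambda_z)|\lesssim t^{\gamma/\ell}\varphi(z)^{\eta-\gamma}\lb\Lambda\rb_{\gamma,\eta,wv}$ for a.e.\ $z$. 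For the second summand I decompose over $\alpha\in\bfA$ with $\alpha<\gamma$, apply the model bound $w(z)|Q_{t/2}(z,\Pi_z\tau)|\lesssim\|\Pi\|_{\gamma,w}\,t^{\alpha/\ell}\|\tau\|_\alpha$ for $\tau\in\bfT_\alpha$, and then estimate $\|\Delta^\Gamma_{zx}f\|_\alpha$ using $v(x)\|\Delta^\Gamma_{zx}f\|_\alpha\le\|f\|_{\gamma,\eta,v}^\#\,\varphi(x,z)^{\eta-\gamma}\,v^\ast(x-z)\,\|z-x\|_\mfs^{\gamma-\alpha}$ --- this is the one point where the global norm $\|\cdot\|_{\gamma,\eta,v}^\#$, rather than $\|\cdot\|_{\gamma,\eta,v}$, is indispensable, since $z$ runs over all of $\bbR^d$ in the convolution integral and there is no constraint $\|z-x\|_\mfs\le\varphi(x,z)$. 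Multiplying by $(wv)(x)$ and bounding $(wv)(x)\le(wv)^\ast(x-z)(wv)(z)$ (respectively $w(x)\le w^\ast(x-z)w(z)$ for the second summand), both contributions become integrals of the form $\int(w^\ast v^\ast)(x-z)\,\|z-x\|_\mfs^{a}\,\varphi(x,z)^{\eta-\gamma}\,G_{t/2}(x-z)\,dz$ with $a\ge0$ (with $\varphi(z)$ in place of $\varphi(x,z)$ and $a=0$ for the reconstruction term), which are controlled by Lemma \ref{lemma:molifysingularity} applied to the $G$-controlled weight $wv$ --- its first inequality for the reconstruction term, its second for the coherence term. Here the hypothesis $\eta\in(\gamma-\mfs_1,\gamma]$ is used precisely so that $\gamma-\eta\in[0,\mfs_1)$ falls in the admissible range of that lemma. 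Collecting powers of $t$ --- the kernel gives $(\bar\beta-|{\bf k}|_\mfs)/\ell-1$, the reconstruction term contributes an extra $\gamma/\ell$, and in the coherence term the $t^{\alpha/\ell}$ from the model cancels against the $t^{(\gamma-\alpha)/\ell}$ from Lemma \ref{lemma:molifysingularity} to leave $t^{\gamma/\ell}$ for every $\alpha$ --- one obtains the asserted estimate after summing over the finitely many $\alpha<\gamma$. Finally, when $|{\bf k}|_\mfs<(\gamma+\bar\beta)\wedge\delta$ the exponent $(\gamma+\bar\beta-|{\bf k}|_\mfs)/\ell-1$ exceeds $-1$, so $\int_0^1t^{(\gamma+\bar\beta-|{\bf k}|_\mfs)/\ell-1}dt<\infty$ and, for each fixed $x$ with $x_1\neq0$ (so that $(wv)(x)>0$ and $\varphi(x)>0$), the integral $\partial^{\bf k}K(x,\Lambda_x)=\int_0^1\partial^{\bf k}K_t(x,\Lambda_x)dt$ converges absolutely.

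The step I expect to be the main obstacle is the recentring: one is forced to replace the basepoint $x$ of $\Lambda_x$ by the running point $z$ in order to be entitled to invoke $\lb\Lambda\rb_{\gamma,\eta,wv}$, and the coherence error $Q_{t/2}(z,\Pi_z\Delta^\Gamma_{zx}f)$ this produces must then be estimated \emph{uniformly over all} $z$; this is the structural reason why the lemma is stated for $\mcD_v^{\gamma,\eta}(\Gamma)^\#$ and why the combination $\lb\Lambda\rb_{\gamma,\eta,wv}+\|\Pi\|_{\gamma,w}\|f\|_{\gamma,\eta,v}^\#$ appears on the right-hand side. The remaining delicate point is pure bookkeeping: propagating the singular factor $\varphi$ through the two distinct estimates of Lemma \ref{lemma:molifysingularity} --- the plain $\varphi(x)^{\eta-\gamma}\wedge t^{(\eta-\gamma)/\ell}$ bound for the reconstruction piece and the $\varphi(x,z)^{\eta-\gamma}$-weighted bound for the coherence piece --- and checking that the $t$-exponents recombine to $(\gamma+\bar\beta-|{\bf k}|_\mfs)/\ell-1$.
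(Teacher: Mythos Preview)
Your proposal is correct and follows essentially the same route as the paper: split $\partial^{\bf k}K_t(x,\Lambda_x)=\int\partial^{\bf k}K_{t/2}(x,z)Q_{t/2}(z,\Lambda_x)\,dz$ via Definition \ref{asmp2}-\ref{asmp2:convolutionwithQ}, recentre $\Lambda_x$ at $z$ to produce the reconstruction term $Q_{t/2}(z,\Lambda_z)$ and the coherence term $Q_{t/2}(z,\Pi_z\Delta^\Gamma_{zx}f)$, bound the former by $\lb\Lambda\rb_{\gamma,\eta,wv}$ and the latter by the model norm times $\|f\|_{\gamma,\eta,v}^\#$ (the paper invokes inequality \eqref{eq:proof:reconst:coherence} for this), and finish with the first respectively second inequality of Lemma \ref{lemma:molifysingularity}. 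Your identification of why the sharp norm is indispensable and of how the $t$-powers recombine matches the paper's argument exactly; the only slip is a harmless sign in the recentring identity (it should read $Q_{t/2}(z,\Lambda_x)=Q_{t/2}(z,\Lambda_z)+Q_{t/2}(z,\Pi_z\Delta^\Gamma_{zx}f)$), which disappears under absolute values.
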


\begin{proof}
We prove only \ref{lemma:KregularizeRf}.
By Definition \ref{asmp2}-\ref{asmp2:convolutionwithQ}, we can decompose
\begin{align*}
|\partial^{\bf k}K_t(x,\Lambda_x)|
&\le\bigg|\int_{\bbR^d}\partial^{\bf k}K_{t/2}(x,y)Q_{t/2}(y,\Lambda_y)dy\bigg|\\
& \quad +\bigg|\int_{\bbR^d}\partial^{\bf k}K_{t/2}(x,y)Q_{t/2}\big(y,\Pi_yf(y)-\Pi_xf(x)\big)dy\bigg|.
\end{align*}
For the first term, by Definition \ref{asmp2}-\ref{asmp2:gauss} and by the property of reconstruction, we have
\begin{align*}
& (wv)(x)\bigg|\int_{\bbR^d}\partial^{\bf k}K_{t/2}(x,y)Q_{t/2}(y,\Lambda_y)dy\bigg|\\
& \lesssim C_K\,t^{(\bar\beta-|{\bf k}|_\mfs)/\ell-1}\int_{\bbR^d} (w^*v^*)(x-y)G_{t/2}(x-y) (wv)(y)|Q_{t/2}(y,\Lambda_y)| dy\\
& \lesssim C_K\,t^{(\gamma+\bar\beta-|{\bf k}|_\mfs)/\ell-1} \lb\Lambda\rb_{\gamma,\eta,wv}
\int_{\bbR^d} \varphi (y)^{\eta-\gamma} (w^*v^*)(x-y) G_{t/2}(x-y)  dy\\ 
&\lesssim C_K\,t^{(\gamma+\bar\beta-|{\bf k}|_\mfs)/\ell-1}\,\varphi (x)^{\eta-\gamma}\,
\lb\Lambda\rb_{\gamma,\eta,wv}.
\end{align*}
For the second term, by using the inequality \eqref{eq:proof:reconst:coherence} obtained in the proof of Theorem \ref{theorem:singularreconstruction} with $x$ and $y$ swapped, we have
\begin{align*}
& (wv)(x)\bigg|\int_{\bbR^d}\partial^{\bf k}K_{t/2}(x,y)Q_{t/2}\big(y,\Pi_yf(y)-\Pi_xf(x)\big)dy\bigg|\\
& \lesssim  C_K\,t^{(\bar\beta-|{\bf k}|_\mfs)/\ell-1}\int_{\bbR^d} G_{t/2}(x-y) (wv)(x)\big|Q_{t/2}\big(y,\Pi_yf(y)-\Pi_xf(x)\big)\big| dy\\
&\lesssim
C_K\|\Pi\|_{\gamma,w} \|f\|_{\gamma,\eta,v}^\#
\sum_{\alpha<\gamma} t^{(\alpha+\bar\beta-|{\bf k}|_\mfs)/\ell-1}
\int_{\bbR^d}\varphi (x,y)^{\eta-\gamma}\|y-x\|_\mfs^{\gamma-\alpha}\\
&\hspace{200pt}\times(w^*v^*)(x-y)G_{t/2}(x-y)dy 
\\
&\lesssim C_K\,t^{(\gamma+\bar\beta-|{\bf k}|_\mfs)/\ell-1}\,\varphi (x)^{\eta-\gamma}\,\|\Pi\|_{\gamma,w}\|f\|_{\gamma,\eta,v}^\#.
\end{align*}
\end{proof}

\subsection{Compatible models and multilevel Schauder estimate}

We recall from \cite[Section 5.2]{Ho23} the notions of abstract integrations and compatible models.
Hereafter, we use the \emph{polynomial structure} generated by dummy variables $X_1,\dots,X_d$ as in \cite[Section 2]{Hai14}.

\begin{defi}\label{defTbarT}
Let $\bar\scT=(\bar{\bfA},\bar{\bfT},\bar{\bfG})$ be a regularity structure satisfying the following properties.
\begin{itemize}
\item[(1)]
$\bbN[\mfs]\subset \bar{\bfA}$.
\item[(2)]
For each $\alpha\in\bbN[\mfs]$, the space $\bar{\bfT}_{\alpha}$ contains all $X^{\bf k}:=\prod_{i=1}^dX_i^{k_i}$ with $|{\bf k}|_{\mfs}=\alpha$.
\item[(3)]
The subspace $\spa\{X^{\bf k}\}_{{\bf k}\in\bbN^d}$ of $\bar\bfT$ is closed under $\bar{\bfG}$-actions.
\end{itemize}
Let $\scT=(\bfA,\bfT,\bfG)$ be another regularity structure.
A continuous linear operator $\mcI:\bfT\to\bar{\bfT}$ is called an \emph{abstract integration} of order $\beta\in(0,\bar\beta]$ if 
$$
\mcI:\bfT_\alpha\to\bar{\bfT}_{\alpha+\beta}
$$
for any $\alpha\in \bfA$.
For a fixed $G$-controlled weight $w$, we say that the pair $(M,\bar{M})$ of two models $M=(\Pi,\Gamma)\in\scM_w(\scT)$ and $\bar{M}=(\bar{\Pi},\bar{\Gamma})\in\scM_w(\bar{\scT})$ is \emph{compatible for $\mcI$} if it satisfies the following properties.
\begin{enumerate}
\renewcommand{\theenumi}{(\roman{enumi})}
\renewcommand{\labelenumi}{(\roman{enumi})}
\item\label{Iadmissible2}
For any ${\bf k}\in\bbN^d$,
$$
(\bar{\Pi}_xX^{\bf k})(\cdot)=(\cdot-x)^{\bf k},\qquad
\bar\Gamma_{yx} X^{\bf k}=\sum_{{\bf l}\le {\bf k}}\binom{\bf k}{\bf l}(y-x)^{\bf l} X^{{\bf k}-{\bf l}}.
$$
\item\label{Iadmissible3}
For each $x\in\bbR^d$, we define the linear map $\mcJ(x):\bfT_{<\delta-\beta}\to\spa\{X^{\bf k}\}_{|{\bf k}|_\mfs<\delta}\subset\bar{\bfT}$ by setting
\begin{align}\label{eq:Jx}
\mcJ(x)\tau=\sum_{|{\bf k}|_{\mfs}<\alpha+\beta}\frac{X^{\bf k}}{{\bf k}!}\partial^{\bf k}K(x,\Pi_x\tau)
\end{align}
for any $\alpha\in \bfA$ such that $\alpha+\beta<\delta$ and $\tau\in \bfT_{\alpha}$.
Then for any $\tau\in\bfT_{<\delta-\beta}$,
\begin{align*}
\bar{\Gamma}_{yx}\big(\mcI+\mcJ(x)\big)\tau=\big(\mcI+\mcJ(y)\big)\Gamma_{yx}\tau.
\end{align*}
\end{enumerate}
In addition, if the regularity $\alpha_0$ of $\scT$ is greater than $-\bar\beta$ and
\begin{align}\label{ex:admissiblemmaodel}
(\bar{\Pi}_x\mcI\tau)(\cdot)=K(\cdot,\Pi_x\tau)-\sum_{|{\bf k}|_{\mfs}<\alpha+\beta}\frac{(\cdot-x)^{\bf k}}{{\bf k}!}\partial^{\bf k}K(x,\Pi_x\tau)
\end{align}
for any $\tau\in\bfT_\alpha$ with $\alpha+\beta<\delta$, then we say that the pair $(M,\bar{M})$ is \emph{$K$-admissible for $\mcI$}.
\end{defi}

In \eqref{eq:Jx} and \eqref{ex:admissiblemmaodel}, 
the function $K(\cdot,\Pi_x\tau)$ and the coefficients $\partial^{\bf k}K(x,\Pi_x\tau)$ are well-defined by Lemma \ref{lemma:whyregularize}.
The following theorem is the second main result of this paper.

\begin{thm}\label{theorem:singularMSE}
Let $\scT$ and $\bar{\scT}$ be regularity structures satisfying the setting of Definition \ref{defTbarT} and let $\mcI:\bfT\to\bar\bfT$ be an abstract integration of order $\beta\in(0,\bar\beta]$.
Let $w$ and $v$ be $G$-controlled weights such that $w^2v$ is also $G$-controlled.
Given $(\Pi,\Gamma)\in\scM_{w}(\scT)$, $f\in\mcD_v^{\gamma,\eta}(\Gamma)^\#$ with $\gamma+\bar\beta<\delta$ and $\eta\in(\gamma-\mfs_1,\gamma]$, and its reconstruction $\Lambda\in C^{\zeta,Q}(wv)$, we define the functions
$$
\mcN(x;f,\Lambda)=\sum_{|{\bf k}|_{\mfs}<\gamma+\beta}\frac{X^{\bf k}}{{\bf k}!}\partial^{\bf k}K(x,\Lambda_x)
$$
and
$$
\mcK f(x):=\mcI f(x)+\mcJ(x)f(x)+\mcN(x;f,\Lambda)
$$
for $x\in(\bbR\setminus\{0\})\times\bbR^{d-1}$.
We assume $\zeta\le\eta\wedge\alpha_0$ and either of the following conditions.
\begin{itemize}
\item[(1)]
$\beta<\bar\beta$.
\item[(2)]
$\beta=\bar\beta$ and $\big\{\alpha+\bar\beta\, ;\, \alpha\in\bfA\cup\{\gamma,\zeta\}\big\}\cap\bbN[\mfs]=\emptyset$.
\end{itemize}
Then for any compatible pair of models $\big(M=(\Pi,\Gamma),\bar{M}=(\bar{\Pi},\bar{\Gamma})\big)\in\scM_w(\scT)\times\scM_w(\bar{\scT})$ and any singular modelled distribution $f\in\mcD_{v}^{\gamma,\eta}(\Gamma)^\#$, the function $\mcK f$ belongs to $\mcD_{w^2v}^{\gamma+\beta,\zeta+\beta}(\bar\Gamma)^\#$, and we have
\begin{align}
\label{MSbesov1}
&
\begin{aligned}
\lp \mcK f\rp_{\gamma+\beta,\zeta+\beta,w^2v}
&\lesssim
\|\mcI\|\lp f\rp_{\gamma,\eta,v}
+C_K\big\{\|\Pi\|_{\gamma,w}(1+\|\Gamma\|_{\gamma,w})\tri f\tri_{\gamma,\eta,v}^\#\\
&\hspace{130pt}+\|\Lambda\|_{C^{\zeta,Q}(wv)}+\lb\Lambda\rb_{\gamma,\eta,wv}\big\},
\end{aligned}
\\
\label{MSbesov2}
&\|\mcK f\|_{\gamma+\beta,\zeta+\beta,w^2v}^\#
\lesssim\|\mcI\| \|f\|_{\gamma,\eta,v}^\#
+C_K\big\{
\|\Pi\|_{\gamma,w} (1+\|\Gamma\|_{\gamma,w})\| f\|_{\gamma,\eta,v}^\#
+\lb\Lambda\rb_{\gamma,\eta,wv}
\big\},
\end{align}
where $\|\mcI\|$ is the operator norm from $\bfT_{<\gamma}$ to $\bar{\bfT}_{<\gamma+\beta}$, and the implicit proportional constant depends only on $G,w,v,\gamma,\eta$, and $\bfA$.
Moreover, there is a quadratic function $C_R>0$ of $R>0$ such that
\begin{align*}
\tri\mcK f^{(1)};\mcK f^{(2)}\tri_{\gamma+\beta,\zeta+\beta,w^2v}^\#
\le C_R\Big(
\tri M^{(1)};M^{(2)}\tri_{\gamma,w}+\tri f^{(1)};f^{(2)}\tri_{\gamma,\eta,v}^\#
\Big),
\end{align*}
for any $M^{(i)}=(\Pi^{(i)},\Gamma^{(i)})\in\scM_{w}(\scT)$ and $\bar{M}^{(i)}=(\bar\Pi^{(i)},\bar\Gamma^{(i)})\in\scM_w(\bar\scT)$ such that $(M^{(i)},\bar{M}^{(i)})$ is compatible and any $f^{(i)}\in\mcD_{v}^{\gamma,\eta}(\Gamma^{(i)})$ with $i\in\{1,2\}$ such that $\tri M^{(i)}\tri_{\gamma,w}\le R$ and $\tri f^{(i)}\tri_{\gamma,\eta,v}^\#\le R$.
\end{thm}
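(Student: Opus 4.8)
The plan is to follow the pattern of the multilevel Schauder estimate in \cite[Theorem 5.12]{Ho23}, isolating the three contributions $\mcI f(x)$, $\mcJ(x)f(x)$, and $\mcN(x;f,\Lambda)$ separately and tracking the temporal weight $\varphi$ throughout. First I would establish the pointwise bound $\lp\mcK f\rp_{\gamma+\beta,\zeta+\beta,w^2v}$: the $\mcI f$ part is immediate from the definition of $\lp f\rp_{\gamma,\eta,v}$ and the operator norm $\|\mcI\|$, together with $\varphi(x)^{(\eta-\alpha)\wedge0}\le\varphi(x)^{(\zeta+\beta-(\alpha+\beta))\wedge0}$; the $\mcJ(x)f(x)$ part decomposes into its polynomial components $\frac{X^{\bf k}}{{\bf k}!}\partial^{\bf k}K(x,\Pi_x f(x))$, each controlled by integrating the bound of Lemma \ref{lemma:KregularizeRf} (applied to $\Lambda=\Pi_xf(x)$, which is its own reconstruction, so $\lb\Lambda\rb$ and $\|f\|^\#$ appear) or more directly by Lemma \ref{lemma:whyregularize}-\ref{lemma:KregularizePi} and Remark \ref{proposition:whatisPixtau}, always producing the factor $\varphi(x)^{\eta-\gamma}$ and a convergent $t$-integral since $|{\bf k}|_\mfs<\gamma+\beta\le\delta$; the $\mcN$ part is handled by integrating the estimate of Lemma \ref{lemma:KregularizeRf} directly, which converges because $|{\bf k}|_\mfs<\gamma+\beta\le(\gamma+\bar\beta)\wedge\delta$.

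The heart of the argument is the increment bound $\|\mcK f\|_{\gamma+\beta,\zeta+\beta,w^2v}^\#$. Here I would write $\Delta^{\bar\Gamma}_{yx}\mcK f=\mcK f(y)-\bar\Gamma_{yx}\mcK f(x)$ and use the compatibility relation $\bar\Gamma_{yx}(\mcI+\mcJ(x))\tau=(\mcI+\mcJ(y))\Gamma_{yx}\tau$ to replace $\bar\Gamma_{yx}(\mcI f(x)+\mcJ(x)f(x))$ by $(\mcI+\mcJ(y))\Gamma_{yx}f(x)$, so that the $\mcI$ and $\mcJ$ pieces recombine into $\mcI(\Delta^\Gamma_{yx}f)+\mcJ(y)(\Delta^\Gamma_{yx}f)+(\mcN(y;f,\Lambda)-\bar\Gamma_{yx}\mcN(x;f,\Lambda))-(\mcN\text{-like tail from }\mcJ)$, up to reorganization. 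The $\mcI(\Delta^\Gamma_{yx}f)$ term is controlled by $\|\mcI\|\|f\|_{\gamma,\eta,v}^\#$. The genuinely delicate terms are those involving $\partial^{\bf k}K$: one must bound quantities like $\partial^{\bf k}K(y,\Pi_y f(y))-\sum_{\bf l}\frac{(y-x)^{\bf l}}{{\bf l}!}\partial^{{\bf k}+{\bf l}}K(x,\Pi_x f(x))$ and the analogous expression with $\Lambda_x$ in place of $\Pi_x f(x)$. Following \cite{Ho23}, I would split each such difference into a near regime $t\le\|y-x\|_\mfs^\ell$, where one estimates each term of the Taylor remainder crudely using the upper bounds of Definition \ref{asmp2}-\ref{asmp2:gauss}, and a far regime $t>\|y-x\|_\mfs^\ell$, where one invokes the H\"older continuity of Definition \ref{asmp2}-\ref{asmp2:holder}; in each regime the $t$-integral must converge, which is exactly where hypotheses (1) $\beta<\bar\beta$ or (2) $\{\alpha+\bar\beta\}\cap\bbN[\mfs]=\emptyset$ enter — they prevent the appearance of a logarithmically divergent $\int dt/t$ at a critical homogeneity. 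Throughout, the temporal weight is tracked by replacing $\|y-x\|_\mfs$-dependent estimates by $\varphi(x,y)$-dependent ones using $\|y-x\|_\mfs\le\varphi(x,y)$ on the relevant region and the decomposition \eqref{ineq:omegashift} together with Lemma \ref{lemma:molifysingularity}, so that every bound carries the correct factor $\varphi(x,y)^{\zeta+\beta-(\gamma+\beta)}=\varphi(x,y)^{\zeta-\gamma}$; one also needs Proposition \ref{def:variantsnorms}-\ref{def:variantsnorms1} and the symmetry of $w^*$ (which holds for models, being a completion of smooth ones) to pass between $\|\cdot\|$ and $\|\cdot\|^\#$ and absorb the weight $w$ turning into $w^2$.

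I expect the main obstacle to be the careful bookkeeping of the temporal singularity in the far regime of the increment estimate: when $\|y-x\|_\mfs$ is comparable to $\varphi(x,y)$ but $t$ ranges up to $1$, one must ensure that the crude replacement $\varphi(y)^{\eta-\gamma}\lesssim\varphi(x,y)^{\eta-\gamma}$ together with the shift inequality \eqref{ineq:omegashift} does not lose powers, and that summing the Taylor-remainder terms $\sum_{|{\bf l}|_\mfs<\delta-|{\bf k}|_\mfs}$ stays under control — this is the step in \cite{Ho23} that requires the most care and where condition (2) is genuinely used. The remaining pieces — verifying $\mcK f$ takes values in $\bar\bfT_{<\gamma+\beta}$ (because $\mcI f(x)\in\bar\bfT_{<\gamma+\beta}$, $\mcJ(x)f(x)$ and $\mcN$ are spanned by $X^{\bf k}$ with $|{\bf k}|_\mfs<\gamma+\beta$), and the local Lipschitz estimate, which follows by running the same computation with all objects replaced by differences $M^{(1)}-M^{(2)}$, $f^{(1)}-f^{(2)}$, $\Lambda^{(1)}-\Lambda^{(2)}$ and using the already-established reconstruction Lipschitz bounds of Theorem \ref{theorem:singularreconstruction} and the multilinearity of the $\Pi,\Gamma$-dependent terms (whence the quadratic, rather than affine, dependence on $R$) — are routine once the base estimates are in place.
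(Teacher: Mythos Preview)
Your outline for the increment bound \eqref{MSbesov2} is essentially correct and matches the paper, which simply says that part is identical to \cite[Theorem 5.12]{Ho23} up to carrying the extra factor $\varphi(x,y)^{\eta-\gamma}$. However, you have the emphasis backwards: the genuinely new difficulty in the singular setting is the pointwise bound \eqref{MSbesov1}, and your treatment of it has a gap.

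If you estimate the $\mcN$ term by integrating Lemma~\ref{lemma:whyregularize}-\ref{lemma:KregularizeRf} over $t\in(0,1)$, you obtain at best $(wv)(x)|\partial^{\bf k}K(x,\Lambda_x)|\lesssim\varphi(x)^{\eta-\gamma}$, which diverges as $x_1\to0$ since $\eta\le\gamma$. But for small $|{\bf k}|_\mfs$ (say $|{\bf k}|_\mfs<\zeta+\beta$) the target bound $\varphi(x)^{(\zeta+\beta-|{\bf k}|_\mfs)\wedge0}$ is $O(1)$, so the argument fails. Treating $\mcJ$ and $\mcN$ separately cannot work here; the paper instead combines them into
\[
\mcA^{\bf k}(x)=\partial^{\bf k}K(x,\Lambda_x)+\sum_{\alpha:\,|{\bf k}|_\mfs<\alpha+\beta}\partial^{\bf k}K(x,\Pi_xP_\alpha f(x))
=\partial^{\bf k}K(x,\Lambda)-\sum_{\alpha:\,|{\bf k}|_\mfs\ge\alpha+\beta}\partial^{\bf k}K(x,\Pi_xP_\alpha f(x)),
\]
splits the $t$-integral at $t=\varphi(x)^\ell$, and uses the first representation for $t\le\varphi(x)^\ell$ (all powers of $t$ are $>-1$) and the second for $t>\varphi(x)^\ell$. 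In the large-$t$ piece the full reconstruction $\Lambda$ (not $\Lambda_x$) appears, and it is bounded via Lemma~\ref{lemma:whyregularize}-\ref{lemma:whyregularize1} using $\|\Lambda\|_{C^{\zeta,Q}(wv)}$ --- which is exactly why this norm shows up on the right-hand side of \eqref{MSbesov1}, a term your argument never produces. The case analysis on whether $\zeta+\bar\beta-|{\bf k}|_\mfs$ or $\alpha+\bar\beta-|{\bf k}|_\mfs$ vanishes is where hypotheses (1)/(2) are invoked for \eqref{MSbesov1}; they are not used only in the increment estimate as you suggest. (Also, the remark about symmetry of $w^*$ and Proposition~\ref{def:variantsnorms}-\ref{def:variantsnorms1} is misplaced: the theorem is stated directly for the $\#$-norms and no such passage is needed.)
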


\begin{proof}
The proof is carried out by a method similar to that of \cite[Theorem 5.12]{Ho23}, but we have to prove \eqref{MSbesov1} more carefully than \cite{Ho23}.
For the $\mcI$ term, by the continuity of $\mcI$ we immediately have
\begin{align*}
v(x)\|\mcI f(x)\|_\alpha
\le v(x)\|\mcI\|\|f(x)\|_{\alpha-\beta}
\le\|\mcI\|\lp f\rp_{\gamma,\eta,v}\,\varphi (x)^{(\eta+\beta-\alpha)\wedge0}
\end{align*}
for any $\alpha<\gamma+\beta$.
For the $\mcJ$ and $\mcN$ terms, we decompose
$$
\mcJ(x)f(x)+\mcN(x,f;\Lambda)=\sum_{|{\bf k}|_\mfs<\gamma+\beta}\frac{X^{\bf k}}{{\bf k}!}\mcA^{\bf k}(x),
$$
where
$$
\mcA^{\bf k}(x)
=\sum_{\alpha\in[\alpha_0,\gamma),\,|{\bf k}|_\mfs<\alpha+\beta}\partial^{\bf k}K\big(x,\Pi_xP_\alpha f(x)\big)+\partial^{\bf k}K(x,\Lambda_x).
$$
We further define the decomposition $\mcA^{\bf k}(x)=\int_0^1\mcA^{\bf k}_t(x)dt$ according to the integral form $K=\int_0^1 K_tdt$, where $\mcA_t^{\bf k}$ is defined in the same way as $\mcA^{\bf k}$ with $K$ replaced by $K_t$.
By using Lemma \ref{lemma:whyregularize}-\ref{lemma:KregularizePi} for $\partial^{\bf k}K_t\big(x,\Pi_xP_\alpha f(x)\big)$ and \ref{lemma:KregularizeRf} for $\partial^{\bf k}K_t(x,\Lambda_x)$, we have
\begin{align*}
(w^2v)(x)|\mcA_t^{\bf k}(x)|
\lesssim L_1
\sum_{\alpha\in[\alpha_0,\gamma],\,|{\bf k}|_\mfs<\alpha+\beta}
\varphi (x)^{(\eta-\alpha)\wedge0}\,t^{(\alpha+\bar\beta-|{\bf k}|_\mfs)/\ell-1}
\end{align*}
where $L_1:=C_K\big\{\|\Pi\|_{\gamma,w}(1+\|\Gamma\|_{\gamma,w})\tri f\tri_{\gamma,\eta,v}^\#+\lb\Lambda\rb_{\gamma,\eta,wv}\big\}$.
Since all powers of $t$ above are greater than $-1$, we have
\begin{align*}
(w^2v)(x)\int_0^{\varphi (x)^\ell}|\mcA_t^{\bf k}(x)|dt
&\lesssim L_1\sum_{\alpha\in[\alpha_0,\gamma],\,|{\bf k}|_\mfs<\alpha+\beta}
\varphi (x)^{\eta\wedge\alpha+\bar\beta-|{\bf k}|_\mfs}\\
&\lesssim L_1\,\varphi (x)^{(\eta\wedge\alpha_0+\beta-|{\bf k}|_\mfs)\wedge0}.
\end{align*}
For the integral over $\varphi (x)^\ell<t\le1$, we use another decomposition
$$
\mcA_t^{\bf k}(x)
=\partial^{\bf k}K_t\big(x,\Lambda)-
\sum_{\alpha\in[\alpha_0,\gamma),\,|{\bf k}|_\mfs\ge\alpha+\beta}\partial^{\bf k}K_t\big(x,\Pi_xP_\alpha f(x)\big)
$$
and consider the two terms in the right hand side separately.
For the first term, by the assumption that $\Lambda\in C^{\zeta,Q}(wv)$ and by Lemma \ref{lemma:whyregularize}-\ref{lemma:whyregularize1}, we have
$$
(wv)(x)|\partial^{\bf k}K_t\big(x,\Lambda)|
\lesssim C_K\|\Lambda\|_{C^{\zeta,Q}(wv)}\, t^{(\zeta+\bar\beta-|{\bf k}|_\mfs)/\ell-1}.
$$
If $\zeta+\bar\beta-|{\bf k}|_\mfs\neq0$, we have
$$
\int_{\varphi (x)^\ell}^1t^{(\zeta+\bar\beta-|{\bf k}|_\mfs)/\ell-1}dt
\lesssim\varphi (x)^{(\zeta+\bar\beta-|{\bf k}|_\mfs)\wedge0}
\lesssim\varphi (x)^{(\zeta+\beta-|{\bf k}|_\mfs)\wedge0}.
$$
Otherwise, since $\zeta+\beta-|{\bf k}|_\mfs<\zeta+\bar\beta-|{\bf k}|_\mfs=0$ by assumption we have
$$
\int_{\varphi (x)^\ell}^1t^{(\zeta+\bar\beta-|{\bf k}|_\mfs)/\ell-1}dt
\lesssim\int_{\varphi (x)^\ell}^1t^{(\zeta+\beta-|{\bf k}|_\mfs)/\ell-1}dt
\lesssim\varphi (x)^{\zeta+\beta-|{\bf k}|_\mfs}.
$$
In either case, we obtain the desired estimate.
For the remaining term, by Lemma \ref{lemma:whyregularize}-\ref{lemma:KregularizePi} we have 
\begin{align*}
&(w^2v)(x)\sum_{\alpha\in[\alpha_0,\gamma),\,|{\bf k}|_\mfs\ge\alpha+\beta}
\big|\partial^{\bf k}K_t\big(x,\Pi_xP_\alpha f(x)\big)\big|\\
&\lesssim L_2
\sum_{\alpha\in[\alpha_0,\gamma),\,|{\bf k}|_\mfs\ge\alpha+\beta}
\varphi (x)^{(\eta-\alpha)\wedge0}\,t^{(\alpha+\bar\beta-|{\bf k}|_\mfs)/\ell-1},
\end{align*}
where $L_2:=C_K\|\Pi\|_{\gamma,w}(1+\|\Gamma\|_{\gamma,w})\lp f\rp_{\gamma,\eta,v}$.
For $\alpha$ such that $|{\bf k}|_\mfs>\alpha+\beta$, we easily have
\begin{align*}
\varphi (x)^{(\eta-\alpha)\wedge0}\int_{\varphi (x)^\ell}^1t^{(\alpha+\bar\beta-|{\bf k}|_\mfs)/\ell-1}dt
&\lesssim
\varphi (x)^{(\eta-\alpha)\wedge0}\int_{\varphi (x)^\ell}^1t^{(\alpha+\beta-|{\bf k}|_\mfs)/\ell-1}dt\\
&\lesssim\varphi (x)^{\eta\wedge\alpha+\beta-|{\bf k}|_\mfs}.
\end{align*}
If there exists $\alpha$ such that $|{\bf k}|_\mfs=\alpha+\beta$, then since 
$0=\alpha+\beta-|{\bf k}|_\mfs<\alpha+\bar\beta-|{\bf k}|_\mfs$ by assumption, we have
\begin{align*}
\varphi (x)^{(\eta-\alpha)\wedge0}\int_{\varphi (x)^\ell}^1t^{(\alpha+\bar\beta-|{\bf k}|_\mfs)/\ell-1}dt
\lesssim
\varphi (x)^{(\eta-\alpha)\wedge0}
=\varphi (x)^{\eta\wedge\alpha+\beta-|{\bf k}|_\mfs}.
\end{align*}
Consequently, we obtain
\begin{align*}
(w^2v)(x)\int_{\varphi (x)^\ell}^1|\mcA_t^{\bf k}(x)|dt
\lesssim\{C_K\|\Lambda\|_{C^{\zeta,Q}(wv)}+L_2\}
\varphi (x)^{(\zeta+\beta-|{\bf k}|_\mfs)\wedge0}.
\end{align*}

The proof of \eqref{MSbesov2} is completely the same as that of \cite[Theorem 5.12]{Ho23} except the existence of the factor $\varphi (x,y)^{\eta-\gamma}$.
\end{proof}

The following theorem is obtained similarly to \cite[Theorem 5.13]{Ho23}, so we omit the proof.

\begin{thm}\label{theorem:KR=RK}
In addition to the setting of Theorem \ref{theorem:singularMSE}, we assume that $\zeta+\bar\beta>0$ and that $(M,\bar{M})$ is $K$-admissible for $\mcI$. Then $K\Lambda\in C(wv)$ is a reconstruction of $\mcK f\in\mcD_{w^2v}^{\gamma+\beta,\zeta+\beta}(\bar\Gamma)^\#$ and
$$
\lb K\Lambda\rb_{\gamma+\beta,\zeta+\beta,w^2v}\lesssim C_K\big(\lb\Lambda\rb_{\gamma,\eta,wv}+\|\Pi\|_{\gamma,w}\|f\|_{\gamma,\eta,v}^\#\big).
$$
A similar local Lipschitz estimate to the latter part of Theorem \ref{theorem:singularMSE} also holds.
\end{thm}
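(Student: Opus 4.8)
The plan is to verify that $K\Lambda$ reconstructs $\mcK f$ and to prove the displayed bound, following the scheme of \cite[Theorem 5.13]{Ho23} while carrying the temporal weight through the estimates exactly as in the proof of Theorem \ref{theorem:singularMSE}.

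\emph{Well-definedness and the algebraic identity.} First I would note that $K\Lambda:=\int_0^1K_s\Lambda\,ds$ converges in $C(wv)$: by Lemma \ref{lemma:whyregularize}-\ref{lemma:whyregularize1} with ${\bf k}=0$, $\alpha=\zeta$, and the weight $wv$, one has $\|K_s\Lambda\|_{L^\infty(wv)}\lesssim C_K\,s^{(\zeta+\bar\beta)/\ell-1}\|\Lambda\|_{C^{\zeta,Q}(wv)}$, whose exponent exceeds $-1$ precisely because $\zeta+\bar\beta>0$; since $C(wv)\subset C(w^2v)\subset C^{0,Q}(w^2v)$, the candidate $K\Lambda$ lies in a space admissible for a reconstruction. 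The heart of the proof is the identity
$$
(K\Lambda)_x:=K\Lambda-\bar\Pi_x\mcK f(x)=K(\cdot,\Lambda_x)-\sum_{|{\bf k}|_\mfs<\gamma+\beta}\frac{(\cdot-x)^{\bf k}}{{\bf k}!}\,\partial^{\bf k}K(x,\Lambda_x),\qquad\Lambda_x:=\Lambda-\Pi_xf(x),
$$
i.e. $(K\Lambda)_x$ is the order-$(\gamma+\beta)$ Taylor remainder at $x$ of the function $K(\cdot,\Lambda_x)$. To obtain it I would expand $\bar\Pi_x\mcK f(x)=\bar\Pi_x\mcI f(x)+\bar\Pi_x\mcJ(x)f(x)+\bar\Pi_x\mcN(x;f,\Lambda)$, apply the $K$-admissibility relation \eqref{ex:admissiblemmaodel} to $\bar\Pi_x\mcI P_\alpha f(x)$ for each $\alpha<\gamma$, and use $\bar\Pi_xX^{\bf k}(\cdot)=(\cdot-x)^{\bf k}$ (Definition \ref{defTbarT}-\ref{Iadmissible2}): the Taylor coefficients $\partial^{\bf k}K(x,\Pi_xP_\alpha f(x))$ produced by $\mcI$ cancel exactly those in $\mcJ(x)f(x)$, leaving $K(\cdot,\Pi_xf(x))$, which combines with $K\Lambda$ into $K(\cdot,\Lambda_x)$, while $\bar\Pi_x\mcN(x;f,\Lambda)$ supplies the subtracted polynomial. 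Every term on the right is well-defined by Lemma \ref{lemma:whyregularize}, and the membership $\mcK f\in\mcD^{\gamma+\beta,\zeta+\beta}_{w^2v}(\bar\Gamma)^\#$ is already provided by Theorem \ref{theorem:singularMSE}.

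\emph{The key estimate.} Both the reconstruction property and the claimed bound reduce to estimating $(w^2v)(x)|Q_\lambda(x,(K\Lambda)_x)|$ for $\lambda\in(0,1]$. Writing $K=\int_0^1K_s\,ds$ and expanding,
$$
Q_\lambda(x,(K\Lambda)_x)=\int_0^1\Big[Q_\lambda\big(x,K_s(\cdot,\Lambda_x)\big)-\sum_{|{\bf k}|_\mfs<\gamma+\beta}\tfrac1{{\bf k}!}\,Q_\lambda\big(x,(\cdot-x)^{\bf k}\big)\,\partial^{\bf k}K_s(x,\Lambda_x)\Big]ds,
$$
and I would split $\int_0^1=\int_0^{\lambda}+\int_{\lambda}^1$. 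On $\{s\ge\lambda\}$ the integrand is the Taylor remainder of $K_s(\cdot,\Lambda_x)$ at $x$, so testing against $Q_\lambda(x,\cdot)$, whose effective support is $\{\|y-x\|_\mfs\lesssim\lambda^{1/\ell}\le s^{1/\ell}\}$, gains a factor $\lambda^{(\gamma+\beta)/\ell}$ from $\int G_\lambda(x-y)\|y-x\|_\mfs^{\gamma+\beta}dy\lesssim\lambda^{(\gamma+\beta)/\ell}$, while the size of the remainder is controlled by the H\"older bound \ref{asmp2:holder} on $K_s$ together with a H\"older-remainder analogue of Lemma \ref{lemma:whyregularize}-\ref{lemma:KregularizeRf} (obtained by repeating that proof with $K_s$ replaced by the kernel $K_s(x+\cdot,y)$ minus its Taylor polynomial in the first slot, which still satisfies the convolution property \ref{asmp2:convolutionwithQ}); this keeps the factor $\varphi(x)^{\eta-\gamma}$ and produces powers $s^{(\bar\beta-\gamma-\beta)/\ell-1}$, whose integral $\int_\lambda^1ds$ is $\lesssim\lambda^{(\gamma+\beta)/\ell}$ up to borderline logarithmic cases, which either gain from the strict inequality $\beta<\bar\beta$ (case (1)) or are ruled out by the non-resonance condition $\{\alpha+\bar\beta:\alpha\in\bfA\cup\{\gamma,\zeta\}\}\cap\bbN[\mfs]=\emptyset$ (case (2)). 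On $\{s<\lambda\}$ I would bound the two pieces separately: the polynomial terms via $|Q_\lambda(x,(\cdot-x)^{\bf k})|\lesssim\lambda^{|{\bf k}|_\mfs/\ell}$ and Lemma \ref{lemma:whyregularize}-\ref{lemma:KregularizeRf} for $\partial^{\bf k}K_s(x,\Lambda_x)$, and the piece $Q_\lambda(x,K_s(\cdot,\Lambda_x))$ by inserting $K_s=K_{s/2}\circ Q_{s/2}$ (property \ref{asmp2:convolutionwithQ}), which turns $Q_{s/2}(\cdot,\Lambda_x)$ into a function to which one applies the germ bound \eqref{eq:proof:reconst:coherence} (after shifting the base point from $x$ to the running point $z$ via $\Lambda_x=\Lambda_z+\Pi_z(f(z)-\Gamma_{zx}f(x))$) and the reconstruction bound $\lb\Lambda\rb_{\gamma,\eta,wv}$, integrating $G$-kernels with Lemma \ref{lemma:molifysingularity}; in both cases the resulting $s$-powers exceed $-1$ and $\int_0^\lambda ds\lesssim\lambda^{(\gamma+\bar\beta)/\ell}$. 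Collecting the two regimes gives $(w^2v)(x)|Q_\lambda(x,(K\Lambda)_x)|\lesssim C_K\,\lambda^{(\gamma+\beta)/\ell}\varphi(x)^{\eta-\gamma}\big(\lb\Lambda\rb_{\gamma,\eta,wv}+\|\Pi\|_{\gamma,w}\|f\|^\#_{\gamma,\eta,v}\big)$, and since $\varphi(x)^{\eta-\gamma}\varphi(x)^{(\gamma+\beta)-(\zeta+\beta)}=\varphi(x)^{\eta-\zeta}\le1$ by $\zeta\le\eta$, this is exactly $\lb K\Lambda\rb_{\gamma+\beta,\zeta+\beta,w^2v}\lesssim C_K(\lb\Lambda\rb_{\gamma,\eta,wv}+\|\Pi\|_{\gamma,w}\|f\|^\#_{\gamma,\eta,v})$; together with $K\Lambda\in C^{0,Q}(w^2v)$ this shows that $K\Lambda$ is a reconstruction of $\mcK f$.

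\emph{Lipschitz estimate and the main obstacle.} The local Lipschitz bound follows by routine difference bookkeeping: every object above is (multi)linear in $(\Pi,\Gamma,\Lambda)$ and in $f$ apart from cross terms (through $\Gamma$ in Lemma \ref{lemma:whyregularize}-\ref{lemma:KregularizeRf} and through $\Pi_xf(x)$ inside $\Lambda_x$), so replacing each quantity by its difference ``pair $1$ minus pair $2$'' and telescoping, exactly as in the Lipschitz part of Theorem \ref{theorem:singularMSE}, produces the claim with a polynomial $C_R$. The step I expect to be the main obstacle is the $\{s<\lambda\}$ regime of the key estimate: because $Q_\lambda$ and $K_s$ are genuine integral operators over the whole spacetime, one cannot localise $Q_\lambda(x,K_s(\cdot,\Lambda_x))$ near $x$ as in \cite{Hai14}, and since $\Lambda$ is controlled only through $\lb\Lambda\rb$---which pins down $Q_s(z,\Lambda_z)$ at the \emph{matching} base point $z$---one must transport the germ bound from base point $x$ to the running point $z$ while keeping the vanishing factor $\varphi(x)^{\eta-\gamma}$ intact across the $s$-integral; this is precisely where the argument departs from \cite[Theorem 5.13]{Ho23} and calls for the careful temporal-weight handling already used for \eqref{MSbesov1}.
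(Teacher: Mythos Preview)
Your proposal is correct and follows exactly the approach the paper indicates: the paper omits the proof, stating only that it is ``obtained similarly to \cite[Theorem 5.13]{Ho23}'', and your sketch does precisely this---reproduce the algebraic identity from $K$-admissibility, split $\int_0^1 K_s\,ds$ at the scale $\lambda$, use the H\"older remainder bound \ref{asmp2:holder} for large $s$ and Lemma \ref{lemma:whyregularize}-\ref{lemma:KregularizeRf} for small $s$, now with the temporal factor $\varphi(x)^{\eta-\gamma}$ carried through via Lemma \ref{lemma:molifysingularity}. The only point worth flagging is that your ``effective support'' phrasing for $Q_\lambda$ is heuristic, since $G_\lambda$ is not compactly supported and the H\"older bound \ref{asmp2:holder} is stated only for $\|h\|_\mfs\le s^{1/\ell}$; in the region $\|y-x\|_\mfs>s^{1/\ell}\ge\lambda^{1/\ell}$ one bounds the terms of the remainder separately and uses the moment condition \eqref{weightintegrable} on $G$ to absorb the excess, exactly as in \cite{Ho23}.
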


Combining Theorem \ref{theorem:singularMSE} with Proposition \ref{def:variantsnorms}-\ref{def:variantsnorms1}, we have the following result.

\begin{cor}\label{corollary:singularMSE}
In addition to the setting of Theorem \ref{theorem:singularMSE}, assume that $w^3v$ is $G$-controlled and that $\alpha_0>\gamma-\mfs_1$.
Then for any compatible pair of models $\big(M=(\Pi,\Gamma),\bar{M}=(\bar{\Pi},\bar{\Gamma})\big)\in\scM_w(\scT)\times\scM_w(\bar{\scT})$ and any singular modelled distribution $f\in\mcD_{v}^{\gamma,\eta}(\Gamma)$, the function $\mcK f$ belongs to $\mcD_{w^3v}^{\gamma+\beta,\zeta+\beta}(\bar\Gamma)$, and we have
\begin{align*}
&
\begin{aligned}
\lp \mcK f\rp_{\gamma+\beta,\zeta+\beta,w^3v}
&\lesssim
\|\mcI\|\lp f\rp_{\gamma,\eta,v}
+C_K\big\{\|\Pi\|_{\gamma,w}(1+\|\Gamma\|_{\gamma,w})^2\tri f\tri_{\gamma,\eta,v}\\
&\hspace{130pt}+\|\Lambda\|_{C^{\zeta,Q}(wv)}+\lb\Lambda\rb_{\gamma,\eta,wv}\big\},
\end{aligned}
\\
&\|\mcK f\|_{\gamma+\beta,\zeta+\beta,w^3v}
\lesssim\|\mcI\| \big\{\|\Gamma\|_{\gamma,w}\lp f\rp_{\gamma,\eta,v}+\| f\|_{\gamma,\eta,v}\big\}\\
&\hspace{100pt}+C_K\big\{
\|\Pi\|_{\gamma,w} (1+\|\Gamma\|_{\gamma,w})^2\| f\|_{\gamma,\eta,v}
+\lb\Lambda\rb_{\gamma,\eta,wv}
\big\}.
\end{align*}
A similar local Lipschitz estimate to the latter part of Theorem \ref{theorem:singularMSE} also holds.
\end{cor}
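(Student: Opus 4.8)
The plan is to reduce Corollary \ref{corollary:singularMSE} to Theorem \ref{theorem:singularMSE} in exactly the way Corollary \ref{corollary:singularreconstruction} was reduced to Theorem \ref{theorem:singularreconstruction}: given $f\in\mcD_v^{\gamma,\eta}(\Gamma)$, regard it instead as an element of the $\#$-class $\mcD_{wv}^{\gamma,\eta\wedge\alpha_0}(\Gamma)^\#$. The first ingredient is Proposition \ref{def:variantsnorms}-\ref{def:variantsnorms1}, which gives $f\in\mcD_{wv}^{\gamma,\eta\wedge\alpha_0}(\Gamma)^\#$ with $\|f\|_{\gamma,\eta\wedge\alpha_0,wv}^\#\lesssim(1+\|\Gamma\|_{\gamma,w})\lp f\rp_{\gamma,\eta,v}+\|f\|_{\gamma,\eta,v}$; combining this with the trivial bound $\lp f\rp_{\gamma,\eta\wedge\alpha_0,wv}\le\lp f\rp_{\gamma,\eta,v}$ (which uses only $w\le1$, $\varphi\le1$ and $\eta\wedge\alpha_0\le\eta$) yields $\tri f\tri_{\gamma,\eta\wedge\alpha_0,wv}^\#\lesssim(1+\|\Gamma\|_{\gamma,w})\tri f\tri_{\gamma,\eta,v}$. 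The second ingredient is that a given reconstruction $\Lambda\in C^{\zeta,Q}(wv)$ of $f$ is automatically a reconstruction of $f$ in the new data: since $w^2v\le wv$ one has $\Lambda\in C^{\zeta,Q}(w^2v)$ with $\|\Lambda\|_{C^{\zeta,Q}(w^2v)}\le\|\Lambda\|_{C^{\zeta,Q}(wv)}$, and since moreover $\varphi^{\gamma-\eta\wedge\alpha_0}\le\varphi^{\gamma-\eta}$ the defining supremum of $\lb\cdot\rb$ only shrinks, giving $\lb\Lambda\rb_{\gamma,\eta\wedge\alpha_0,w^2v}\le\lb\Lambda\rb_{\gamma,\eta,wv}$.

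With these in hand I would apply Theorem \ref{theorem:singularMSE} verbatim, with the modelled-distribution weight $v$ replaced by $wv$ and the parameter $\eta$ replaced by $\eta\wedge\alpha_0$. All hypotheses carry over: $\gamma+\bar\beta<\delta$, the alternative ``$\beta<\bar\beta$'' or the integrality condition on $\{\alpha+\bar\beta\,;\,\alpha\in\bfA\cup\{\gamma,\zeta\}\}$, and the abstract integration $\mcI$ and model weight $w$ are untouched; the exponent condition reads $\eta\wedge\alpha_0\in(\gamma-\mfs_1,\gamma]$, which holds because $\eta>\gamma-\mfs_1$ and, by the new hypothesis, $\alpha_0>\gamma-\mfs_1$; the requirement $\zeta\le(\eta\wedge\alpha_0)\wedge\alpha_0=\eta\wedge\alpha_0$ is the one already assumed; and the weight needed for the conclusion, $w^2\cdot wv=w^3v$, is $G$-controlled by the new hypothesis (as is $wv$). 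Since $\mcK f(x)=\mcI f(x)+\mcJ(x)f(x)+\mcN(x;f,\Lambda)$ is defined pointwise from $f(x)$, the pair of models, the kernel $K$ and $\Lambda$, it does not depend on which norm is placed on the modelled distributions, so it is literally the same function; Theorem \ref{theorem:singularMSE} therefore yields $\mcK f\in\mcD_{w^3v}^{\gamma+\beta,\zeta+\beta}(\bar\Gamma)^\#$ together with the bounds \eqref{MSbesov1}--\eqref{MSbesov2}, with $(\eta,v)$ there replaced by $(\eta\wedge\alpha_0,wv)$.

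It then remains to feed the two translation inequalities of the first paragraph back in. The factor $(1+\|\Gamma\|_{\gamma,w})$ produced by Proposition \ref{def:variantsnorms}-\ref{def:variantsnorms1} is what upgrades the single power $(1+\|\Gamma\|_{\gamma,w})$ to $(1+\|\Gamma\|_{\gamma,w})^2$ and replaces $\|f\|_{\gamma,\eta,v}^\#$ by $\|\Gamma\|_{\gamma,w}\lp f\rp_{\gamma,\eta,v}+\|f\|_{\gamma,\eta,v}$ in the statement, while the monotonicities above let us overestimate $\|\Lambda\|_{C^{\zeta,Q}(w^2v)}$ and $\lb\Lambda\rb_{\gamma,\eta\wedge\alpha_0,w^2v}$ by $\|\Lambda\|_{C^{\zeta,Q}(wv)}$ and $\lb\Lambda\rb_{\gamma,\eta,wv}$; the inclusion $\mcD_{w^3v}^{\gamma+\beta,\zeta+\beta}(\bar\Gamma)^\#\subset\mcD_{w^3v}^{\gamma+\beta,\zeta+\beta}(\bar\Gamma)$, immediate from $\tri\cdot\tri\le\tri\cdot\tri^\#$, finishes the membership claim. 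The local Lipschitz estimate is obtained along the same lines, using the difference versions of Theorem \ref{theorem:singularMSE}, of Proposition \ref{def:variantsnorms}-\ref{def:variantsnorms1}, and of the reconstruction estimates; I expect the only genuinely delicate point to be this last step, since the difference analogue of Proposition \ref{def:variantsnorms}-\ref{def:variantsnorms1} carries an extra $\tri M^{(1)};M^{(2)}\tri_{\gamma,w}$ term, so one must invoke $\tri M^{(i)}\tri_{\gamma,w}\le R$ to keep $C_R$ a polynomial in $R$.
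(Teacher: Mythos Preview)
Your proposal is correct and is exactly the approach the paper takes: the paper's proof consists of the single sentence ``Combining Theorem \ref{theorem:singularMSE} with Proposition \ref{def:variantsnorms}-\ref{def:variantsnorms1}, we have the following result,'' and your write-up is a faithful unpacking of that combination, including the verification that the new hypothesis $\alpha_0>\gamma-\mfs_1$ is precisely what is needed for $\eta\wedge\alpha_0\in(\gamma-\mfs_1,\gamma]$ and that the weight in the conclusion becomes $w^2(wv)=w^3v$.
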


%
%
%
%

\section{Parabolic Anderson model}\label{section:application}

In this section, we study the parabolic Anderson model (PAM)
\begin{equation}\label{5:eq:PAM}
\big(\partial_1-a(x')\Delta+c\big)u(x)=b\big(u(x)\big)\xi(x')
\qquad(x\in(0,\infty)\times\bbT^2)
\end{equation}
with a spatial white noise $\xi$ defined on a probability space $(\Omega,\mcF,\bbP)$. Recall that $x_1$ in $x=(x_1,x_2,x_3)$ denotes the temporal variable and $x'=(x_2,x_3)$ denotes the spatial variables.
Throughout this section, we fix the function $b:\bbR\to\bbR$ in the class $C_b^3$,
and the function $a:\bbT^2\to\bbR$ which is $\alpha$-H\"older continuous for some $\alpha\in(0,1)$ and satisfies
$$
C_1\le a(x')\le C_2\qquad(x'\in\bbT^2)
$$
for some constants $0<C_1<C_2$.
The constant $c>0$ in the left hand side of \eqref{5:eq:PAM} is fixed later (see Proposition \ref{5:proposition:QtGtype} and \ref{5:proposition:convolutionops}). 
We prove the renormalizability of \eqref{5:eq:PAM} in Section \ref{5:sec:renorPAM}.
We fix $\alpha\in(0,1)$, $d=3$, $\mfs=(2,1,1)$, and $\ell=4$ throughout this section.

\subsection{Preliminaries}

We denote by $e_1=(1,0,0)$, $e_2=(0,1,0)$, and $e_3=(0,0,1)$ the canonical basis vectors of $\bbR^3$.
We define $C_b(\bbR\times\bbT^2)$ as the set of all bounded continuous functions $f:\bbR^3\to\bbR$ such that 
$$
f(x+e_i)=f(x)
$$
for any $x\in\bbR^3$ and $i\in\{2,3\}$.
For any $\beta>0$, we define $C_{\mfs}^\beta(\bbR\times\bbT^2)$ as the set of all elements $f\in C_b(\bbR\times\bbT^2)$ such that $\partial_x^kf\in C_b(\bbR\times\bbT^2)$ for any $|k|_{\mfs}<\beta$, and if $|k|_{\mfs}<\beta\le|k|_{\mfs}+\mfs_i$, we have
$$
|\partial^kf(x+he_i)-\partial^kf(x)|\lesssim|h|^{(\beta-|k|_\mfs)/\mfs_i}
$$
for any $x\in\bbR^3$ and $h\in\bbR$.

We denote by $P_{x_1}(x',y')$ the fundamental solution of the parabolic operator $\partial_1-a\Delta+c$.
Moreover, we introduce the anisotropic elliptic operator
$$
\mcL:=\big(\partial_1-a(x')\Delta\big)(\partial_1+\Delta)
$$
on $\bbR^3$ and denote by $Q_t(x,y)$ the fundamental solution of $\partial_t-\mcL+c$ with an additional variable $t>0$.
We recall from \cite[Appendix A]{BHK22} some properties of $P_{x_1}(x',y')$ and $Q_t(x,y)$.

\begin{prop}
[{\cite[Theorem 57]{BHK22}}]\label{5:proposition:QtGtype}
For any $C>0$, we define the function $G^{(C)}$ on $\bbR^3$ by
$$
G^{(C)}(x)=\exp\big\{-C\big(|x_1|^2+|x_2|^{4/3}+|x_3|^{4/3}\big)\big\}.
$$
For sufficiently large $c>0$, $\{Q_t\}_{t>0}$ is a $G^{(C)}$-type semigroup for some constant $C>0$, in the sense of Definition \ref{asmp1}. 
\end{prop}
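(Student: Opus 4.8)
The plan is to read off the four properties in Definition \ref{asmp1} from the heat-kernel estimates for the fourth-order operator $\partial_t-\mcL+c$, with $\mcL=(\partial_1-a(x')\Delta)(\partial_1+\Delta)$, established in \cite[Appendix A]{BHK22}; indeed property \ref{asmp1:gauss} is exactly \cite[Theorem 57]{BHK22} once the scaling is matched. First I would record the bookkeeping. With $\mfs=(2,1,1)$ and $\ell=4$ we have $|\mfs|=4$, so
$$
G^{(C)}_t(x)=t^{-1}\exp\Big\{-C\big(|x_1|^2/t+|x_2|^{4/3}/t^{1/3}+|x_3|^{4/3}/t^{1/3}\big)\Big\};
$$
the operator $\mcL$ is homogeneous of order $4$ with respect to $\mfs$ (containing $\partial_1^2$, the cross term $(1-a)\partial_1\Delta$, and $-a\Delta^2$), matching $\ell=4$, and after freezing $a$ its symbol has nonpositive real part $-\xi_1^2-a|\xi'|^4$, so $\partial_t u=\mcL u-cu$ is a well-posed anisotropic-parabolic evolution whose fundamental solution satisfies $Q_t(\cdot,y)\to\delta_y$ as $t\downarrow0$. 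I would also note that $G^{(C)}$ is an admissible choice of the function $G$ of Section \ref{section:preliminary}: it is nonnegative, continuous, and decays faster than any polynomial because the exponents $2$ and $4/3$ are positive.

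The algebraic items \ref{asmp1:semigroup} and \ref{asmp1:conservative} are then standard. The identity $\int Q_{t-s}(x,z)Q_s(z,y)\,dz=Q_t(x,y)$ is the reproducing property of the fundamental solution of a linear evolution equation, legitimate once one knows from \ref{asmp1:gauss} that $Q$ is jointly continuous and integrable. For conservativity, $\mcL$ annihilates constants (each of its two factors does), so the constant $1$ evolves into $u(t,\cdot)=e^{-ct}$, whence $\int Q_t(x,y)\,dy=e^{-ct}\to1$ as $t\downarrow0$.

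The substantive item is \ref{asmp1:gauss}, namely $|Q_t(x,y)|\le C_1 G^{(C)}_t(x-y)$ for all $t>0$, which is \cite[Theorem 57]{BHK22}. Its proof goes through Levi's parametrix method: one freezes $a(x')$ at a base point to obtain a constant-coefficient operator whose fundamental solution has an explicit Fourier representation exhibiting Gaussian decay in $x_1$ (from $e^{-t\xi_1^2}$) and decay of order $\exp(-c|x'-y'|^{4/3})$ in the spatial variable (the characteristic spatial decay of a fourth-order parabolic operator); the error coming from the $\alpha$-H\"older modulus of $a$ is absorbed into a Neumann series, and the convergence of that series \emph{globally in $t$}---not merely for small $t$---is precisely what forces $c$ to be taken large, the factor $e^{-ct}$ dominating the iterated error kernels. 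I would cite this rather than reprove it, observing only that requiring $c$ large is harmless since $c$ is a free parameter in \eqref{5:eq:PAM}. The single genuine obstacle of the proposition lies here.

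Finally, for \ref{asmp2:derivative}, differentiating the semigroup identity $Q_{2t}=\int Q_t(\cdot,z)Q_t(z,\cdot)\,dz$ in $t$ and using $\partial_t Q_t=(\mcL-c)Q_t$ reduces the bound to the fact that applying the order-$4$ operator $\mcL$ to the kernel costs the factor $t^{-4/\ell}=t^{-1}$ on the $G^{(C)}$-scale; the spatial-derivative bounds for $\partial_x^{{\bf k}}Q_t$ with $|{\bf k}|_\mfs\le4$ supplied alongside \ref{asmp1:gauss} in \cite{BHK22} then give $|\partial_t Q_t(x,y)|\le C_2\,t^{-1}G^{(C)}_t(x-y)$, after possibly shrinking $C$ once and for all so that a single Gaussian constant serves both \ref{asmp1:gauss} and \ref{asmp2:derivative}.
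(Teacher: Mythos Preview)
The paper does not give a proof of this proposition at all: it is stated purely as a citation of \cite[Theorem 57]{BHK22}, with no argument following it. So there is no ``paper's own proof'' to compare against; the authors simply import the result.

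Your sketch is a reasonable outline of how that result is obtained in \cite{BHK22}, and the bookkeeping (scaling $\mfs=(2,1,1)$, $\ell=4$, the form of $G_t^{(C)}$, the symbol computation, conservativity via $\mcL 1=0$) is correct. One caution on item \ref{asmp2:derivative}: you reduce $|\partial_t Q_t|\lesssim t^{-1}G_t^{(C)}$ to spatial derivative bounds $|\partial_x^{\bf k}Q_t|$ with $|{\bf k}|_\mfs\le4$, but with $a$ merely $\alpha$-H\"older the existence of four classical spatial derivatives of $Q_t$ is not automatic. In \cite{BHK22} the time-derivative estimate is established directly within the parametrix construction rather than deduced from full fourth-order spatial regularity, so your reduction, while morally right at the level of scaling, should be replaced by a direct citation of the relevant estimate there. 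Otherwise your outline matches the structure of the cited proof.
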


In what follows, we fix $C>0$ and write $G=G^{(C)}$.
For any $G$-controlled weight $w$ and any $\zeta\le0$, we can define the Besov space $C^{\zeta,Q}(w)$ in the sense of Definition \ref{def:BesovassociatedQ}.
We denote by $C^{\zeta,Q}(\bbR\times\bbT^2)$ the closure of $C_b(\bbR\times\bbT^2)$ in the space $C^{\zeta,Q}(1)$ with the flat weight $w=1$.

\begin{prop}\label{5:proposition:convolutionops}
For sufficiently large $c>0$, we have the following.
\begin{enumerate}
\renewcommand{\theenumi}{(\roman{enumi})}
\renewcommand{\labelenumi}{(\roman{enumi})}
\item\cite[Theorems 61 and Proposition 62]{BHK22}
Let $\beta\in(0,\alpha)$.
For any $g\in C_\mfs^\beta(\bbR\times\bbT^2)$, we can define the function on $\bbR\times\bbT^2$ by
\begin{align*}
\big((\partial_1-a\Delta+c)^{-1}g\big)(x) := 
\int_{(-\infty,x_1]\times\bbR^2}P_{x_1-y_1}(x',y')g(y)dy.
\end{align*}
Then $h=(\partial_1-a\Delta+c)^{-1}g$ is the unique solution of $(\partial_1-a\Delta+c)h=g$ such that $h\in C_\mfs^{\beta+2}(\bbR\times\bbT^2)$ and $\lim_{x_1\to-\infty}h(x)=0$.
\item\cite[Theorem 63]{BHK22}
The operator $c-\mcL$ has an inverse of the form
$$
(c-\mcL)^{-1} f = \int_0^\infty Q_tf\,dt
=\int_0^1Q_tf\, dt+Q_1(c-\mcL)^{-1}f.
$$
For any $\zeta\in(-4,0)\setminus\bbZ$, the map $(c-\mcL)^{-1}$ uniquely extends to a continuous operator from $C^{\zeta,Q}(\bbR\times\bbT^2)$ to $C_\mfs^{\zeta+4}(\bbR\times\bbT^2)$.
\item\label{5:proposition:convolutionops3}
\cite[Theorem 6]{BHK22}
We can decompose $(\partial_1 - a\Delta+c)^{-1}=K+S$, where
$$
K :=: \int_0^1K_t\, dt:= -\int_0^1 (\partial_1+\Delta)Q_t\, dt 
$$
and
$$
S := K_1(c-\mcL)^{-1} + c(\partial_1 - a\Delta+c)^{-1}(1+\partial_1+\Delta) (c-\mcL)^{-1}.
$$
Then $\{K_t\}_{t>0}$ is a $2$-regularizing kernel admissible for $\{Q_t\}_{t>0}$ in the sense of Definition \ref{asmp2}, where $\delta\in(2,2+\alpha)$ in the condition \ref{asmp2:holder}.
Moreover, for any $\zeta\in(-2,0)\setminus\{-1\}$ and $\varepsilon>0$, $S$ is continuous from $C^{\zeta,Q}(\bbR\times\bbT^2)$ to $C_\mfs^{\alpha\wedge(\zeta+2)+2-\varepsilon}(\bbR\times\bbT^2)$.
\end{enumerate}
\end{prop}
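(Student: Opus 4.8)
The plan is to deduce each item from the corresponding result of \cite{BHK22} cited in the statement, the real task being to match those conclusions with the definitions used here (Definitions \ref{asmp1} and \ref{asmp2}) and to keep track of the regularity indices; no new heat-kernel analysis is needed. For (i), I would take $g\in C_\mfs^\beta(\bbR\times\bbT^2)$ with $\beta\in(0,\alpha)$, write $h=(\partial_1-a\Delta+c)^{-1}g$ via the Duhamel formula against the fundamental solution $P_{x_1}(x',y')$, and invoke \cite[Theorem 61 and Proposition 62]{BHK22} for existence, uniqueness among solutions decaying as $x_1\to-\infty$, and the two-derivative gain $h\in C_\mfs^{\beta+2}(\bbR\times\bbT^2)$. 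The ceiling $\beta<\alpha$ is forced because the Schauder theory for $\partial_1-a\Delta$ only closes below the H\"older regularity of the coefficient $a$. For (ii), I would use \cite[Theorem 63]{BHK22}: the semigroup property and conservativity of $\{Q_t\}_{t>0}$ (Proposition \ref{5:proposition:QtGtype}) give $(c-\mcL)^{-1}=\int_0^\infty Q_t\,dt$, one splits the integral at $t=1$ to isolate the locally singular part $\int_0^1Q_t\,dt$, and the four-fold smoothing of $\mcL$ yields the map $C^{\zeta,Q}(\bbR\times\bbT^2)\to C_\mfs^{\zeta+4}(\bbR\times\bbT^2)$ for $\zeta\in(-4,0)\setminus\bbZ$, the integers being excluded as usual; note in passing that the $C^{\zeta,Q}$-norm here is exactly the semigroup-defined norm for which \cite{BHK22} proves the bound.

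The substantive item is (iii). The decomposition $(\partial_1-a\Delta+c)^{-1}=K+S$ with $K=-\int_0^1(\partial_1+\Delta)Q_t\,dt$ and the explicit $S$ is \cite[Theorem 6]{BHK22}; what remains is to check that $\{K_t:=-(\partial_1+\Delta)Q_t\}_{t>0}$ is a $2$-regularizing kernel admissible for $\{Q_t\}_{t>0}$ in the exact sense of Definition \ref{asmp2}. Property \ref{asmp2:convolutionwithQ} follows from the semigroup identity for $\{Q_t\}_{t>0}$ together with the fact that $\partial_1+\Delta$ commutes with the convolution in the second variable. Properties \ref{asmp2:gauss} and \ref{asmp2:holder} amount to the pointwise Gaussian-type bounds on $\partial_x^{\bf k}(\partial_1+\Delta)Q_t(x,y)$ and their $\mfs$-scaled H\"older increments in $x$, which are precisely the kernel estimates of \cite[Appendix A]{BHK22}; the gain $\bar\beta=2$ (hence the factor $t^{(2-|{\bf k}|_\mfs)/\ell-1}$) reflects that $Q_t$ has order $4$ while $\partial_1+\Delta$ has order $2$, and $\delta$ may be taken in $(2,2+\alpha)$ because the $x$-regularity of $Q_t(x,y)$ is capped by the $\alpha$-H\"older regularity of $a$. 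The continuity of $S$ from $C^{\zeta,Q}(\bbR\times\bbT^2)$ to $C_\mfs^{\alpha\wedge(\zeta+2)+2-\varepsilon}(\bbR\times\bbT^2)$ then follows by composing the mapping properties of $K_1(c-\mcL)^{-1}$ (from (ii)), of $(\partial_1-a\Delta+c)^{-1}$ (from (i)), and of the order-$2$ operator $1+\partial_1+\Delta$, the $\alpha$-ceiling again entering through the Schauder estimate of (i).

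I expect the main obstacle to be the precise verification of the H\"older condition \ref{asmp2:holder} at the threshold regularity and the exact bookkeeping of which exponents must be excluded, so that later the hypotheses of Theorem \ref{theorem:singularMSE} — in particular its condition (2) — can be met for the regularity structure used in the PAM; since all of this is contained in \cite{BHK22}, the write-up amounts to citing the right statements and translating notation rather than redoing the analysis. One should also fix $c>0$ large enough once and for all, so that Proposition \ref{5:proposition:QtGtype} applies and every kernel above enjoys genuine $G$-type decay.
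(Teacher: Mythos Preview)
Your proposal is correct and aligns with the paper's treatment: the paper does not give a proof of this proposition at all, instead stating each item as a direct citation of the corresponding result in \cite{BHK22} (Theorems 6, 61, 63 and Proposition 62). Your write-up goes further than the paper by explaining how the kernel estimates of \cite[Appendix A]{BHK22} translate into the conditions of Definitions \ref{asmp1} and \ref{asmp2} and by tracing the regularity bookkeeping, which is helpful but not required by the paper's own presentation.
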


\begin{rem}\label{rem3}
One needs to pick a constant $c>0$ large enough to construct the inverse operator $(c-\mcL)^{-1}$, see the proof of Theorem 63 in \cite{BHK22}. 
However, in the equation \eqref{5:eq:PAM}, $c$ can be an arbitrary constant. This is because we can replace the $c$ on the left-hand side with a larger constant $c'$ by adding a linear correction term $(c'-c)u(x)$ to the right-hand side.
This correction term has no serious influences on the discussion in this section. 
\end{rem}

\subsection{Regularity structure associated with PAM}

Following \cite{Hai14}, we prepare the regularity structure associated with PAM \eqref{5:eq:PAM}.

\begin{defi}\label{5:def:RSPAM}
For any fixed $\varepsilon\in(0,1/2)$, we define the regularity structure $\scT=(\bfA,\bfT,\bfG)$ of regularity $\alpha_0:=-1-\varepsilon$ as follows.
\begin{itemize}
\item[(1)]
(Index set) $\bfA=\{-1-\varepsilon,\, -2\varepsilon,\, -\varepsilon,\, 0,\, 1-\varepsilon,\, 1,\, 2-2\varepsilon,\, 2-\varepsilon\}$.
\item[(2)]
(Model space) $\bfT$ is an eleven dimensional linear space spanned by the symbols
$$
\Xi,\ \mcI(\Xi)\Xi,\ X_2\Xi,\ X_3\Xi,\ {\bf1},\ \mcI(\Xi),\ X_2,\ X_3,\ \mcI(\mcI(\Xi)\Xi),\ \mcI(X_2\Xi),\ \mcI(X_3\Xi).
$$
The direct sum decomposition $\bfT=\bigoplus_{\alpha\in \bfA}\bfT_\alpha$ is given by
\begin{align*}
\bfT_{-1-\varepsilon}&=\spa\{\Xi\},
&\bfT_{-2\varepsilon}&=\spa\{\mcI(\Xi)\Xi\},\\
\bfT_{-\varepsilon}&=\spa\{X_i\Xi\}_{i\in\{2,3\}},
&\bfT_0&=\spa\{{\bf1}\},\\
\bfT_{1-\varepsilon}&=\spa\{\mcI(\Xi)\},
&\bfT_1&=\spa\{X_i\}_{i\in\{2,3\}},\\
\bfT_{2-2\varepsilon}&=\spa\{\mcI(\mcI(\Xi)\Xi)\},
&\bfT_{2-\varepsilon}&=\spa\{\mcI(X_i\Xi)\}_{i\in\{2,3\}}.
\end{align*}
\item[(3)]
(Structure group) 
$\bfG$ is a group of continuous linear operators on $\bfT$ such that, for any $\Gamma\in \bfG$ and $\alpha\in\bfA$,
$$
(\Gamma-\id)\bfT_\alpha\subset\bfT_{<\alpha}.
$$
\end{itemize}
\end{defi}

Although the above structure group is a more generic one copied from Definition \ref{*def:ATG} than the more particular one defined in \cite[Section 8]{Hai14}, we use the above definition to avoid preparing algebraic matters such as Hopf algebras and comodules. The above one is sufficient for the discussion in this section. The admissible model defined later is also realized in the particular structure group defined in \cite[Section 8]{Hai14}.
In what follows, let $\scT$ be the regularity structure given in Definition \ref{5:def:RSPAM} with fixed $\varepsilon$.

We consider the models and modelled distributions as in Section \ref{section:singular} with slight modifications.
For any $r\ge0$, we define the weight function
$$
v_r(x)=e^{-r|x_1|}.
$$
It is easy to see that $v_r$ satisfies the inequality \eqref{weightmoderate} with $v_r^*(x):=e^{r|x_1|}$ and $v_r$ is $G$-controlled.
Moreover, $v_r$ satisfies the assumption of Remark \ref{proposition:whatisPixtau} with $w_1 (x)= e^{-2r\|x\|}$ and $w_2 (x)= e^{-3r\|x\|}$, where $\|x\| := \sum_{i=1}^3 |x_i|$.

\begin{defi}\label{def:smoothadmissible}
We say that a smooth model $M\in\scM_{v_r}(\scT)$ (defined on $\bbR^3$) is \emph{admissible} if it satisfies the following properties.
\begin{enumerate}
\renewcommand{\theenumi}{(\roman{enumi})}
\renewcommand{\labelenumi}{(\roman{enumi})}
\item
For any $x,y\in\bbR^3$ and $i\in\{2,3\}$, we have
$$
\big(\Pi_{x+e_i}(\cdot)\big)(y+e_i)=\big(\Pi_x(\cdot)\big)(y),\qquad
\Gamma_{(y+e_i)(x+e_i)}=\Gamma_{yx}.
$$ 
\item
We write $\Pi\Xi=\Pi_x\Xi$ since it is independent of $x$.
For any $x\in\bbR^3$, we have
\begin{align*}
\Pi_x{\bf1}=1,\qquad
\Pi_xX_i=(\cdot)_i-x_i,\qquad
\Pi_x\mcI(\Xi)=K(\cdot,\Pi\Xi)-K(x,\Pi\Xi),
\end{align*}
and
\begin{align*}
\Pi_x\mcI(\tau\Xi)=K(\cdot,\Pi_x\tau\Xi)-K(x,\Pi_x\tau\Xi)-\sum_{i\in\{2,3\}}((\cdot)_i-x_i)\partial_iK(x,\Pi_x\tau\Xi),
\end{align*}
where $\tau\in\{\mcI(\Xi),X_2,X_3\}$.
\item
For any $x,y\in\bbR^3$, we have
\begin{align*}
\Gamma_{yx}{\bf1}&={\bf1},
&\Gamma_{yx}X_i&=X_i+(y_i-x_i){\bf1},\\
\Gamma_{yx}\Xi&=\Xi,
&\Gamma_{yx}\mcI(\Xi)&=\mcI(\Xi)+\big(K(y,\Pi\Xi)-K(x,\Pi\Xi)\big){\bf1},
\end{align*}
and
\begin{align*}
& \Gamma_{yx}(\tau\Xi)=\tau\Xi+(\Pi_x\tau)(y)\Xi,\\
& \Gamma_{yx}\mcI(\tau\Xi)=\mcI(\tau\Xi)+(\Pi_x\tau)(y)\mcI(\Xi)\\
&\quad+\bigg(K(y,\Pi_x\tau\Xi)-K(x,\Pi_x\tau\Xi)-\sum_{i\in\{2,3\}}(y_i-x_i)\partial_iK(x,\Pi_x\tau\Xi)\bigg){\bf1}\\
&\quad+\sum_{i\in\{2,3\}}\big(\partial_iK(y,\Pi_y\tau\Xi)-\partial_iK(x,\Pi_x\tau\Xi)\big)X_i,
\end{align*}
where $\tau\in\{\mcI(\Xi),X_2,X_3\}$.
\item
For any $\tau\in\{\Xi,\mcI(\Xi)\Xi,X_2\Xi,X_3\Xi,{\bf1}\}$, we have
$$
\sup_{x\in\bbR^d}v_r(x)|(\Pi_x\tau)(x)|<\infty.
$$
\end{enumerate}
We define the closed subspace $\scM_r^\per(\scT)$ of $\scM_{v_r}(\scT)$ as the completion of the set of smooth admissible models. 
\end{defi}

By definition, the subspace 
$$\bfS:=\spa\{{\bf 1},\mcI(\Xi),X_2,X_3,\mcI(\mcI(\Xi)\Xi),\mcI(X_2\Xi),\mcI(X_3\Xi)\}$$
is invariant under the action of admissible models.
In the sense of Definition \ref{defTbarT}, the linear operator $\mcI:\bfT\to\bfS$ defined by
$$
\mcI\tau=\begin{cases}
\mcI\tau&(\tau\in\{\Xi,\mcI(\Xi),X_2\Xi,X_3\Xi\})\\
0&(\tau\in\{{\bf1},\mcI(\Xi),X_2,X_3,\mcI(\mcI(\Xi)\Xi),\mcI(X_2\Xi),\mcI(X_3\Xi)\})
\end{cases}
$$
is an abstract integration of order $2$, and for any $M\in\scM_r^\per(\scT)$, the pair $(M,M)$ is $K$-admissible for $\mcI$.
Therefore, we can define the operator $\mcK$ by Corollary \ref{corollary:singularMSE}.

The weight function $v_r$ is used only to ensure the global bound of the model $M$ defined from the white noise.
For the definition of singular modelled distributions, the flat weight $v_0=1$ is sufficient since we study the local-in-time solution theory of \eqref{5:eq:PAM}.

\begin{defi}
For any interval $I\subset\bbR$ and any $\eta\le\gamma$, we define $\mcD^{\gamma,\eta}(I;\Gamma)$ as the space of all functions $f:(I\setminus\{0\})\times \bbT^2\to\bfT_{<\gamma}$ such that
\begin{align*}
\lp f\rp_{\gamma,\eta;I}&:=\max_{\alpha<\gamma}\sup_{x\in(I\setminus\{0\})\times\bbT^2}
\frac{\|f(x)\|_\alpha}{\varphi (x)^{(\eta-\alpha)\wedge0}}<\infty,\\
\| f\|_{\gamma,\eta;I}
&:=\max_{\alpha<\gamma}\sup_{\substack{x,y\in(I\setminus\{0\})\times\bbT^2,\,x\neq y \\ \|y-x\|_\mfs\le\varphi (x,y)}}
\frac{\|\Delta^\Gamma_{yx}f\|_\alpha}{\varphi (x,y)^{\eta-\gamma}\|y-x\|_\mfs^{\gamma-\alpha}}
<\infty.
\end{align*}
We denote by $\mcD^{\gamma,\eta}(I,\bfS;\Gamma)$ the subspace of $\bfS$-valued functions in the class $\mcD^{\gamma,\eta}(I;\Gamma)$.
\end{defi}

\subsection{Convolution operators}\label{5:sec:KS}

We can rewrite the equation \eqref{5:eq:PAM} in the form
\begin{equation}\label{5:eq:PAM'}
u(x)=\int_{\bbR^2}P_{x_1}(x',y')u_0(y')dy'
+(\partial_1-a\Delta+c)^{-1}\big\{{\bf1}_{(0,\infty)\times\bbR^2}b(u)\xi\big\}(x),
\end{equation}
where $u_0$ is the initial value of $u$ at $x_1=0$.
In this subsection, we prepare some operators to reformulate the equation \eqref{5:eq:PAM'} at the level of singular modelled distributions.

First, the function $Pu_0(x):=\int_{\bbR^2}P_{x_1}(x',y')u_0(y')dy'$ can be lifted to the singular modelled distribution taking values in the polynomial structure.
For any sufficiently regular function $f$ on $(\bbR\setminus\{0\})\times\bbR^2$, we define the $\bfT$-valued function
$$
Lf(x):=f(x){\bf1}+(\partial_2f)(x)X_2+(\partial_3f)(x)X_3
\qquad(x\in(\bbR\setminus\{0\})\times\bbR^2).
$$

\begin{lem}[{\cite[Lemma 29]{BHK22}}]\label{5:lemma:initialcondition}
Let $\theta\in(0,1)$ and $u_0\in C^{\theta}(\bbT^2)$.
Then the lift $L(Pu_0)$ of the function ${\bf1}_{x_1>0}Pu_0(x)$ is in the class $\mcD^{\gamma,\theta}$ for any $\gamma\in(0,2)$ and we have
$$
\|L(Pu_0)\|_{\gamma,\theta;(0,t)}
\lesssim\|u_0\|_{C^{\theta}(\bbT^2)}
$$
for any $t>0$.
\end{lem}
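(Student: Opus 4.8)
The plan is to follow \cite[Lemma 29]{BHK22}: reduce the statement to two estimates, a bound on $\lp L(Pu_0)\rp_{\gamma,\theta;(0,t)}$ and a bound on $\|L(Pu_0)\|_{\gamma,\theta;(0,t)}$, both of which I would deduce from the classical parabolic kernel estimates for the fundamental solution $P_{x_1}(x',y')$ of $\partial_1-a\Delta+c$ recorded in \cite[Appendix A]{BHK22}. Concretely, for $u_0\in C^\theta(\bbT^2)$ with $\theta\in(0,1)$ these estimates give, for $x$ with $x_1>0$ and any multiindex $k$ in the spatial variables together with at most one time derivative (so $|k|_\mfs=2k_1+k_2+k_3\le 3$),
$$
|\partial^k(Pu_0)(x)|\lesssim \varphi (x)^{\theta-|k|_\mfs}\|u_0\|_{C^\theta(\bbT^2)},
$$
where $\varphi (x)=|x_1|^{1/2}\wedge1$; in particular $Pu_0$ is bounded. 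Since $\mcD^{\gamma,\theta}((0,t);\Gamma)$ only involves $x$ with $x_1>0$, the indicator ${\bf1}_{x_1>0}$ is irrelevant and $L(Pu_0)(x)=Pu_0(x){\bf1}+\sum_{i\in\{2,3\}}(\partial_iPu_0)(x)X_i$ there. I will treat the range $\gamma\in(1,2)$, in which $\bfT_{<\gamma}$ carries the two levels $\alpha=0$ (the ${\bf1}$-component) and $\alpha=1$ (the $X_i$-components); the case $\gamma\le1$ keeps only the ${\bf1}$-component and is strictly simpler.

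The bound on $\lp L(Pu_0)\rp_{\gamma,\theta;(0,t)}$ is then immediate from the pointwise estimates above: at level $0$ one needs $|Pu_0(x)|\lesssim\varphi (x)^{(\theta-0)\wedge0}=1$, which is boundedness, and at level $1$ one needs $|(\partial_iPu_0)(x)|\lesssim\varphi (x)^{\theta-1}$, which is the case $|k|_\mfs=1$; uniformity in $t$ is clear because $\varphi (x)\le1$ and $\theta-1<0$.

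For the norm $\|L(Pu_0)\|_{\gamma,\theta;(0,t)}$ I would first use $\Gamma_{yx}{\bf1}={\bf1}$ and $\Gamma_{yx}X_i=X_i+(y_i-x_i){\bf1}$ to compute
$$
\Delta^\Gamma_{yx}L(Pu_0)=\Big(Pu_0(y)-Pu_0(x)-\sum_i(\partial_iPu_0)(x)(y_i-x_i)\Big){\bf1}+\sum_i\big((\partial_iPu_0)(y)-(\partial_iPu_0)(x)\big)X_i,
$$
and then estimate the two components for $x,y\in(0,t)\times\bbT^2$ with $\|y-x\|_\mfs\le\varphi (x,y)$. In this regime $\min(x_1,y_1)\gtrsim\varphi (x,y)^2$, so the derivative bounds above hold with $\varphi (x)$ replaced by $\varphi (x,y)$ along the whole parabolic segment joining $x$ and $y$. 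For the level-$1$ component I would split the increment of $\partial_iPu_0$ into a spatial part, controlled by $|y'-x'|\,\varphi (x,y)^{\theta-2}$ via the mean value theorem in $x'$, and a temporal part, controlled by $|y_1-x_1|\,\varphi (x,y)^{\theta-3}$ via the order-$3$ bound on $\partial_1\partial_iPu_0$; since $|y'-x'|\le\|y-x\|_\mfs$, $|y_1-x_1|\le\|y-x\|_\mfs^2$ and $\|y-x\|_\mfs\le\varphi (x,y)$, both pieces are $\lesssim\|y-x\|_\mfs^{\gamma-1}\varphi (x,y)^{\theta-\gamma}\|u_0\|_{C^\theta(\bbT^2)}$. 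For the level-$0$ component I would split $Pu_0(y)-Pu_0(x)-\sum_i(\partial_iPu_0)(x)(y_i-x_i)$ into the second-order spatial Taylor remainder at time $y_1$, the time increment $Pu_0(y_1,x')-Pu_0(x_1,x')$, and the cross term $\sum_i\big((\partial_iPu_0)(y_1,x')-(\partial_iPu_0)(x_1,x')\big)(y_i-x_i)$, bounding them respectively by $|y'-x'|^2\varphi (x,y)^{\theta-2}$, $|y_1-x_1|\varphi (x,y)^{\theta-2}$ and $|y'-x'|\,|y_1-x_1|\,\varphi (x,y)^{\theta-3}$; by the same bookkeeping each is $\lesssim\|y-x\|_\mfs^{\gamma}\varphi (x,y)^{\theta-\gamma}\|u_0\|_{C^\theta(\bbT^2)}$, as required.

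I expect the only genuine difficulty to be the input parabolic estimates rather than the modelled-distribution bookkeeping: one needs the pointwise bounds on $Pu_0$ up to parabolic order $3$ — in particular the time-regularity $|\partial_1\partial_iPu_0(x)|\lesssim\varphi (x)^{\theta-3}\|u_0\|_{C^\theta(\bbT^2)}$, which cannot be extracted from the identity $\partial_1Pu_0=a\Delta Pu_0-cPu_0$ alone because $a$ is merely H\"older continuous — and one must keep all implicit constants independent of the distance of $x,y$ to the singular hyperplane $\{x_1=0\}$. These facts are precisely what \cite[Appendix A]{BHK22} supplies, so I would simply invoke them; with those in hand the remaining argument is the routine estimate sketched above, and the bound is clearly uniform in $t>0$.
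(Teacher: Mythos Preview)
The paper does not give its own proof of this lemma; it simply quotes \cite[Lemma 29]{BHK22}. Your sketch is exactly the standard argument behind that reference: read off the pointwise bounds $|\partial^k(Pu_0)(x)|\lesssim\varphi(x)^{\theta-|k|_\mfs}\|u_0\|_{C^\theta}$ from the kernel estimates in \cite[Appendix A]{BHK22}, compute $\Delta^\Gamma_{yx}L(Pu_0)$ using $\Gamma_{yx}{\bf1}={\bf1}$ and $\Gamma_{yx}X_i=X_i+(y_i-x_i){\bf1}$, and control the resulting Taylor-type remainders by mean-value arguments along the parabolic segment, using that $\|y-x\|_\mfs\le\varphi(x,y)$ keeps the intermediate points at distance $\gtrsim\varphi(x,y)^2$ from the hyperplane. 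Your decomposition at level $0$ and the bookkeeping $|y'-x'|\le\|y-x\|_\mfs$, $|y_1-x_1|\le\|y-x\|_\mfs^2$ are correct, and your remark that the order-$3$ bound $|\partial_1\partial_iPu_0|\lesssim\varphi^{\theta-3}$ must come from the kernel estimates rather than from differentiating the equation (since $a$ is only H\"older) is the one genuinely nontrivial point and is handled in the cited appendix. So the proposal is correct and matches the intended proof.
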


Next, to lift the second term on the right hand side of \eqref{5:eq:PAM'}, we prepare two lemmas.
The first one is used to ``extend" the domain of singular modelled distributions from $(0,t)\times\bbT^2$ to $\bbR\times\bbT^2$.

\begin{lem}\label{proposition:extensionMD}
We fix a smooth non-increasing function $\chi:(0,\infty)\to[0,1]$ such that
$$
\chi(t)=\begin{cases}
1&(0<t\le1),\\
0&(t\ge2).
\end{cases}
$$
For each $t>0$, we define the function $\chi_t:\bbR^3\to\bbR$ by setting $\chi_t(x)={\bf1}_{x_1>0}\chi(x_1/t)$.
Let $M=(\Pi,\Gamma)\in\scM_r^\per(\scT)$ with some $r>0$ and let $\gamma\in(0,1-2\varepsilon)$ and $\eta\le\gamma$.
For any $t\in(0,1]$ and any $f\in\mcD^{\gamma,\eta}((0,2t);\Gamma)$, we define the function
\begin{align*}
(E_tf)(x)=
P_{<\gamma}\big((L\chi_t)(x)\cdot f(x)\big),
\end{align*}
where the (partial) product $(\cdot)$ on $\bfT$ is defined by
\begin{align*}
{\bf1}\cdot\tau=\tau\quad(\tau\in\{\Xi,\mcI(\Xi)\Xi,X_2\Xi,X_3\Xi,{\bf1}\}),\qquad
X_i\cdot\Xi=X_i\Xi\quad(i\in\{2,3\}).
\end{align*}
(Other products do not appear due to the assumption on $\gamma$.)
Then the function $E_tf$ belongs to $\mcD^{\gamma,\eta\wedge\alpha_0}(\bbR;\Gamma)$ and satisfies
$$
\tri E_tf\tri_{\gamma,\eta\wedge\alpha_0;\bbR}\le C(1+\|\Gamma\|_{\gamma,v_r})\tri f\tri_{\gamma,\eta;(0,2t)}
$$
for some constant $C>0$ independent of $t$.
Moreover, $(E_tf)\vert_{(0,t]\times\bbT^2}= f\vert_{(0,t]\times\bbT^2}$.
\end{lem}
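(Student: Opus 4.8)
The plan is to reduce the statement to the elementary observation that, since $\chi_t$ is a function of the time variable $x_1$ alone, $\partial_2\chi_t=\partial_3\chi_t=0$, so $(L\chi_t)(x)=\chi_t(x)\mathbf{1}$ and hence
\[
(E_tf)(x)=P_{<\gamma}\big(\chi_t(x)f(x)\big)=\chi_t(x)f(x),
\]
with the convention that this is $0$ wherever $\chi_t$ vanishes --- in particular for $x_1\le 0$ and for $x_1\ge 2t$, where $f$ need not be defined. Thus $E_tf$ is supported in $\{0<x_1<2t\}$ and agrees with $f$ on $(0,t]\times\bbT^2$ because $\chi_t\equiv 1$ there, which is the last assertion. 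It then remains to estimate $\lp E_tf\rp_{\gamma,\eta\wedge\alpha_0;\bbR}$ and $\|E_tf\|_{\gamma,\eta\wedge\alpha_0;\bbR}$. The first is immediate: for $x_1\in(0,2t)$ we have $\|(E_tf)(x)\|_\alpha\le\|f(x)\|_\alpha\le\lp f\rp_{\gamma,\eta;(0,2t)}\varphi(x)^{(\eta-\alpha)\wedge 0}$, and since $\eta\wedge\alpha_0\le\eta$ and $\varphi(x)\le 1$ this is at most $\lp f\rp_{\gamma,\eta;(0,2t)}\varphi(x)^{(\eta\wedge\alpha_0-\alpha)\wedge 0}$, while outside the slab both sides vanish.

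For the H\"older seminorm, fix $x\neq y$ in $(\bbR\setminus\{0\})\times\bbT^2$ with $\|y-x\|_\mfs\le\varphi(x,y)$. Since $\|y-x\|_\mfs\ge|y_1-x_1|^{1/\mfs_1}$ whereas $\varphi(x,y)\le|x_1|^{1/\mfs_1}\wedge|y_1|^{1/\mfs_1}$, the constraint is incompatible with $x_1,y_1$ of opposite signs, and if $x_1,y_1\le 0$ then $\Delta^\Gamma_{yx}(E_tf)=0$; so assume $x_1,y_1>0$. Put $g(s)=\chi(s/t)$, smooth on $(0,\infty)$ with $\|g'\|_\infty\lesssim t^{-1}$. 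If $x_1,y_1\in(0,2t)$ we decompose
\[
\Delta^\Gamma_{yx}(E_tf)=g(y_1)\,\Delta^\Gamma_{yx}f+\big(g(y_1)-g(x_1)\big)\Gamma_{yx}f(x),
\]
and if at least one of $x_1,y_1$ leaves $(0,2t)$ then $E_tf$ vanishes there, so (as $g$ also vanishes there) $\Delta^\Gamma_{yx}(E_tf)$ is either $0$ or a single term $\big(g(y_1)-g(x_1)\big)\Gamma_{yx}f(x)$ or $\big(g(y_1)-g(x_1)\big)f(y)$. The leading term obeys $\|\Delta^\Gamma_{yx}f\|_\alpha\le\|f\|_{\gamma,\eta;(0,2t)}\varphi(x,y)^{\eta-\gamma}\|y-x\|_\mfs^{\gamma-\alpha}\le\|f\|_{\gamma,\eta;(0,2t)}\varphi(x,y)^{\eta\wedge\alpha_0-\gamma}\|y-x\|_\mfs^{\gamma-\alpha}$. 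For the terms carrying $g(y_1)-g(x_1)$ I would combine the model bound $\|\Gamma_{yx}f(x)\|_\alpha\lesssim(1+\|\Gamma\|_{\gamma,v_r})\lp f\rp_{\gamma,\eta;(0,2t)}\sum_{\beta\in[\alpha,\gamma)}\|y-x\|_\mfs^{\beta-\alpha}\varphi(x)^{(\eta-\beta)\wedge 0}$ (the factors $v_r^{\ast}$ only produce a $t$-independent constant, since $|y_1-x_1|\le 1$ and $x_1<2$ on the relevant set), resp.\ $\|f(y)\|_\alpha\le\lp f\rp_{\gamma,\eta;(0,2t)}\varphi(y)^{(\eta-\alpha)\wedge 0}$, with the interpolation $|g(y_1)-g(x_1)|\le\min\{1,\|g'\|_\infty|y_1-x_1|\}\lesssim\big(t^{-1}\|y-x\|_\mfs^{\mfs_1}\big)^{\rho}$ valid for any $\rho\in[0,1]$. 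Taking $\rho=(\gamma-\beta)/2$ (resp.\ $\rho=(\gamma-\alpha)/2$) makes the total power of $\|y-x\|_\mfs$ equal to $\gamma-\alpha$, and the leftover $t^{-\rho}$ is absorbed via $\varphi(x)\le(2t)^{1/2}$ (resp.\ $\varphi(y)\le(2t)^{1/2}$), valid on $\{g\neq 0\}$, so that $t^{-\rho}\varphi(x)^{(\eta-\beta)\wedge 0}\lesssim\varphi(x)^{(\beta\wedge\eta)-\gamma}\le\varphi(x,y)^{\eta\wedge\alpha_0-\gamma}$ (using $\beta\ge\alpha_0$), and similarly for the $f(y)$ term. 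Summing over $\beta$ and maximising over $\alpha<\gamma$ gives $\tri E_tf\tri_{\gamma,\eta\wedge\alpha_0;\bbR}\le C(1+\|\Gamma\|_{\gamma,v_r})\tri f\tri_{\gamma,\eta;(0,2t)}$, with $C$ depending only on $\chi$, $r$ and $\bfA$ and uniform in $t\in(0,1]$.

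The only point that is not routine bookkeeping is that the exponent $\rho=(\gamma-\beta)/2$ (and $(\gamma-\alpha)/2$) must stay in $(0,1)$, which requires $\gamma-\beta\le\gamma-\alpha_0<\mfs_1=2$; this is exactly what the standing assumption $\gamma<1-2\varepsilon$ gives, since $\alpha_0=-1-\varepsilon$. The $t^{-1}$-blow-up in $\|g'\|_\infty$ is otherwise harmless because $g$ is active only when $x_1$ or $y_1$ is comparable to $t$, and the constraint $\|y-x\|_\mfs\le\varphi(x,y)$ then forces $\varphi(x)$ (resp.\ $\varphi(y)$) to be $\lesssim t^{1/2}$; the degradation of the lower index from $\eta$ to $\eta\wedge\alpha_0$ is produced by the smallest summand $\beta=\alpha_0$ in the sum above.
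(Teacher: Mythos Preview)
Your proof is correct and takes a genuinely more elementary route than the paper's. The key simplification you exploit is that $\chi_t$ depends on $x_1$ alone, so $\partial_2\chi_t=\partial_3\chi_t=0$ and $L\chi_t=\chi_t\mathbf{1}$; hence $E_tf=\chi_t f$ is just scalar multiplication by a temporal cutoff, and your direct decomposition
\[
\Delta^\Gamma_{yx}(E_tf)=g(y_1)\,\Delta^\Gamma_{yx}f+(g(y_1)-g(x_1))\,\Gamma_{yx}f(x)
\]
together with the interpolation $|g(y_1)-g(x_1)|\lesssim(t^{-1}\|y-x\|_\mfs^2)^\rho$ handles everything by hand. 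The paper instead treats $L\chi_t$ as a modelled distribution in $\mcD^{\gamma',0}$ for some $\gamma'\in(1,2)$ and invokes the general product estimate for singular modelled distributions \cite[Proposition~6.12]{Hai14} as a black box to get the bound on $(0,2t)$; it then extends across the boundary $x_1=2t$ by an intermediate-point trick (inserting $z=(2t,y')$ and using $(E_tf)(z)=0$), and finally swaps $x$ and $y$ via the model bound to cover the remaining case. Your argument is self-contained, avoids the appeal to \cite{Hai14}, and makes transparent that the degradation $\eta\to\eta\wedge\alpha_0$ arises from the summand $\beta=\alpha_0$ in the $\Gamma_{yx}f(x)$ expansion. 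The paper's approach, on the other hand, would generalise verbatim to cutoffs depending also on the spatial variables, where $L\chi_t$ would carry nontrivial $X_i$-components and your scalar reduction would no longer apply.
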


\begin{proof}
We can check that $\tri L\chi_t\tri_{\gamma',0;\bbR}\lesssim1$ for any $\gamma'\in(1,2)$  by definition, so by applying the continuity of the multiplication of modelled distributions \cite[Proposition 6.12]{Hai14}, we have
$$
\tri E_tf\tri_{\gamma,\eta\wedge\alpha_0;(0,2t)}\lesssim\tri f\tri_{\gamma,\eta;(0,2t)}.
$$
We can extend it into $\tri E_tf\tri_{\gamma,\eta\wedge\alpha_0;(0,2t]}\lesssim\tri f\tri_{\gamma,\eta;(0,2t)}$ by the uniform continuity.
To show that $E_tf\in\mcD^{\gamma,\eta\wedge\alpha_0}((0,\infty);\bbR)$, we pick $x\in[2t,\infty)\times\bbT^2$ and $y\in(0,2t)\times\bbT^2$.
By setting $z=(2t,y')$ we have
\begin{align*}
&\|(E_tf)(y)-\Gamma_{yx}(E_tf)(x)\|_\alpha\\
&\le\|(E_tf)(y)-\Gamma_{yz}(E_tf)(z)\|_\alpha+\|\Gamma_{yz}(E_tf)(z)-\Gamma_{yx}(E_tf)(x)\|_\alpha\\
&\le \|E_tf\|_{\gamma,\eta\wedge\alpha_0;(0,2t]}\,\varphi (y)^{\eta\wedge\alpha_0-\gamma}\|y-z\|_\mfs^{\gamma-\alpha}\\
&\lesssim\tri f\tri_{\gamma,\eta;(0,2t)}\,\varphi (x,y)^{\eta\wedge\alpha_0-\gamma}\|y-x\|_\mfs^{\gamma-\alpha}.
\end{align*}
In the second inequality, we use the fact that $(E_tf)(z)=(E_tf)(x)=0$ because of the definition of $E_t$.
For the case that $x\in(0,2t)\times\bbT^2$ and $y\in[2t,\infty)\times\bbT^2$, by the properties of models we have
\begin{align*}
&v_r(x)\|(E_tf)(y)-\Gamma_{yx}(E_tf)(x)\|_\alpha
=v_r(x)\|\Gamma_{yx}\{\Gamma_{xy}(E_tf)(y)-(E_tf)(x)\}\|_\alpha\\
&\le\|\Gamma\|_{\gamma,v_r}v_r^*(y-x)\sum_{\alpha\le\beta<\gamma}\|y-x\|_\mfs^{\beta-\alpha}\|\Gamma_{xy}(E_tf)(y)-(E_tf)(x)\|_\beta
\\
&\lesssim\|\Gamma\|_{\gamma,v_r}\tri f\tri_{\gamma,\eta;(0,2t)}\,v_r^*(y-x)
\varphi (x,y)^{\eta\wedge\alpha_0-\gamma}\|y-x\|_\mfs^{\gamma-\alpha}.
\end{align*}
Note that the supremum in the definition of the norm $\|\cdot\|_{\gamma,\eta;I}$ is taken over $\|y-x\|_\mfs\le\varphi (x,y)$. Since $|y_1|\le1+|x_1|\le3$ in this region, the factors $v_r(x)$ and $v_r^*(y-x)$ are bounded both above and below.
Thus we can ignore these weights and have $E_tf\in\mcD^{\gamma,\eta\wedge\alpha_0}((0,\infty);\Gamma)$.
On the other hand, $E_tf\in\mcD^{\gamma,\eta\wedge\alpha_0}((-\infty,0);\Gamma)$ is obvious from the definition.
Since $\|y-x\|_\mfs\le\varphi (x,y)$ implies that $x_1$ and $y_1$ have the same sign, we obtain the assertion.
\end{proof}

\begin{rem}
Although the norm of $\Pi$-parts of models is perhaps different from the original one in \cite{Hai14}, the norms of $\Gamma$-part and modelled distributions are not different since the semigroup $\{Q_t\}$ is not used for them. Because of this, here and in some places below (Lemma \ref{def:variantsnorms2} and Theorem \ref{5:eq:MDPAM}), we can use the continuity results of modelled distribution obtained in \cite{Hai14}. 
\end{rem}

Next, we recall from \cite{Hai14} a different norm of singular modelled distributions.
The following result holds for any singular modelled distributions on $\bbR^d$ taking values in arbitrary regularity structures and any models.

\begin{lem}[{\cite[lemma 6.5]{Hai14}}]\label{def:variantsnorms2}
Let $\eta\le\gamma$ and $r\ge0$, and let $I\subset\bbR$ be an interval.
For any function $f:(I\setminus\{0\})\times\bbT^2\to\bfT_{<\gamma}$, we define
$$
\lp f\rp_{\gamma,\eta;I}^\circ:=\max_{\alpha<\gamma}\sup_{x\in(I\setminus\{0\})\times\bbT^2}
\frac{\|f(x)\|_\alpha}{\varphi (x)^{\eta-\alpha}}.
$$
Then the inequality $\lp f\rp_{\gamma,\eta;I}\le\lp f\rp_{\gamma,\eta;I}^\circ$ obviously holds.
Conversely, if 
\begin{equation*}
\lim_{x_1\to0}P_\alpha f(x)=0
\end{equation*}
holds for any $\alpha<\eta$, then there exists a polynomial $p(\cdot)$ such that, for any $M\in\scM_r^\per(\scT)$ and $f\in\mcD^{\gamma,\eta}(I;\Gamma)$, we have
$$
\lp f\rp_{\gamma,\eta;I}^\circ\lesssim p(\|\Gamma\|_{\gamma,v_r})\tri f\tri_{\gamma,\eta;I}.
$$
\end{lem}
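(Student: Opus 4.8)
The plan is to prove the nontrivial inequality by a dyadic telescoping towards the hyperplane $P:=\{x_1=0\}$, as in \cite[Lemma 6.5]{Hai14}. For $\alpha\ge\eta$ one has $\varphi (x)^{\eta-\alpha}\ge1$, so $\|f(x)\|_\alpha/\varphi (x)^{\eta-\alpha}\le\lp f\rp_{\gamma,\eta;I}$ for free; and if $\varphi (x)=1$ or $I$ is bounded away from $0$ (so that $\varphi $ is bounded below on $I$), the estimate is again immediate. Hence I may fix $\alpha\in\bfA$ with $\alpha<\eta$, assume $0\in\bar I$, and fix $x=(x_1,x')\in(I\setminus\{0\})\times\bbT^2$ with $0<|x_1|<1$; with $\delta:=\varphi (x)=|x_1|^{1/2}$, the goal is to bound $\|P_\alpha f(x)\|_\alpha$ by a constant times $p(\|\Gamma\|_{\gamma,v_r})\tri f\tri_{\gamma,\eta;I}\,\delta^{\eta-\alpha}$, uniformly in $M$, $f$, $x$.

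First I would introduce the chain $x^{(n)}:=(2^{-n}x_1,x')$, $n\ge0$; it stays in $(I\setminus\{0\})\times\bbT^2$ (here $0\in\bar I$ is used), tends to $(0,x')\in P$, and satisfies the exact identities $\|x^{(n)}-x^{(n+1)}\|_\mfs=\varphi (x^{(n)},x^{(n+1)})=\varphi (x^{(n+1)})=2^{-(n+1)/2}\delta$, so that consecutive points lie in the admissible range of the increment seminorm $\|\cdot\|_{\gamma,\eta;I}$. The difficulty is that the components $P_\beta f$ with $\beta\ge\eta$ are only bounded by $\lp f\rp_{\gamma,\eta;I}\,\varphi (\cdot)^{\eta-\beta}$, which blows up as $\varphi \downarrow0$; to neutralise this I would telescope not $f$ but its truncation $g:=P_{<\eta}f$, which tends to $0$ along the chain by hypothesis. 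Since $\Gamma_{yx}$ preserves $\bfT_{<\eta}$, one has $g(y)=\Gamma_{yx}g(x)+\widetilde\Delta_{yx}$ with $\widetilde\Delta_{yx}:=P_{<\eta}\big(\Gamma_{yx}P_{\ge\eta}f(x)\big)+P_{<\eta}\Delta^\Gamma_{yx}f$, and iterating this along the chain (using the group law $\Gamma_{x^{(0)}x^{(n)}}=\Gamma_{x^{(0)}x^{(1)}}\cdots\Gamma_{x^{(n-1)}x^{(n)}}$) gives, for every $N$,
\[
g(x)=\Gamma_{x^{(0)}x^{(N)}}g(x^{(N)})+\sum_{n=0}^{N-1}\Gamma_{x^{(0)}x^{(n)}}\widetilde\Delta_{x^{(n)}x^{(n+1)}}.
\]

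It then remains to estimate the pieces. Using the model bound for $\Gamma$ --- whose weight ratios $v_r^*/v_r$ along the chain are bounded by a constant depending only on $r$, since all the time coordinates lie in an interval of length $\le1$ --- together with $\|P_{\beta'}f(x^{(n+1)})\|_{\beta'}\lesssim\lp f\rp_{\gamma,\eta;I}\,\varphi (x^{(n+1)})^{\eta-\beta'}$ for $\beta'\ge\eta$ (now with exponent genuinely $\eta-\beta'$, since $(\eta-\beta')\wedge0=\eta-\beta'$) and the increment bound for $\Delta^\Gamma f$, one obtains $\|\widetilde\Delta_{x^{(n)}x^{(n+1)}}\|_\beta\lesssim(1+\|\Gamma\|_{\gamma,v_r})\tri f\tri_{\gamma,\eta;I}\,(2^{-(n+1)/2}\delta)^{\eta-\beta}$ for every $\beta<\eta$; hence, since $\|x^{(0)}-x^{(n)}\|_\mfs\le\delta\le1$ and $\delta^{\beta-\alpha}(2^{-(n+1)/2}\delta)^{\eta-\beta}=2^{-(n+1)(\eta-\beta)/2}\delta^{\eta-\alpha}$,
\[
\big\|P_\alpha\big(\Gamma_{x^{(0)}x^{(n)}}\widetilde\Delta_{x^{(n)}x^{(n+1)}}\big)\big\|_\alpha
\lesssim(1+\|\Gamma\|_{\gamma,v_r})^2\,\tri f\tri_{\gamma,\eta;I}\,\delta^{\eta-\alpha}\sum_{\substack{\beta\in\bfA\\ \alpha\le\beta<\eta}}2^{-(n+1)(\eta-\beta)/2}.
\]
Since $\bfA\cap(-\infty,\eta)$ is finite and $\eta-\beta>0$ for each such $\beta$, summing over $n\ge0$ gives a convergent series; meanwhile the boundary term $\Gamma_{x^{(0)}x^{(N)}}g(x^{(N)})$ tends to $0$ as $N\to\infty$, because $g(x^{(N)})\to0$ and $\Gamma_{x^{(0)}x^{(N)}}$ is uniformly bounded. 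Letting $N\to\infty$ in the displayed identity yields $\|P_\alpha f(x)\|_\alpha=\|P_\alpha g(x)\|_\alpha\lesssim(1+\|\Gamma\|_{\gamma,v_r})^2\,\tri f\tri_{\gamma,\eta;I}\,\varphi (x)^{\eta-\alpha}$, which is the claim with $p(t)=(1+t)^2$. The crux --- and the step I expect to require the most care --- is the design of the first display: one must truncate at $\eta$, for if the single group element $\Gamma_{x^{(0)}x^{(N)}}$ were allowed to act on the full $f(x^{(N)})$, the components of $f$ of order $>\eta$ evaluated at the tiny scale $\varphi (x^{(N)})$ would make the boundary term diverge as $N\to\infty$.
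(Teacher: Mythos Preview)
Your argument is correct and is precisely the dyadic telescoping argument of \cite[Lemma~6.5]{Hai14}: truncate at level $\eta$, telescope $g=P_{<\eta}f$ along the chain $x^{(n)}=(2^{-n}x_1,x')$, and use that the cross-term $P_{<\eta}\Gamma_{yx}P_{\ge\eta}f(x)$ and the increment $P_{<\eta}\Delta^\Gamma_{yx}f$ both scale like $\varphi(x^{(n+1)})^{\eta-\beta}$, which is summable. The paper itself does not supply a proof of this lemma; it quotes the statement from \cite{Hai14} and relies on it directly (see the Remark preceding the lemma, which explains why the $\Gamma$- and modelled-distribution norms here coincide with Hairer's and hence his result applies verbatim). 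Your write-up is thus a faithful reconstruction of the cited proof rather than an alternative route.
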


In the end, we can lift the operator $(\partial_1-a\Delta+c)^{-1}$ to the level of singular modelled distributions.
Recall the decomposition $(\partial_1-a\Delta+c)^{-1}=K+S$ from Proposition \ref{5:proposition:convolutionops}-\ref{5:proposition:convolutionops3}.

\begin{thm}\label{theorem:localintimeMSE}
Let $\gamma\in(0,\alpha\wedge(1-2\varepsilon))$, $\eta\in(\gamma-2,\gamma]$, $r\ge0$, and $t\in(0,1]$.
For any $M=(\Pi,\Gamma)\in\scM_r^\per(\scT)$, $f\in\mcD^{\gamma,\eta}((0,2t);\Gamma)$, and $\delta\in(0,\gamma+2]$, we define the function
$$
\mcP_t^\delta f:=P_{<\delta}\{\mcK(E_tf)+L(S(\mcR E_t f))\}.
$$
Then $\mcP_t^\delta f\in\mcD_{v_{3r}}^{\delta,\eta\wedge\alpha_0+2}(\bbR;\Gamma)$.
If $M$ is smooth and admissible in the sense of Definition \ref{def:smoothadmissible}, then we have
\begin{equation}\label{theorem:localintimeMSE:eq1}
\mcR(\mcP_t^\delta f)(x)=(\partial_1-a\Delta+c)^{-1}(\mcR E_tf)(x).
\end{equation}
Moreover, there exists a polynomial $p(\cdot)$ such that, for any $\kappa\ge0$ we have
\begin{equation}\label{theorem:localintimeMSE:eq2}
\tri \mcP_t^\delta f \tri_{\delta,\eta\wedge\alpha_0+2-\kappa;(0,2t)}
\le p(\tri M\tri_{\gamma,v_r})\, t^{\kappa/2}\tri f\tri_{\gamma,\eta;(0,2t)}.
\end{equation}
Finally, there exists a polynomial $q(\cdot)$ such that
\begin{align*}
& \tri \mcP_t^\delta f^{(1)};\mcP_t^\delta f^{(2)} \tri_{\delta,\eta\wedge\alpha_0+2-\kappa;(0,2t)}\\
&\le q(R)\, t^{\kappa/2}
\big(\tri M^{(1)};M^{(2)}\tri_{\gamma,v_r}+\tri f^{(1)};f^{(2)}\tri_{\gamma,\eta;(0,2t)}\big)
\end{align*}
for any $M^{(i)}\in\scM_r^\per(\scT)$ and $f^{(i)}\in\mcD^{\gamma,\eta}((0,2t);\Gamma^{(i)})$ with $i\in\{1,2\}$ such that $\tri M^{(i)}\tri_{\gamma,v_r}\le R$ and $\tri f^{(i)}\tri_{\gamma,\eta;(0,2t)}\le R$.
\end{thm}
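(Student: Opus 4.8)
The plan is to realize $\mcP_t^\delta f$ as a four-fold composition — the extension $E_t$, the reconstruction $\mcR$, the abstract integration $\mcK$ together with the smoothing remainder $S$, and finally the polynomial lift $L$ followed by the truncation $P_{<\delta}$ — and to propagate through this composition both the norm estimates and the temporal weight $\varphi$, mirroring the structure of the Duhamel formula \eqref{5:eq:PAM'}. First I would apply Lemma \ref{proposition:extensionMD}: since $\gamma\in(0,1-2\varepsilon)$ this gives $E_tf\in\mcD^{\gamma,\eta\wedge\alpha_0}(\bbR;\Gamma)$, supported in the strip $\{0<x_1<2t\}\times\bbT^2$, with $\tri E_tf\tri_{\gamma,\eta\wedge\alpha_0;\bbR}\lesssim(1+\|\Gamma\|_{\gamma,v_r})\tri f\tri_{\gamma,\eta;(0,2t)}$ and $E_tf=f$ on $(0,t]\times\bbT^2$. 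Then Corollary \ref{corollary:singularreconstruction}, whose hypotheses hold here ($\gamma>0$, $\eta\wedge\alpha_0\in(\gamma-2,\gamma]$ because $\alpha_0=-1-\varepsilon>\gamma-2$ follows from $\gamma<1-2\varepsilon$, and $v_r^2$ is $G$-controlled), produces the unique reconstruction $\mcR E_tf\in C^{\zeta,Q}(v_{2r})$ with $\zeta=\eta\wedge\alpha_0\wedge0=\eta\wedge\alpha_0$, controlled by $\tri M\tri_{\gamma,v_r}\tri E_tf\tri$.

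\textbf{Reconciling the function spaces and applying $\mcK$, $S$.} A first point needing care is that $K$ and $S$ in Proposition \ref{5:proposition:convolutionops} are stated on the flat torus spaces $C^{\zeta,Q}(\bbR\times\bbT^2)$ and $C^{\theta}_\mfs(\bbR\times\bbT^2)$, whereas $\mcR E_tf$ lives a priori in the $\varphi$-singular, $v_{2r}$-weighted space. Since $E_tf$ — hence $\Pi_\bullet E_tf(\bullet)$ and $\mcR E_tf$ — is concentrated in the bounded strip $\{0<x_1<2t\}$ on which $v_{2r}$ is comparable to $1$, and since $Q_s$ obeys the Gaussian-type bound of Definition \ref{asmp1}, one checks that $\mcR E_tf$ in fact defines an element of $C^{\zeta,Q}(\bbR\times\bbT^2)$ with norm controlled (up to a $t,r$-dependent constant) by its weighted norm; this makes $\mcR E_tf$ an admissible argument for $\mcN(\cdot;E_tf,\mcR E_tf)$ in Corollary \ref{corollary:singularMSE}, for Theorem \ref{theorem:KR=RK}, and for $S$. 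With this in hand: $\mcK(E_tf)\in\mcD_{v_{3r}}^{\gamma+2,\zeta+2}(\Gamma)$ by Corollary \ref{corollary:singularMSE}, with $\beta=\bar\beta=2$, $w=v_r$, $v=1$, $w^3v=v_{3r}$ $G$-controlled, $\alpha_0>\gamma-2$, and $\gamma+2<\delta_K$ for a choice of the kernel's H\"older exponent $\delta_K\in(\gamma+2,2+\alpha)$ permitted by Proposition \ref{5:proposition:convolutionops}-\ref{5:proposition:convolutionops3} (using $\gamma<\alpha$). On the other hand $S(\mcR E_tf)\in C^{\theta}_\mfs(\bbR\times\bbT^2)$ with $\theta=\alpha\wedge(\zeta+2)+2-\varepsilon'$, and since $\alpha\wedge(\zeta+2)>\gamma$ we may take $\varepsilon'>0$ small enough that $\theta>\delta$, whence the polynomial lift $L(S(\mcR E_tf))$ lies in $\mcD^{\delta,\zeta+2}(\bbR;\Gamma)$. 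Adding these two contributions and applying $P_{<\delta}$ (continuous on singular modelled distributions by Proposition \ref{def:variantsnorms}-\ref{def:variantsnorms0}, the resulting $\|\Gamma\|$-factor being absorbed into $p$) yields $\mcP_t^\delta f\in\mcD_{v_{3r}}^{\delta,\eta\wedge\alpha_0+2}(\bbR;\Gamma)$ with a bound, uniform in $t$, of the form $\tri\mcP_t^\delta f\tri_{\delta,\eta\wedge\alpha_0+2,v_{3r};\bbR}\le p(\tri M\tri_{\gamma,v_r})\tri f\tri_{\gamma,\eta;(0,2t)}$.

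\textbf{The identity \eqref{theorem:localintimeMSE:eq1}.} Assume $M$ is smooth and admissible. Reconstruction is linear, and $P_{<\delta}$ does not change it when $\delta>0$ (the truncated components have strictly positive homogeneity, so their model evaluations vanish at the base point and do not affect the reconstruction bound); hence $\mcR(\mcP_t^\delta f)=\mcR(\mcK(E_tf))+\mcR(L(S(\mcR E_tf)))$. Theorem \ref{theorem:KR=RK} — applicable since $\zeta+\bar\beta=\eta\wedge\alpha_0+2>\gamma>0$ and $(M,M)$ is $K$-admissible for $\mcI$ — identifies the first term with $K(\mcR E_tf)$, while the second equals $S(\mcR E_tf)$ because the reconstruction of a polynomial lift is the underlying function. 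Invoking the decomposition $(\partial_1-a\Delta+c)^{-1}=K+S$ from Proposition \ref{5:proposition:convolutionops}-\ref{5:proposition:convolutionops3} closes the argument. (Equivalently one may compute $\mcR(\mcP_t^\delta f)(x)=\big(\Pi_x\mcP_t^\delta f(x)\big)(x)$ via Proposition \ref{proposition:reconstsmooth}, evaluating it using the admissibility relations of Definition \ref{def:smoothadmissible} and the cancellation $\Pi_x(\mcI+\mcJ(x))\tau=K(\cdot,\Pi_x\tau)$.)

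\textbf{The local-in-time estimates and the main obstacle.} Estimate \eqref{theorem:localintimeMSE:eq2} is a soft consequence of the uniform $\bbR$-bound: everything built above is supported in $\{0<x_1<2t\}$, where $\varphi(x)\le(2t)^{1/2}$ and $v_{3r}$ is comparable to $1$; so restricting the $\bbR$-bound to $(0,2t)$ and replacing the secondary exponent $\eta\wedge\alpha_0+2$ by $\eta\wedge\alpha_0+2-\kappa$ costs precisely a factor $\varphi(x)^{\kappa}\le(2t)^{\kappa/2}\lesssim t^{\kappa/2}$ in both $\lp\cdot\rp$ and $\|\cdot\|$, which gives the claimed gain with the exponential weight dropped. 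The Lipschitz estimate follows by running the whole chain for the multilinear differences, using the difference versions of Lemma \ref{proposition:extensionMD}, Corollaries \ref{corollary:singularreconstruction} and \ref{corollary:singularMSE}, Theorem \ref{theorem:KR=RK}, and the linearity of $S$, $L$, $P_{<\delta}$; composing the quadratic Schauder dependence with the reconstruction and extension bounds produces the polynomial $q$. The delicate part is the reconciliation in the second step: the operators $K,S$ of \cite{BHK22} are supplied on the flat torus spaces $C^{\zeta,Q}(\bbR\times\bbT^2)$, $C^{\theta}_\mfs(\bbR\times\bbT^2)$, while $\mcR E_tf$ lives in the $\varphi$-singular, $v_{2r}$-weighted space of Section \ref{section:singular}; bridging this via support in the strip and the decay of $Q$, and tracking both the temporal singularity and the exponential weight consistently through the fivefold composition with all constants uniform in $t\in(0,1]$, is where the real work lies.
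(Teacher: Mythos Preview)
Your treatment of the membership $\mcP_t^\delta f\in\mcD_{v_{3r}}^{\delta,\eta\wedge\alpha_0+2}(\bbR;\Gamma)$ and of the identity \eqref{theorem:localintimeMSE:eq1} is essentially the paper's argument and is fine. The gap is in your ``soft'' derivation of the time gain in \eqref{theorem:localintimeMSE:eq2}.

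Your claim that lowering the secondary exponent from $\eta\wedge\alpha_0+2$ to $\eta\wedge\alpha_0+2-\kappa$ costs a factor $\varphi(x)^\kappa\le(2t)^{\kappa/2}$ ``in both $\lp\cdot\rp$ and $\|\cdot\|$'' is correct for $\|\cdot\|$ (the exponent there is $\eta-\gamma$, with no truncation), but \emph{fails for $\lp\cdot\rp$}. The local bound $\lp\cdot\rp_{\delta,\theta}$ carries the exponent $(\theta-\alpha)\wedge0$, so for any component with $\alpha<\theta-\kappa$ --- in particular the ${\bf1}$-component $\alpha=0$, which is always present since $\mcP_t^\delta f$ is $\bfS$-valued and $\theta=\eta\wedge\alpha_0+2>0$ --- both the $\theta$- and the $(\theta-\kappa)$-norms reduce to $\sup_x\|g(x)\|_0$ with \emph{no} $\varphi$-weight whatsoever, and your argument gives no small factor at all. (For $\kappa\ge\theta$ the situation is no better: one would need $\varphi(x)^{\kappa-\theta}\lesssim t^{\kappa/2}$ uniformly on $(0,2t)$, which is false.)

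The paper closes this gap via Lemma \ref{def:variantsnorms2}: it passes through the alternative seminorm $\lp\cdot\rp^\circ$ (with exponent $\eta-\alpha$, no truncation), for which the factor $\varphi(x)^\kappa$ \emph{does} appear uniformly, yielding
\[
\tri g\tri_{\delta,\theta-\kappa;(0,2t)}\lesssim\tri g\tri_{\delta,\theta-\kappa;(0,2t)}^\circ
\lesssim t^{\kappa/2}\tri g\tri_{\delta,\theta;(0,2t)}^\circ
\lesssim t^{\kappa/2}\tri g\tri_{\delta,\theta;(0,2t)}.
\]
The price for the last inequality is the hypothesis of Lemma \ref{def:variantsnorms2}: one must check that $P_\alpha(\mcP_t^\delta f)(x)\to0$ as $x_1\downarrow0$ for each $\alpha<\theta$. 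Since $\theta\le1-\varepsilon$ and the indices occurring in $\bfS$ are $0,1-\varepsilon,1,2-2\varepsilon,2-\varepsilon$, only $\alpha=0$ is relevant. The paper identifies the ${\bf1}$-component of $\mcP_t^\delta f(x)$ (for smooth admissible $M$, then by density) with $(\partial_1-a\Delta+c)^{-1}(\mcR E_tf)(x)$ via Proposition \ref{proposition:reconstsmooth} and \eqref{theorem:localintimeMSE:eq1}, and then uses that $\mcR E_tf$ vanishes for negative times and blows up at worst like $\varphi(y)^{\eta\wedge\alpha_0}$ with $\eta\wedge\alpha_0>-2$ to show this component tends to $0$ as $x_1\downarrow0$. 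This vanishing is the missing analytic input; without it the $t^{\kappa/2}$ gain (and hence also the Lipschitz estimate with the same gain) does not follow.
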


\begin{proof}
In the proof of inequalities, due to the density argument, we can assume that the model $M$ is smooth.

We know $\mcK E_tf\in\mcD_{v_{3r}}^{\gamma+2,\eta\wedge\alpha_0+2}(\bbR;\Gamma)$ from Corollary \ref{corollary:singularMSE}, and $\mcR E_tf\in C^{\eta\wedge\alpha_0,Q}(v_{2r})$ from Corollary \ref{corollary:singularreconstruction}.
Moreover, since $E_tf(x)$ vanishes outside $[0,2]\times\bbT^2$, we also obtain $\mcR E_tf\in C^{\eta\wedge\alpha_0,Q}(\bbR\times\bbT^2)$ by modifying the proof of Theorem \ref{theorem:singularreconstruction}.
Then by Proposition \ref{5:proposition:convolutionops}-\ref{5:proposition:convolutionops3}, we have $S(\mcR E_tf)\in C_\mfs^{\gamma+2}(\bbR\times\bbT^2)$ and thus $L(S(\mcR E_t f)) \in \mcD^{\gamma+2,\gamma+2}(\bbR;\Gamma)$.
Therefore, $\mcP_t^\delta f\in\mcD_{v_{3r}}^{\delta,\eta\wedge\alpha_0+2}(\Gamma)$ by Proposition \ref{def:variantsnorms}-\ref{def:variantsnorms0}. 
The identity \eqref{theorem:localintimeMSE:eq1} follows from Theorem \ref{theorem:KR=RK} and the definition of $L(S(\mcR E_t f))$.

Note that $\tri\mcP_t^\delta f\tri_{\delta,\eta\wedge\alpha_0+2;(0,2t)}\le C_r\tri\mcP_t^\delta f\tri_{\delta,\eta\wedge\alpha_0+2,v_{3r}}$ for some $r$-dependent constant $C_r$.
We show \eqref{theorem:localintimeMSE:eq2} for $\kappa>0$ by applying Lemma \ref{def:variantsnorms2}.
By definition, the only index $\alpha\in\bfA$ of elements in $\bfS$ smaller than $\eta\wedge\alpha_0+2$ $(\le1-\varepsilon)$ is $\alpha=0$. 
Since $M$ is smooth, by Proposition \ref{proposition:reconstsmooth}, the $\bfT_0$-component of $\mcP_t^\delta f(x)$ is equal to
$$
\big(\Pi_x(\mcP_t^\delta f)(x)\big)(x)
=(\mcR\mcP_t^\delta f)(x)
=(\partial_1-a\Delta+c)^{-1}(\mcR E_tf)(x).
$$
Since $(\mcR E_tf)(y)=\big(\Pi_y(E_tf)(y)\big)(y)=0$ vanishes on $y\in(-\infty,0)\times\bbT^2$, we also have 
$$
(\partial_1-a\Delta+c)^{-1}(\mcR E_tf)(x)
=\int_{[0,x_1]\times\bbR^2}P_{x_1-y_1}(x',y')(\mcR E_tf)(y)dy.
$$
Note that, in the proof of Proposition \ref{proposition:reconstsmooth}, we obtained
$$
|\mcR E_tf(y)|\lesssim\varphi (y)^{\eta\wedge\alpha_0}.
$$
Since $\eta\wedge\alpha_0>-2$, we can show that 
$$
|(\partial_1-a\Delta+c)^{-1}(\mcR E_tf)(x)|\lesssim\int_0^{x_1}|y_1|^{(\eta\wedge\alpha_0)/2}dy_1\to0
$$
as $x_1\downarrow0$.
Therefore, by Lemma \ref{def:variantsnorms2} we have
\begin{align*}
\tri \mcP_t^\delta f\tri_{\gamma,\eta\wedge\alpha_0+2-\kappa;(0,2t)}
&\lesssim\tri \mcP_t^\delta f\tri_{\gamma,\eta\wedge\alpha_0+2-\kappa;(0,2t)}^\circ\\
&\lesssim t^{\kappa/2}\tri \mcP_t^\delta f\tri_{\gamma,\eta\wedge\alpha_0+2;(0,2t)}^\circ
\lesssim t^{\kappa/2}\tri \mcP_t^\delta f\tri_{\gamma,\eta\wedge\alpha_0+2;(0,2t)},
\end{align*}
where $\tri \cdot\tri_{\gamma,\eta;I}^\circ:=\lp \cdot\rp_{\gamma,\eta;I}^\circ+\|\cdot\|_{\gamma,\eta;I}$.
The proof of the local Lipschitz estimate is a slight modification.
\end{proof}

\subsection{Solution theory for PAM}

We show the local-in-time well-posedness of the equation
\begin{equation}\label{5:eq:MDPAM}
U=L(Pu_0)+\mcP_t^\gamma \big(b(U)\Xi\big)
\end{equation}
in the class $\mcD^{\gamma,\eta}((0,2t),\bfS;\Gamma)$ with some appropriate choices of $\gamma$ and $\eta$.
The term $L(Pu_0)$ and the operator $\mcP_t^\gamma$ was defined in the previous subsection.
The only undefined object $b(U)$ is the lift of the composition map $u\mapsto b(u)$ defined in \cite[Proposition 6.13]{Hai14}.
In the present case, for sufficiently small $\varepsilon$ and any $U\in \mcD^{\gamma,\eta}((0,2t),\bfS;\Gamma)$ with $\gamma\in(1,2-2\varepsilon)$ and $\eta\in[0,\gamma]$ of the form
$$
U(x)=u(x){\bf1}+v(x)\mcI(\Xi)+u_2(x)X_2+u_3(x)X_3,
$$
we can define $b(U)\in\mcD^{\gamma,\eta}((0,2t),\bfS;\Gamma)$ by the concrete form
$$
b(U)(x)=b(u(x)){\bf1}+b'(u(x))\{v(x)\mcI(\Xi)+u_2(x)X_2+u_3(x)X_3\}.
$$
Then the map $U\mapsto b(U)$ is locally Lipschitz continuous.

\begin{thm}\label{5:theorem:FP}
Assume $\varepsilon\in(0,\alpha\wedge(1/4))$ and let $\theta\in(0,1-\varepsilon)$.
Then there exists a function $t_0:(0,\infty)^2\to(0,1]$ such that, the following assertion holds for any $R_1,R_2>0$:
For any $u_0\in C^\theta(\bbT^2)$ such that $\|u_0\|_{C^\theta(\bbT^2)}\le R_1$, and any $M\in\scM_r^\per(\scT)$ such that $\|M\|_{\gamma,v_r}\le R_2$, the equation \eqref{5:eq:MDPAM} with $t=t_0(R_1,R_2)$ and $\gamma=1+2\varepsilon$ has a unique solution $U$ in the class $\mcD^{1+2\varepsilon,\theta}((0,2t),\bfS;\Gamma)$.
Moreover, the mapping 
$$
S_t:(u_0,M)\mapsto U
$$
is Lipschitz continuous on the space $\{u_0\,;\,\|u_0\|_{C^\theta(\bbT^2)}\le R_1\}\times\{M\,;\, \|M\|_{\gamma,v_r}\le R_2\}$.
\end{thm}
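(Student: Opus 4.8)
The plan is to solve \eqref{5:eq:MDPAM} by a Banach fixed point argument for the map
\[
\mcM_{u_0,M}(U):=L(Pu_0)+\mcP_t^{\gamma}\big(b(U)\Xi\big),\qquad\gamma=1+2\varepsilon,
\]
on a closed ball of the Banach space $\mcD^{\gamma,\theta}\big((0,2t),\bfS;\Gamma\big)$, exploiting the small parameter $t>0$ to make $\mcM_{u_0,M}$ a strict contraction. The first step is to check that $\mcM_{u_0,M}$ is well defined and maps $\mcD^{\gamma,\theta}\big((0,2t),\bfS;\Gamma\big)$ into itself. Since $\theta\in(0,1-\varepsilon)\subset[0,\gamma]$, for $U$ of the indicated form the composition $b(U)$ again lies in $\mcD^{\gamma,\theta}\big((0,2t),\bfS;\Gamma\big)$ by the explicit formula recalled before the statement (here $b\in C_b^3$ and $\varepsilon<1/4$ ensure that enough derivatives of $b$ are available); multiplying by $\Xi$ and invoking \cite[Proposition 6.12]{Hai14} gives $b(U)\Xi\in\mcD^{\gamma_f,\eta_f}\big((0,2t);\Gamma\big)$ with $\gamma_f=\gamma-(1+\varepsilon)=\varepsilon$ and $\eta_f=-1-\varepsilon=\alpha_0$ (using $\theta\ge0$). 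One verifies $\gamma_f\in\big(0,\alpha\wedge(1-2\varepsilon)\big)$ and $\eta_f\in(\gamma_f-2,\gamma_f]$ from $\varepsilon<\alpha\wedge(1/4)$, so Theorem \ref{theorem:localintimeMSE} applies with $\delta=\gamma\in(0,\gamma_f+2]$ and yields $\mcP_t^{\gamma}(b(U)\Xi)\in\mcD_{v_{3r}}^{\gamma,\,\alpha_0+2}(\bbR;\Gamma)$. As $\alpha_0+2=1-\varepsilon>\theta$, restricting to $(0,2t)\times\bbT^2$ (where $v_{3r}$ is bounded above and below), using the embedding $\mcD^{\gamma,1-\varepsilon}\hookrightarrow\mcD^{\gamma,\theta}$ from Proposition \ref{def:variantsnorms}-\ref{def:variantsnorms-1}, and adding $L(Pu_0)\in\mcD^{\gamma,\theta}$ from Lemma \ref{5:lemma:initialcondition}, we land back in $\mcD^{\gamma,\theta}\big((0,2t),\bfS;\Gamma\big)$; that the output is $\bfS$-valued is immediate from the forms of $L$, $\mcK$ and $P_{<\gamma}$.

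For the self-mapping and contraction I would use the small-time gain \eqref{theorem:localintimeMSE:eq2}. Fix $\kappa\in(0,1-\varepsilon-\theta]$, so that $\alpha_0+2-\kappa\ge\theta$; combining \eqref{theorem:localintimeMSE:eq2}, the boundedness of $f\mapsto f\Xi$ with norm polynomial in $\tri M\tri_{\gamma,v_r}$, and the local Lipschitz continuity of $U\mapsto b(U)$ recalled before the statement, one gets, on the ball $B_\rho:=\{U\,;\,\tri U\tri_{\gamma,\theta;(0,2t)}\le\rho\}$,
\[
\tri\mcM_{u_0,M}(U)\tri_{\gamma,\theta;(0,2t)}\le C\|u_0\|_{C^\theta(\bbT^2)}+t^{\kappa/2}\Phi(R_2,\rho),\qquad
\tri\mcM_{u_0,M}(U);\mcM_{u_0,M}(V)\tri_{\gamma,\theta;(0,2t)}\le t^{\kappa/2}\Psi(R_2,\rho)\,\tri U;V\tri_{\gamma,\theta;(0,2t)}
\]
for $U,V\in B_\rho$, with $\Phi,\Psi$ depending only on the indicated quantities and on $\|b\|_{C_b^3}$. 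Taking $\rho:=2CR_1$ and then $t_0(R_1,R_2)>0$ so small that $t_0^{\kappa/2}\Phi(R_2,2CR_1)\le CR_1$ and $t_0^{\kappa/2}\Psi(R_2,2CR_1)\le\tfrac12$ makes $\mcM_{u_0,M}$ a $\tfrac12$-contraction of $B_\rho$ into itself whenever $\|u_0\|_{C^\theta(\bbT^2)}\le R_1$, $\tri M\tri_{\gamma,v_r}\le R_2$ and $t=t_0(R_1,R_2)$; the Banach fixed point theorem then provides the unique solution $U=S_t(u_0,M)\in\mcD^{1+2\varepsilon,\theta}\big((0,2t),\bfS;\Gamma\big)$.

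The Lipschitz continuity of $S_t$ follows from the uniform-contraction-with-parameters principle: writing $U=S_t(u_0,M)$, $\bar U=S_t(\bar u_0,\bar M)$ and $U-\bar U=\big(\mcM_{u_0,M}(U)-\mcM_{u_0,M}(\bar U)\big)+\big(\mcM_{u_0,M}(\bar U)-\mcM_{\bar u_0,\bar M}(\bar U)\big)$, the first difference is dominated by $\tfrac12\tri U;\bar U\tri_{\gamma,\theta;(0,2t)}$ as above, while the second is controlled by the change of initial datum (Lemma \ref{5:lemma:initialcondition}) together with the joint Lipschitz estimates across models and modelled distributions in the last part of Theorem \ref{theorem:localintimeMSE} and the local Lipschitz bound for $b$, yielding $\tri U;\bar U\tri_{\gamma,\theta;(0,2t)}\lesssim\|u_0-\bar u_0\|_{C^\theta(\bbT^2)}+\tri M;\bar M\tri_{\gamma,v_r}$ after absorbing the $\tfrac12$-term; here ``$;$'' denotes the comparison seminorms of Definition \ref{def:besovsMD}. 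The step I expect to be the main obstacle is not any single inequality but the index bookkeeping: one must check that $\mcP_t^{\gamma}(b(U)\Xi)$, produced by extending via $E_t$, applying $\mcK$ and $L\circ S\circ\mcR$, and truncating by $P_{<\gamma}$, returns to $\mcD^{\gamma,\theta}$ with the \emph{same} $\gamma$ and $\theta$ — which is precisely what pins down $\gamma=1+2\varepsilon$, the constraint $\theta<1-\varepsilon$, and the requirement $\varepsilon<\alpha\wedge(1/4)$ (so that $\gamma_f=\varepsilon$ stays within the admissible range of Theorem \ref{theorem:localintimeMSE}) — and that the gain $t^{\kappa/2}$ is genuinely uniform over $B_\rho$ and over all $(u_0,M)$ with norms below $R_1,R_2$, so that $t_0$ may be chosen depending only on $(R_1,R_2)$.
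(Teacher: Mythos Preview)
Your proof is correct and follows essentially the same route as the paper: a Banach fixed point argument using the chain $U\mapsto b(U)\mapsto b(U)\Xi\mapsto\mcP_t^{1+2\varepsilon}(b(U)\Xi)$ together with the small-time factor $t^{\kappa/2}$ from Theorem \ref{theorem:localintimeMSE}. The only cosmetic difference is that the paper tracks the intermediate $\eta$-parameter as $\theta-1-\varepsilon$ (the direct output of \cite[Proposition 6.12]{Hai14}) rather than $-1-\varepsilon$, but since $\eta_f\wedge\alpha_0=\alpha_0$ either way, both lead to the same target space $\mcD^{1+2\varepsilon,1-\varepsilon}$ and the same gain $\kappa=1-\varepsilon-\theta$.
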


\begin{proof}
The proof is a standard fixed point argument.
Note that, the following operators are well-defined and locally Lipschitz continuous.
\begin{itemize}
\item
(\cite[Proposition 6.13]{Hai14})
$U\in\mcD^{1+2\varepsilon,\theta}((0,2t),\bfS;\Gamma)\mapsto
b(U)\in\mcD^{1+2\varepsilon,\theta}((0,2t),\bfS;\Gamma)$.
\item
(\cite[Proposition 6.12]{Hai14})
$V\in\mcD^{1+2\varepsilon,\theta}((0,2t),\bfS;\Gamma)\mapsto
V\Xi\in\mcD^{\varepsilon,\theta-1-\varepsilon}((0,2t);\Gamma)$.
\item
(Theorem \ref{theorem:localintimeMSE})
$W\in\mcD^{\varepsilon,\theta-1-\varepsilon}((0,2t);\Gamma)\mapsto
\mcP_t^{1+2\varepsilon}W\in\mcD^{1+2\varepsilon,1-\varepsilon}\in((0,2t),\bfS;\Gamma)$.
\end{itemize}
Therefore, by setting $F(U)=L(Pu_0)+\mcP_t^{1+2\varepsilon} \big(b(U)\Xi\big)$, we have
\begin{align*}
\tri F(U)\tri_{1+2\varepsilon,\theta;(0,2t)}
&\lesssim\|u_0\|_{C^\theta}+t^{(1-\varepsilon-\theta)/2}\tri b(U)\Xi\tri_{\varepsilon,\theta-1-\varepsilon}\\
&\lesssim\|u_0\|_{C^\theta}+t^{(1-\varepsilon-\theta)/2}\tri b(U)\tri_{1+2\varepsilon,\theta}\\
&\lesssim\|u_0\|_{C^\theta}+t^{(1-\varepsilon-\theta)/2}p(\tri U\tri_{1+2\varepsilon,\theta})
\end{align*}
for some polynomial $p(\cdot)$.
From this inequality, we can find a large $R>0$ depending on $u_0$ and $M$ and show that $F$ maps a ball of radius $R$ in $\mcD^{1+2\varepsilon,\theta}((0,2t),\bfS;\Gamma)$ into itself.
From here onward, we can show the assertion by an argument similar to \cite[Theorem 7.8]{Hai14}.
\end{proof}

\subsection{Convergence of models}

In this subsection, we define the sequence of smooth admissible models associated with regularized noises and show its probabilistic convergence.
We fix an even function $\varrho:\bbR^2\to[0,1]$ in the Schwartz class and such that $\int_{\bbR^2}\varrho(x)dx=1$, and set $\varrho_n(x)=2^{2n}\varrho(2^{n}x)$ for each $n\in\bbN$.
We define the smooth approximation of the spatial white noise $\xi$ by
$$
\xi_n(x)=\int_{\bbT^2}\widetilde{\rho_n}(x-y)\xi(y)dy,\qquad(x\in\bbT^2)
$$
where $\widetilde{\rho_n}$ denotes the spatial periodization of $\rho_n$ defined by $\widetilde{\rho_n}(x):=\sum_{k\in\bbZ^2}\rho_n(x+k)$.
For such $\xi_n$, we can define the unique smooth admissible model $M^n=(\Pi^n,\Gamma^n)\in\scM_r^\per(\scT)$ by the properties
\begin{align*}
&(\Pi_x^n\Xi)(y)=\xi_n(y'),\qquad(\Pi_x^nX_i\Xi)(y)=(y_i-x_i)\xi_n(y'),\\
&\big(\Pi_x^n\mcI(\Xi)\Xi\big)(y)=\big(K\xi_n(y)-K\xi_n(x)\big)\xi_n(y')-C_n(y),
\end{align*}
where the function $C_n$ is defined by
$$
C_n(x)=\bbE\big[(K\xi_n)(x)\xi_n(x')\big]
=\int_{\bbR^3}K(x,y)c_n(x'-y')dy
$$
with $c_n(x'-y'):=\bbE[\xi_n(x')\xi_n(y')]=\widetilde{\varrho_n^{*2}}(x'-y')$.

\begin{thm}\label{5:theorem:modellimit}
For any $r>0$ and $p\in[1,\infty)$, the sequence $\{M^n\}_{n\in\bbN}$ of models defined above converges in $L^p(\Omega,\scM_r^\per(\scT))$.
\end{thm}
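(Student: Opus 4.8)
The plan is the following. Since $\scM_r^\per(\scT)$ is a complete metric space (being defined as a completion), so is $L^p(\Omega,\scM_r^\per(\scT))$, and each $M^n=(\Pi^n,\Gamma^n)$ defines a (measurable) random element of $\scM_r^\per(\scT)$; hence it suffices to show that $\{M^n\}_{n\in\bbN}$ is Cauchy in $L^p(\Omega,\scM_r^\per(\scT))$, i.e.\ that
$$
\bbE\big[\tri M^n;M^m\tri_{\gamma,v_r}^p\big]\longrightarrow0\qquad(n,m\to\infty)
$$
for every $\gamma\in\bbR$, and I treat the $\|\Pi^n-\Pi^m\|_{\gamma,v_r}$ and $\|\Gamma^n-\Gamma^m\|_{\gamma,v_r}$ contributions separately. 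The structural input is that, for each noise-built basis vector $\tau$ of $\bfT$, the random variables $(\Pi_x^n\tau)(y)$, $\partial^{\bf k}K(x,\xi_n)$, and their differences in $n$, all lie in a fixed finite sum of homogeneous Wiener chaoses of $\xi$: chaos of order $1$ for $\Xi$, $X_i\Xi$, $\mcI(\Xi)$ and $\mcI(X_i\Xi)$, and chaoses of orders $0$ and $2$ for $\mcI(\Xi)\Xi$ and $\mcI(\mcI(\Xi)\Xi)$ after the subtraction of $C_n$. By Nelson's hypercontractivity, every $L^p(\Omega)$ norm of such a quantity is controlled by its $L^2(\Omega)$ norm, so it is enough to produce second-moment bounds with a rate vanishing as $n,m\to\infty$.

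For the $\Pi$-part, fix $\alpha\in\bfA$ with $\alpha<\gamma$ and a noise-built $\tau\in\bfT_\alpha$, and expand $Q_t\big(x,(\Pi_x^n-\Pi_x^m)\tau\big)$ into Wiener chaos. Each variance appearing is an integral of a product of the kernels $Q_t$, $K$, $\partial^{\bf k}K$ against the regularized covariance $c_n(x'-y')=\widetilde{\varrho_n^{*2}}(x'-y')$, or against $c_n-c_m$ when comparing levels. Using $\int_{\bbT^2}c_n=1$, the concentration of $c_n$ on the diagonal, the $G$-type estimates of Proposition \ref{5:proposition:QtGtype}, and the regularizing estimates of Proposition \ref{5:proposition:convolutionops}-\ref{5:proposition:convolutionops3}, one obtains for each fixed $(t,x)$ a bound
$$
\bbE\Big[\big|Q_t\big(x,(\Pi_x^n-\Pi_x^m)\tau\big)\big|^2\Big]\lesssim a_{n,m}\,t^{2\alpha/\ell}\,v_r(x)^{-2},
$$
with a constant $a_{n,m}\to0$; the symbols of the form $\mcI(\cdot)$ are treated identically via the admissibility formula defining $\Pi_x^n\mcI(\cdot)$. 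The renormalization function $C_n$ enters only for $\tau=\mcI(\Xi)\Xi$ (and consequently $\mcI(\mcI(\Xi)\Xi)$): it is exactly the divergent zeroth-chaos diagonal contraction in the variance expansion, so its subtraction leaves a convergent zeroth-chaos remainder together with a convergent second-chaos contribution; for $X_i\Xi$ no renormalization is needed.

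For the $\Gamma$-part one argues in the same spirit starting from the explicit formulas for $\Gamma_{yx}^n$ in Definition \ref{def:smoothadmissible}: the increments $\Gamma_{yx}^n\tau-\Gamma_{yx}^m\tau$ are again in a fixed finite sum of chaoses, built from the evaluations $\big((\Pi_x^n-\Pi_x^m)\sigma\big)(y)$ and from kernel differences of the form $\partial^{\bf k}K(y,\cdot)-\partial^{\bf k}K(x,\cdot)$, so that their second moments carry the required factor $v_r^\ast(y-x)\|y-x\|_\mfs^{\alpha-\beta}$ by the pointwise bound on $\Pi^n$ in Definition \ref{def:smoothadmissible} and the H\"older estimate of Definition \ref{asmp2}-\ref{asmp2:holder}. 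It then remains to promote these pointwise-in-$(t,x)$ and pointwise-in-$(x,y)$ second-moment bounds to bounds on the suprema defining $\|\cdot\|_{\gamma,v_r}$: for this I would prove H\"older continuity in $(t,x)$ of $t^{-\alpha/\ell}Q_t(x,\Pi_x^n\tau)$ (and of the analogous $\Gamma$-expressions) using the semigroup property, the time-derivative estimate of Definition \ref{asmp1}-\ref{asmp2:derivative}, and the kernel bounds, upgrade it to arbitrarily large $p$ by hypercontractivity, and apply a Kolmogorov-type continuity criterion, obtaining the remaining values of $p$ by Jensen's inequality. Since the equivalence of $C^{\alpha,Q}(v_r)$ with a classical Besov space is unavailable here (point (i) of the introduction), this passage must be carried out directly with the kernels $Q_t$. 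Finally, the periodization $\widetilde{\varrho_n}$ together with the $\bbZ^2$-periodicity of $a$ renders all the random fields covariant under integer spatial translations, so the supremum over $x\in\bbR^3$ reduces to a supremum over $x_1\in\bbR$, which is absorbed by the weight $v_r(x)=e^{-r|x_1|}$ (and by $v_r^\ast$ on the region $\|y-x\|_\mfs\le\varphi (x,y)$, where $|x_1|$ and $|y_1|$ are comparable).

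The main obstacle is the second-chaos analysis of $\mcI(\Xi)\Xi$ and $\mcI(\mcI(\Xi)\Xi)$ in the presence of the spacetime-dependent renormalization $C_n$: one must verify the exact cancellation of the divergent contraction and the convergence of the remaining kernel integrals while working only with the restricted regularity of $K$ granted by Definition \ref{asmp2} and with the non-translation-invariance of $Q_t$ and $K$, which depend on the merely H\"older coefficient $a(x')$. The kernel estimates needed for this are precisely those established in \cite[Appendix A]{BHK22} (see also \cite{BB21}), and the proof is finished by reducing the second-moment computations above to those bounds; the convergence $c_n\to c$ and the hypercontractivity, combined with the Kolmogorov continuity argument, then yield the claim.
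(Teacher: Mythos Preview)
Your outline is plausible in spirit but takes a quite different route from the paper, and the hardest step is left as a promise rather than an argument.

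The paper does \emph{not} carry out a direct Wiener-chaos / Kolmogorov argument for each basis symbol. Instead it invokes the inductive machinery of \cite{BH23} (random models on regularity-integrability structures), which in the semigroup setting reduces the full convergence in $\scM_r^{\per}(\scT)$ to a single deterministic criterion: the uniform bound
\[
\big|\bbE\big[Q_t(x,\Pi_x^n\tau)\big]\big|\lesssim t^{\beta/4}
\qquad(\tau\in\bfT_\beta,\ \beta<0),
\]
i.e.\ a bound on the \emph{zeroth} chaos component only. Since $\Xi$ and $X_i\Xi$ have zero mean, the only nontrivial case is $\tau=\mcI(\Xi)\Xi$, and the paper then computes this expectation explicitly: writing $K=\int_0^1K_s\,ds$ and using the Gaussian bounds on $Q_t$ and $K_s$, one gets $|\bbE[Q_t(x,\Pi_x^n\mcI(\Xi)\Xi)]|\lesssim\int_0^1 s^{-1/2}(t+s)^{-1/2}\,ds\lesssim-\log t\lesssim t^{-\varepsilon/2}$, uniformly in $n$ and $x$. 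All the variance bounds, the $\Gamma$-part, and the passage from pointwise to uniform estimates are absorbed into the black box of \cite{BH23}; the paper only remarks that the two extra hypotheses there (translation-invariant $Q_t$, $x$-independent preparation map) are inessential, the first because the above expectation bound is the only place it was used, the second by the adaptation in \cite{BHK22}.

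Compared with this, your proposal is essentially a sketch of what \cite{BH23} proves, specialized to the present $\scT$. The step you flag as ``the main obstacle'' --- the second-chaos analysis of $\mcI(\Xi)\Xi$ and $\mcI(\mcI(\Xi)\Xi)$ with spacetime-dependent $C_n$ and non-translation-invariant $Q_t$, $K$ --- is precisely the content that the paper avoids by citing \cite{BH23}, and the Kolmogorov-in-$(t,x)$ argument you would run ``directly with the kernels $Q_t$'' is again part of that machinery. So your route could be made to work, but it amounts to reproving a substantial portion of \cite{BH23} by hand; the paper's approach buys a one-page proof whose only computation is the $\log$-divergent integral above.
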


\begin{proof}
In view of the inductive proof as in \cite{BH23}, it is sufficient to show the uniform bounds
\begin{equation}\label{5:theorem:proof:modellimit}
\big|\bbE\big[Q_t(x,\Pi_x^n\tau)\big]\big|\lesssim t^{\beta/4}
\end{equation}
for any $\beta\in\{-1-\varepsilon,-2\varepsilon,-\varepsilon\}$ and $\tau\in\bfT_\beta$.
Note that the assumptions in \cite{BH23} are more restrictive: the kernel $Q_t(x,y)$ is homogeneous in the sense that it depends only on $x-y$, and the renormalization model is defined from an $x$-independent preparation map. However, the first restriction is used only to prove the above estimate in \cite{BH23}, so if we can establish this estimate in some alternative way, we can still follow the discussion in \cite{BH23}. Moreover, the second restriction is also not problematic, as the algebraic relations derived from preparation maps can be easily adapted to include $x$-dependent preparation maps. Such a modification is carried out in \cite{BHK22}.

Since $\xi$ is a centered Gaussian, we have only to show \eqref{5:theorem:proof:modellimit} for $\tau=\mcI(\Xi)\Xi$. By definition,
\begin{align*}
\bbE\big[Q_t(x,\Pi_x^n\tau)\big]
&=-\int_{\bbR^3}Q_t(x,y)\bbE[(K\xi_n)(x)\xi_n(y')]dy\\
&=-\int_{(\bbR^3)^2}Q_t(x,y)K(x,z)c_n(z'-y')dydz.
\end{align*}
To estimate this integral, we decompose $K=\int_0^1K_sds$ and set
$$
I_{t,s}^n(x)=-\int_{(\bbR^3)^2}Q_t(x,y)K_s(x,z)c_n(z'-y')dydz.
$$
By the Gaussian estimates of $Q_t$ and $K_s$, their time integral is estimated as
\begin{align*}
\int_{\bbR}|Q_t(x,y)|dy_1\lesssim h_t^{(C)}(x'-y'),\qquad
\int_{\bbR}|K_s(x,z)|dz_1\lesssim s^{-1/2}h_s^{(C)}(x'-z'),
\end{align*}
for some constant $C>0$, where $h_t^{(C)}(x'):=t^{-1/2}e^{-C\{(|x_2|^4/t)^{1/3}+(|x_3|^4/t)^{1/3}\}}$.
Thus we have
\begin{align*}
|I_{t,s}^n(x)|
\lesssim s^{-1/2}(h_t^{(C)}*h_s^{(C)}*|c_n|)(0).
\end{align*}
Since $|h_t^{(C)}*h_s^{(C)}(x)|\lesssim h_{t+s}^{(c)}(x)$ for some constant $c\in(0,C)$ (see \cite[Lemma 55]{BHK22} for instance), we have
$$
|I_{t,s}^n(x)|\lesssim s^{-1/2}(t+s)^{-1/2}.
$$
Since we have
\begin{align*}
\int_0^1|I_{t,s}^n(x)|ds
\lesssim\int_0^t s^{-1/2}t^{-1/2}ds+\int_t^1s^{-1}ds
\lesssim -\log t\lesssim t^{-\varepsilon/2}
\end{align*}
for any $\varepsilon>0$, we obtain the estimate \eqref{5:theorem:proof:modellimit} for $\tau=\mcI(\Xi)\Xi$.
\end{proof}

\subsection{Renormalization of PAM}\label{5:sec:renorPAM}

For a fixed initial condition $u_0\in C^\theta(\bbT^2)$ and the sequence of random models $\{M^n\}$ constructed in the previous subsection, we denote by
$$
U_n=S_t(u_0,M^n)
$$
the solution of the equation \eqref{5:eq:MDPAM} with $\gamma=1+2\varepsilon$ and with the random time
$$
t=t_0\bigg(\|u_0\|_{C^\theta(\bbT^2)},\ \sup_{n\in\bbN}\|M^n\|_{\gamma,v_r}\bigg).
$$
Combining Theorem \ref{5:theorem:modellimit} with Theorem \ref{5:theorem:FP}, we have the following theorem.

\begin{thm}\label{5:theorem:renorPAM}
For each $n\in\bbN$, we denote by $\mcR^n$ the reconstruction operator associated with $M^n$.
Then the function $u_n=\mcR^n(E_tU_n)$ converges in $L^\infty((0,t)\times\bbT^2)$ in probability as $n\to\infty$ and coincides with the unique solution of the equation
\begin{equation}\label{5:theorem:renorPAMeq}
\big(\partial_1-a(x')\Delta +c\big)u_n(x)=b\big(u_n(x)\big)\xi_n(x')-C_n(x)(bb')\big(u_n(x)\big)
\end{equation}
with the initial value $u_0\in C^\theta(\bbT^2)$ on $x\in(0,t)\times\bbT^2$.
\end{thm}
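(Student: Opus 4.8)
The result follows by combining the Lipschitz stability of the solution map $S_t$ (Theorem~\ref{5:theorem:FP}), the convergence of the random models (Theorem~\ref{5:theorem:modellimit}), and the explicit description of reconstruction and of the convolution operator for smooth admissible models. First I would dispose of the random existence time. Set $R:=\sup_{n\in\bbN}\|M^n\|_{\gamma,v_r}$, which is finite almost surely by Theorem~\ref{5:theorem:modellimit}, and write $M:=\lim_n M^n\in\scM_r^\per(\scT)$. Fixing $R_2>0$ and working on the event $\{R\le R_2\}$, Theorem~\ref{5:theorem:FP} produces $U_n:=S_t(u_0,M^n)$ and $U:=S_t(u_0,M)$ in $\mcD^{1+2\varepsilon,\theta}((0,2t),\bfS;\Gamma)$, and the Lipschitz estimate there gives $\tri U_n;U\tri_{1+2\varepsilon,\theta;(0,2t)}\le C_{R_2}\tri M^n;M\tri_{\gamma,v_r}\to0$ in probability. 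Letting $R_2\uparrow\infty$ at the end upgrades the in-probability statements from the events $\{R\le R_2\}$ to the whole probability space; so from now on I fix $R_2$ and argue on $\{R\le R_2\}$.

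The next step is to observe that, on $(0,t)\times\bbT^2$, the function $u_n=\mcR^n(E_tU_n)$ is just the ${\bf1}$-coefficient of $U_n$. Since $M^n$ is a smooth admissible model, Proposition~\ref{proposition:reconstsmooth} applies: its diagonal hypothesis is contained in Definition~\ref{def:smoothadmissible}\,(iv) for the components of nonpositive homogeneity, and is automatic for those of positive homogeneity, because $(\Pi^n_x\tau)(x)=\lim_{s\downarrow0}Q_s(x,\Pi^n_x\tau)=0$ for $\tau\in\bfT_\alpha$ with $\alpha>0$. Hence $\mcR^n(E_tU_n)(x)=\big(\Pi^n_x(E_tU_n)(x)\big)(x)$, and since $E_tU_n$ takes values in $\bfS$, all of whose homogeneous components other than ${\bf1}$ have strictly positive regularity, and $(\Pi^n_x{\bf1})(x)=1$, only the ${\bf1}$-coefficient survives; that is $\mcR^n(E_tU_n)(x)=[E_tU_n(x)]_{\bf1}=[U_n(x)]_{\bf1}$ on $(0,t]\times\bbT^2$ by Lemma~\ref{proposition:extensionMD}. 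Writing $u:=[U]_{\bf1}$ and using $\theta>0$ (so $\varphi(x)^{(\theta-0)\wedge0}\equiv1$), this gives $\|u_n-u\|_{L^\infty((0,t)\times\bbT^2)}\le\lp U_n-U\rp_{1+2\varepsilon,\theta;(0,2t)}\le\tri U_n;U\tri_{1+2\varepsilon,\theta;(0,2t)}\to0$ in probability, which is the asserted convergence.

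For the identification of the equation I would take coefficients in the fixed point identity $U_n=L(Pu_0)+\mcP_t^{1+2\varepsilon}(b(U_n)\Xi)$ on $(0,t)\times\bbT^2$. Applying $\mcR^n$ and using linearity, $\mcR^n(L(Pu_0))=Pu_0$ and (by the identity \eqref{theorem:localintimeMSE:eq1} of Theorem~\ref{theorem:localintimeMSE}, valid since $M^n$ is smooth admissible) $\mcR^n\big(\mcP_t^{1+2\varepsilon}(b(U_n)\Xi)\big)=(\partial_1-a\Delta+c)^{-1}\big(\mcR^nE_t(b(U_n)\Xi)\big)$, so $u_n=Pu_0+(\partial_1-a\Delta+c)^{-1}\big(\mcR^nE_t(b(U_n)\Xi)\big)$ there. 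To evaluate $\mcR^nE_t(b(U_n)\Xi)(y)=\big(\Pi^n_yE_t(b(U_n)\Xi)(y)\big)(y)$ for $y\in(0,t)\times\bbT^2$, note that $\chi_t\equiv1$ there, so $E_t(b(U_n)\Xi)(y)=b(u_n(y))\Xi+b'(u_n(y))v(y)\,\mcI(\Xi)\Xi+b'(u_n(y))\sum_j[U_n(y)]_{X_j}X_j\Xi$ where $v:=[U_n]_{\mcI(\Xi)}$; moreover the $\mcI(\Xi)$-coefficient of the same fixed point identity shows $v(y)=b(u_n(y))$ on $(0,t]\times\bbT^2$ (only the abstract integration $\mcI$ contributes there, sending the $\Xi$-coefficient $b(u_n)$ into the $\mcI(\Xi)$-slot, while $L(Pu_0)$, $\mcJ$ and $\mcN$ carry no $\mcI(\Xi)$-component). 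Applying $\Pi^n_y$ and evaluating at $y$ with the admissible-model identities $(\Pi^n\Xi)(y)=\xi_n(y')$, $(\Pi^n_yX_j\Xi)(y)=0$, $(\Pi^n_y\mcI(\Xi)\Xi)(y)=-C_n(y)$ gives $\mcR^nE_t(b(U_n)\Xi)(y)=b(u_n(y))\xi_n(y')-(bb')(u_n(y))C_n(y)$. Substituting this back shows that $u_n$ is the mild solution on $(0,t)\times\bbT^2$ of \eqref{5:theorem:renorPAMeq}; the initial value is recovered from $Pu_0(x)\to u_0(x')$ and the vanishing of the convolution term as $x_1\downarrow0$ (as in the proof of Theorem~\ref{theorem:localintimeMSE}), and the passage to the differential form \eqref{5:theorem:renorPAMeq} together with uniqueness follow from Proposition~\ref{5:proposition:convolutionops}\,(i) applied to the continuous right-hand side $b(u_n)\xi_n-(bb')(u_n)C_n$ (after a short H\"older bootstrap) and a Gronwall argument.

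I expect the bulk of the work to be the algebraic bookkeeping of the last paragraph: propagating the ${\bf1}$- and $\mcI(\Xi)$-coefficients through $L$, $E_t$, $\mcK$ and $\mcP_t^{1+2\varepsilon}$, and recognizing the renormalization function precisely as the unique nonvanishing diagonal value $(\Pi^n_y\mcI(\Xi)\Xi)(y)=-C_n(y)$. The two analytic points — the treatment of the random existence time (routine conditioning on $\{R\le R_2\}$ and $R_2\uparrow\infty$) and the fact that reconstruction of $\bfS$-valued modelled distributions for smooth models reduces to reading off the ${\bf1}$-coefficient (because $\bfS$ contains only nonnegative homogeneities) — are comparatively soft once the structure is unwound.
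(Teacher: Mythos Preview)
Your proposal is correct and follows essentially the same route as the paper's proof: convergence of $u_n$ from the Lipschitz continuity of $S_t$ combined with Theorem~\ref{5:theorem:modellimit}, the identification $u_n=[U_n]_{\bf1}$ via Proposition~\ref{proposition:reconstsmooth} (using that $\bfS$ has only nonnegative homogeneities), and the derivation of \eqref{5:theorem:renorPAMeq} by reading off the ${\bf1}$- and $\mcI(\Xi)$-coefficients of the fixed point and evaluating $(\Pi^n_y\tau)(y)$ on the diagonal. Your treatment is somewhat more explicit than the paper's in two places---the conditioning on $\{R\le R_2\}$ to handle the random existence time, and the remark on uniqueness via Proposition~\ref{5:proposition:convolutionops}\,(i) plus Gronwall---both of which the paper leaves implicit.
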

As noted in Remark \ref{rem3}, the constant $c$ in the equation \eqref{5:theorem:renorPAMeq} can be arbitrary.
\begin{proof}
On the region $x\in(0,t)\times\bbT^2$, since $u_n(x)=\big(\Pi_x^nU_n(x)\big)(x)$, we can assume that $U_n$ is of the form
\begin{equation}\label{5:theorem:renorPAMeqproof}
U_n(x)=u_n(x){\bf1}+v_n(x)\mcI(\Xi)+u_{2,n}(x)X_2+u_{3,n}(x)X_3.
\end{equation}
The convergence of $\{u_n\}$ in $L^\infty((0,t)\times\bbT^2)$ follows from the convergence of $\{U_n\}$ and the definition of the norm $\lp\cdot\rp_{\gamma,\eta;(0,t)}$.

Finally, we show that $u_n$ satisfies the equation \eqref{5:theorem:renorPAMeq} on the region $(0,t)\times\bbT^2$.
For any $x\in(0,t)\times\bbT^2$, the function $b(U_n)(x)$ is of the form
$$
b(U_n)(x)=b(u_n(x)){\bf1}+b'(u_n(x))\{v_n(x)\mcI(\Xi)+u_{2,n}(x)X_2+u_{3,n}(x)X_3\},
$$
and then $\mcP_t^{1+2\varepsilon}\big(b(U)\Xi\big)$ is of the form
$$
\mcP_t^{1+2\varepsilon}\big(b(U)\Xi\big)(x)
=w_n(x){\bf1}+b(u_n(x))\mcI(\Xi)+w_{2,n}(x)X_2+w_{3,n}(x)X_3
$$
for some functions $w_n,w_{2,n}$, and $w_{3,n}$.
For $U_n$ to solve the equation \eqref{5:eq:MDPAM}, the coefficient $v_n(x)$ in \eqref{5:theorem:renorPAMeqproof} must be equal to $b(u_n(x))$ for any $x\in(0,t)\times\bbT^2$.
By Theorem \ref{theorem:localintimeMSE}, the function $u_n$ satisfies
\begin{align*}
u_n(x)=Pu_0(x)+\int_{[0,x_1]\times\bbR^2}P_{x_1-y_1}(x',y')(\mcR^n E_tf(U_n)\Xi)(y)dy.
\end{align*}
Since $y\in(0,t)\times\bbT^2$, from the definition of $\Pi_x^n\mcI(\Xi)\Xi$, we obtain
\begin{align*}
(\mcR E_tb(U_n)\Xi)(y)
&=\big(\Pi_yE_tb(U_n)(y)\Xi\big)(y)
=\big(\Pi_yb(U_n)(y)\Xi\big)(y)\\
&=b(u_n(y))\xi_n(y')-C_n(y)(bb')(u_n(y)).
\end{align*}
This implies that $u_n$ satisfies the equation \eqref{5:theorem:renorPAMeq} (in mild sense) on $(0,t)\times\bbT^2$.
\end{proof}

We also have a stronger convergence result.

\begin{cor}
In the setting of Theorem \ref{5:theorem:renorPAM}, the convergence of $\{u_n\}$ also holds in the space $C_\mfs^\theta((0,t)\times\bbT^2)$.
\end{cor}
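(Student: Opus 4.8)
The plan is to promote the $L^{\infty}$-convergence obtained in Theorem~\ref{5:theorem:renorPAM} to convergence in $C_{\mfs}^{\theta}$, by exploiting the explicit representation of $u_n$ that appears in that proof. Keep the notation there: $\gamma=1+2\varepsilon$, $t=t_0(\|u_0\|_{C^{\theta}(\bbT^2)},\sup_n\|M^n\|_{\gamma,v_r})$ (a.s.\ finite as in Theorem~\ref{5:theorem:renorPAM}), $U_n=S_t(u_0,M^n)$, and set $g_n:=\mcR^{n}\big(E_t(b(U_n)\Xi)\big)$. Since $b(U_n)\Xi\in\mcD^{\varepsilon,\theta-1-\varepsilon}((0,2t);\Gamma^{n})$ and $\eta\wedge\alpha_0=(\theta-1-\varepsilon)\wedge(-1-\varepsilon)=-1-\varepsilon$, Corollary~\ref{corollary:singularreconstruction} together with the compact-time-support argument from the proof of Theorem~\ref{theorem:localintimeMSE} gives $g_n\in C^{-1-\varepsilon,Q}(\bbR\times\bbT^{2})$, and, as recorded in the proof of Theorem~\ref{5:theorem:renorPAM}, on $(0,t)\times\bbT^{2}$ one has $u_n=Pu_0+(\partial_1-a\Delta+c)^{-1}g_n$. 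Since $Pu_0$ is independent of $n$, this yields $u_n-u_m=(\partial_1-a\Delta+c)^{-1}(g_n-g_m)$ on $(0,t)\times\bbT^{2}$; as restriction to $(0,t)\times\bbT^{2}$ does not increase the $C_{\mfs}^{\theta}$-norm, it suffices to prove: (I) $\{g_n\}$ is Cauchy in $C^{-1-\varepsilon,Q}(\bbR\times\bbT^{2})$ in probability; and (II) $(\partial_1-a\Delta+c)^{-1}$ is bounded from $C^{-1-\varepsilon,Q}(\bbR\times\bbT^{2})$ to $C_{\mfs}^{\theta}(\bbR\times\bbT^{2})$. Combined with the standard bound $\|Pu_0\|_{C_{\mfs}^{\theta}((0,t)\times\bbT^{2})}\lesssim\|u_0\|_{C^{\theta}(\bbT^{2})}$ and the $L^{\infty}$-convergence already known, the limit is then identified with $u$.

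For (I), I would chain the available Lipschitz estimates. By Theorem~\ref{5:theorem:modellimit} the sequence $\{M^n\}$ is Cauchy in $\scM_r^{\per}(\scT)$ --- in $L^p(\Omega)$, hence in probability --- so $\tri M^{n};M^{m}\tri_{\gamma,v_r}\to0$ and $\|\Pi^{n}-\Pi^{m}\|_{\gamma,v_r}\to0$. The Lipschitz continuity of the solution map $S_t$ (Theorem~\ref{5:theorem:FP}) then gives that $\{U_n\}$ is Cauchy in $\mcD^{1+2\varepsilon,\theta}((0,2t);\Gamma^{n})$. Passing through the locally Lipschitz maps $U\mapsto b(U)$ and $V\mapsto V\Xi$ of \cite{Hai14} and through the extension operator $E_t$ of Lemma~\ref{proposition:extensionMD} (whose Lipschitz dependence is inherited from that of the multiplication of modelled distributions), one gets that $\{E_t(b(U_n)\Xi)\}$ is Cauchy in the $\tri\,\cdot\,;\,\cdot\,\tri_{\varepsilon,\theta-1-\varepsilon;\bbR}$ sense. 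Finally, the Lipschitz part of the singular reconstruction theorem (Corollary~\ref{corollary:singularreconstruction}, with the same compact-time-support modification) bounds $\|g_n-g_m\|_{C^{-1-\varepsilon,Q}(\bbR\times\bbT^{2})}$ by a constant (uniform over the relevant bounded sets) times $\|\Pi^{n}-\Pi^{m}\|_{\gamma,v_r}+\tri E_t(b(U_n)\Xi);E_t(b(U_m)\Xi)\tri_{\varepsilon,\theta-1-\varepsilon;\bbR}$, which tends to $0$; this gives (I).

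For (II), use $(\partial_1-a\Delta+c)^{-1}=K+S$ from Proposition~\ref{5:proposition:convolutionops}-\ref{5:proposition:convolutionops3}. The $S$-part is immediate: by that proposition, $S$ maps $C^{-1-\varepsilon,Q}(\bbR\times\bbT^{2})$ continuously into $C_{\mfs}^{\alpha\wedge(1-\varepsilon)+2-\varepsilon'}(\bbR\times\bbT^{2})$, and since $\alpha\wedge(1-\varepsilon)>0$ one may take $\varepsilon'$ small enough that this target embeds into $C_{\mfs}^{1-\varepsilon}\hookrightarrow C_{\mfs}^{\theta}$ (recall $\theta<1-\varepsilon$). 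For the $K$-part, combine parts (ii) and (iii) of Proposition~\ref{5:proposition:convolutionops} to write
$$
K=-(\partial_1+\Delta)\int_0^1 Q_s\,ds=-(\partial_1+\Delta)\big[(c-\mcL)^{-1}-Q_1(c-\mcL)^{-1}\big];
$$
since $(c-\mcL)^{-1}$ maps $C^{-1-\varepsilon,Q}(\bbR\times\bbT^{2})$ into $C_{\mfs}^{3-\varepsilon}(\bbR\times\bbT^{2})$ and $\partial_1+\Delta$ is second order in the parabolic scaling $\mfs=(2,1,1)$, hence lowers $C_{\mfs}$-regularity by $2$, one obtains $K:C^{-1-\varepsilon,Q}(\bbR\times\bbT^{2})\to C_{\mfs}^{1-\varepsilon}(\bbR\times\bbT^{2})$ (the $Q_1$-term being smoother still); alternatively, this $K$-estimate is the function-level Schauder estimate following directly from the $2$-regularizing kernel axioms of Definition~\ref{asmp2}. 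Adding the two parts gives (II), and then (I)+(II) give that $\{u_n\}$ is Cauchy, hence convergent to $u$, in $C_{\mfs}^{\theta}((0,t)\times\bbT^{2})$, in probability.

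I expect the main obstacle to be step (I): one must keep careful track of which space carries the weight $v_r$ and which carries the flat weight, apply the ``compact-time-support'' modification of the reconstruction estimate in its Lipschitz form over $\bbR\times\bbT^{2}$, and assemble the chain $M^n\mapsto U_n\mapsto b(U_n)\Xi\mapsto E_t(b(U_n)\Xi)\mapsto g_n$ of locally Lipschitz maps with constants uniform on bounded sets, all with the random time horizon $t$ fixed beforehand. Step (II), by contrast, is routine given the mapping properties of $K$ and $S$ already established in Proposition~\ref{5:proposition:convolutionops}.
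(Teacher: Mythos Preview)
Your approach is correct but takes a genuinely different route from the paper's. The paper does not invoke the operator mapping $(\partial_1-a\Delta+c)^{-1}:C^{-1-\varepsilon,Q}\to C_{\mfs}^{\theta}$ at all. Instead it stays entirely within the modelled-distribution framework: writing $\bar U_n:=\mcP_t^{1+2\varepsilon}(b(U_n)\Xi)\in\mcD^{1+2\varepsilon,\theta}((0,2t),\bfS;\Gamma^n)$, it observes that $\bar u_n:=u_n-Pu_0$ is the $\bfT_0$-component of $\bar U_n$, and then reads off the $\theta$-H\"older bound directly from the norms of $\bar U_n$. On the region $\|y-x\|_{\mfs}\le\varphi(x,y)$ this comes from $\|P_{<\theta}\bar U_n\|_{\theta,\theta;(0,t)}$ via Proposition~\ref{def:variantsnorms}-\ref{def:variantsnorms0}; on the complementary region $\|y-x\|_{\mfs}>\varphi(x,y)$ it uses that $\varphi(x)\vee\varphi(y)\lesssim\|y-x\|_{\mfs}$ together with the bound on $\lp\bar U_n\rp^{\circ}_{1+2\varepsilon,\theta;(0,t)}$ from Lemma~\ref{def:variantsnorms2} (applicable because $\bar u_n(x)\to0$ as $x_1\downarrow0$). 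The paper states only the uniform bound and declares the convergence ``a simple modification''.

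Your argument, by contrast, factors through the Schauder-type estimate $(\partial_1-a\Delta+c)^{-1}=K+S:C^{-1-\varepsilon,Q}(\bbR\times\bbT^2)\to C_{\mfs}^{1-\varepsilon}(\bbR\times\bbT^2)$, assembled from Proposition~\ref{5:proposition:convolutionops}. This is more operator-theoretic and proves the Cauchy property directly rather than via uniform bounds. The cost is that you must justify the identity $Kf=-(\partial_1+\Delta)\big[(c-\mcL)^{-1}f-Q_1(c-\mcL)^{-1}f\big]$ for general $f\in C^{-1-\varepsilon,Q}$, not just smooth $f$; this follows by density once you note that both sides are continuous from $C^{-1-\varepsilon,Q}$ into $L^{\infty}$ (the left side by Lemma~\ref{lemma:whyregularize}-\ref{lemma:whyregularize1}), but it is an extra step. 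The paper's argument is shorter and uses nothing beyond what has already been built; yours is arguably more conceptual and transfers more readily to other equations.
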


\begin{proof}
We only show the uniform bounds of $\{u_n\}$ in the $\theta$-H\"older norm, since the proof of the convergence is a simple modification. First, we set $\bar{U}_n=\mcP_t^{1+2\varepsilon}\big(b(U_n)\Xi\big)\in \mcD^{1+2\varepsilon,\theta}((0,2t),\bfS;\Gamma^n)$ and decompose 
$$
u_n=Pu_0+\bar{u}_n,\qquad\bar{u}_n:=\mcR^n(E_t\bar{U}_n).
$$
Since the uniform bounds of $\{Pu_0\}$ in the $\theta$-H\"older norm is more elementary (see e.g. \cite[Proposition 62]{BHK22}), we focus on the remaining term. By definition, for any $x\in(0,t)\times\bbT^2$, $\bar{u}_n(x)$ coincides with the ${\bf1}$-component of $\bar{U}_n(x)$, and also with that of $P_{<\theta}\bar{U}_n(x)$. Since $\{P_{<\theta}\bar{U}_n\}$ is uniformly bounded in the norm $\|\cdot\|_{\theta,\theta;(0,t)}$ by Proposition \ref{def:variantsnorms}-\ref{def:variantsnorms0}, we have
$$
|\bar{u}_n(y)-\bar{u}_n(x)|\lesssim\|y-x\|_\mfs^\theta
$$
for any $x,y\in(0,t)\times\bbT^2$ such that $\|y-x\|_\mfs\le\varphi(x,y)$. 
Here and in what follows, we omit proportional constants polynomially depending on the norms of $\{\bar{U}_n\}$ and $\{\Gamma^n\}$, which are uniform over $n$.
It remains to show the same H\"older-type inequality in the region $\varphi(x,y)<\|y-x\|_\mfs$. In this region, by using the inequality \eqref{ineq:omegashift}, we have $\varphi(x)\vee\varphi(y)\lesssim\|y-x\|_\mfs$.
On the other hand, we also have that $\{\bar{U}_n\}$ is uniformly bounded in the norm $\lp\cdot\rp_{1+2\varepsilon,\theta;(0,t)}^\circ$ by Lemma \ref{def:variantsnorms2}. Hence
\begin{align*}
|\bar{u}_n(y)-\bar{u}_n(x)|&\le|\bar{u}_n(y)|+|\bar{u}_n(x)|
\lesssim\varphi(y)^\theta+\varphi(x)^\theta\lesssim\|y-x\|_\mfs^\theta
\end{align*}
in the region $\varphi(x,y)<\|y-x\|_\mfs$. This completes the proof.
\end{proof}

\vspace{7mm}
\noindent
{\bf Acknowledgements.}
The first author is supported by JSPS KAKENHI Grant Number 23K12987.
The second author is supported by JST Grant Number JPMJSP2138.

\end{document}